\documentclass[reqno]{amsart}
\usepackage[foot]{amsaddr}

\addtolength{\textwidth}{3 truecm}
\addtolength{\textheight}{1 truecm}
\setlength{\voffset}{-.6 truecm}
\setlength{\hoffset}{-1.3 truecm}

\usepackage{graphicx}
\usepackage{subfigure}
\usepackage{amsmath}
\usepackage{bm}
\usepackage{tikzit}
\usepackage{bbm}
\usetikzlibrary{math}
\usepackage{url}
\usepackage{amssymb}

\newtheorem{theorem}{Theorem}[section]
\newtheorem{remark}[theorem]{Remark}
\newtheorem{lemma}[theorem]{Lemma}

\newtheorem{proposition}[theorem]{Proposition}

\newtheorem{example}[theorem]{Example}

\newtheorem{corollary}[theorem]{Corollary}

\newtheorem{definition}[theorem]{Definition}

\DeclareMathOperator*{\Prob}{\mathbb{P}}
\newcommand{\prb}[1]{\mathbb{P}\left[ #1 \right]}

\newcommand{\compnorm}{\mathcal{N}_\mathcal{C}(0, 1)}
\newcommand{\compnormvec}{\mathcal{N}_\mathcal{C}(0, I_n)}

\title{Estimating the Numerical Range with a Krylov Subspace}
\author{Cecilia Chen}
\address{Department of Mathematics, Massachusetts Institute of Technology, \newline \indent Cambridge, MA, 02139 USA.}
\email{cdchen@mit.edu}
\author{John Urschel}
\email{urschel@mit.edu}
\subjclass[2020]{15A60,65F15,65F50}
\keywords{Arnoldi iteration, field of values, Krylov subspace, numerical range.}
\begin{document}

\begin{abstract}
Krylov subspace methods are a powerful tool for efficiently solving high-dimensional linear algebra problems. In this work, we study the approximation quality that a Krylov subspace provides for estimating the numerical range of a matrix. In contrast to prior results, which often depend on the gaps between eigenvalues, our estimates depend only on the dimensions of the matrix and Krylov subspace, and the conditioning of the eigenbasis of the matrix. In addition, we provide nearly matching lower bounds for our estimates, illustrating the tightness of our arguments.
\end{abstract}

\maketitle

\section{Introduction}

Moment-based methods are ubiquitous in applied mathematics, and numerical linear algebra is no exception. Krylov subspace methods are an incredibly popular family of algorithms that approximate the solution to some problem involving a matrix $A$ using the Krylov subspace
\[\mathcal{K}_m(A, \bm b) = \mathrm{span}\{ \bm b, A \bm b, A^2 \bm b,\ldots ,A^{m-1} \bm b\}.\]
This includes methods for approximating linear systems (conjugate gradient method \cite{hestenes1952methods}, GMRES \cite{saad1986gmres}, MINRES \cite{paige1975solution}, hybrid methods \cite{hybridarnoldi1993} etc.), extremal eigenvalue problems (Arnoldi iteration \cite{arnoldi1951principle}, Lanczos method \cite{lanczos1950iteration}, etc.), pseudospectra \cite{pseudo}, and matrix functions (e.g., estimating $f(A) \bm{b}$ or $\bm{b}^* f(A) \bm{b}$). In many applications, $m$ is much smaller than $n$, and a key benefit of these methods is that they only involve matrix-vector products, allowing for fast computation (sometimes without even forming $A$ explicitly). Here we focus on the quality of the estimate on the numerical range 
\[ W(A) = \left\{  \frac{\bm x^* A \bm x}{\bm x^* \bm x} \, \bigg| \, \bm x \in \mathbb{C}^{n \times n} \right\} \]
of a matrix $A$ produced using the numerical range of a projection onto a Krylov subspace. In particular, we consider the orthogonal projection of $A$ onto the Krylov subspace $\mathcal{K}_m(A, \bm b)$ for some vector $\bm b$, denoted by $H_m$. Such estimates are important not only in the computation of extremal eigenvalues, but also for error estimates of other methods. For instance, standard error bounds for the residual in the GMRES algorithm for $A \bm{x} = \bm{b}$ after $m$ steps depend on the quantity $\min_{p \in \mathcal{P}_m \, \mathrm{s.t.}\,p(0)=1} \max_{\lambda \in \Lambda(A)} |p(\lambda)|$, where $\mathcal{P}_m$ is the set of complex polynomials of degree at most $m$ and $\Lambda(A)$ is the spectrum of $A$ \cite[Proposition 6.32]{saad2003iterative}. If $W(H_m)$ provides a good estimate of $W(A)$, or even $\mathrm{conv}(\Lambda(A))$, then computing $W(H_m)$ and estimating the separation from zero can produce guarantees for the convergence rate. For more details about Krylov subspace methods in general, we refer the reader to \cite{saad2003iterative,saad2011numerical} (see also \cite{saad2022origin} for a historical perspective). Here, we consider only the mathematical properties of a Krylov subspace and neglect matters of implementation and rounding errors in floating point arithmetic (for a discussion of these aspects, see \cite{greenbaum1997iterative,liesen2013krylov,meurant2006lanczos,meurant2006lanczos2}).

Typically, estimates for approximating an extreme eigenvalue $\lambda$ with a Krylov subspace $\mathcal{K}_{m+1}(A, \bm b)$ depend on the quality of the initial guess $\bm b$ in relation to the eigenspace of $\lambda$, the eigenvector condition number, and the quantity $\min_{p \in \mathcal{P}_m \, \mathrm{s.t.}\,p(\lambda)=1} \max_{\mu \in \Lambda(A)\backslash \lambda} |p(\mu)|$. The latter quantity is small when there is separation between $\lambda$ and $\Lambda(A) \backslash \lambda$ in the complex plane, but can become arbitrarily close to one in many cases, producing unnecessarily pessimistic bounds. These issues can persist even when $A$ is Hermitian. For example, consider the tridiagonal matrix $A \in \mathbb{R}^{n \times n}$ resulting from the discretization of the Laplacian operator on an interval with Dirichlet boundary conditions (i.e., $A_{ii} = 2$, $A_{i,i+1} = A_{i+1,i} = -1$). This matrix has no small eigenvalue gaps, and so standard gap-dependent error estimates (e.g., \cite[Theorem 6.4]{saad2003iterative}) significantly over estimate the error in approximation for extreme eigenvalues, producing error bounds greater than one even when the Krylov subspace dimension equals $n$ (see \cite[Example 1.1]{urschel2021uniform} for details).

In practice, Krylov subspace methods perform well at estimating extreme eigenvalues, even when eigenvalue gaps are small. In \cite{musco2015randomized}, Musco and Musco argue that while a good estimate of an eigenvector naturally implies a good estimate of its corresponding eigenvalue, approximating the eigenvector may be unnecessary. Consider two close eigenvalues $\lambda$ and $\lambda'$ with corresponding eigenvectors $\bm{\varphi},\bm{\varphi}'$. It would be difficult to distinguish the components of $\bm{\varphi}$ and $\bm{\varphi}'$, but, for eigenvalues, an estimate near $\lambda$ is clearly near $\lambda'$ as well. Kuczynski and Wozniakowski were arguably the first to fully recognize this phenomenon in a quantitative way, producing a probabilistic upper bound of order $\ln^2 n /m^2$ on the quantity $(\lambda_{\max} (A) - \lambda^{(m)}_{\max})/(\lambda_{\max}(A) - \lambda_{\min}(A))$ for a real symmetric matrix $A \in \mathbb{R}^{n \times n}$, where $\lambda^{(m)}_{\max}$ is the largest eigenvalue of $H_m$, the orthogonal projection of $A$ onto $\mathcal{K}_{m}(A,\bm b)$, for an initial guess $\bm b$ sampled uniformly from the hypersphere \cite{kuczynski1992estimating}. This concept was later extended to the block setting. Musco and Musco introduced a block Krylov subspace method (a variant of the traditional block Lanczos method) that produces an $\epsilon$ approximation to the best rank $k$ approximation of $A$ using block size $k$ and order $\ln n / \sqrt{\epsilon}$ iterations (i.e., $\epsilon \approx \ln^2 n / m^2$, where $m$ is the number of iterations) \cite{musco2015randomized}. The original upper bound of order $\ln^2 n /m^2$ of Kuczynski and Wozniakowski has been shown to be essentially tight in a variety of settings, see \cite{garza2020lanczos,simchowitz2018tight,urschel2021uniform}. 

\begin{figure}
\begin{center}
\subfigure[$\Lambda(A)$, $\partial W(H_m)$, and $\Lambda(H_m)$ for normal $A$ with $\Lambda(A) = \{e^{2 \pi ij/n}\}_{j=1}^{n}$, $n = 10^4$, $\bm{b} \sim \mathrm{Unif}(\mathbb{S}^{n-1})$, and $m = 30$]{\includegraphics[height = 2.2in]{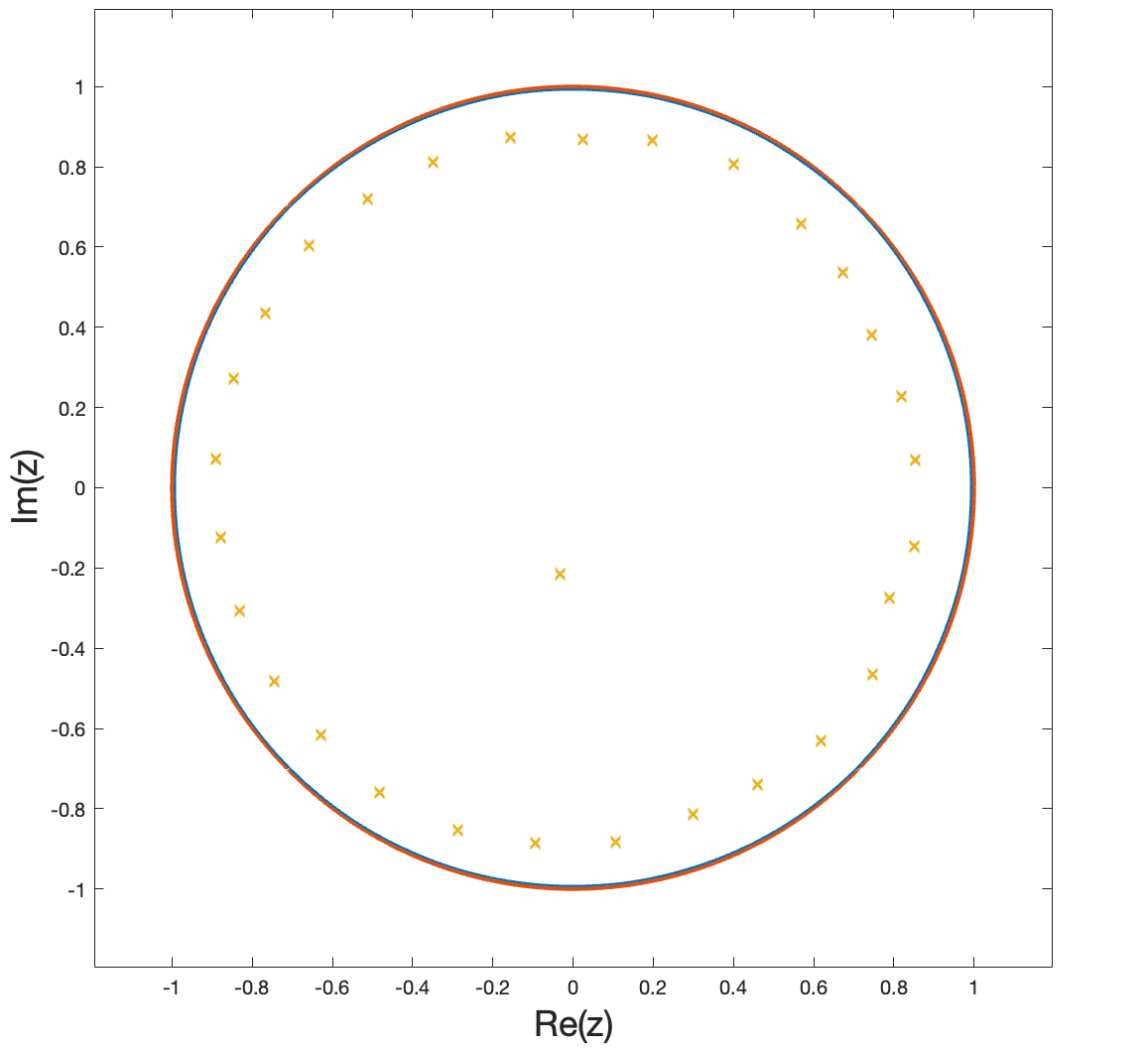}} \qquad
\subfigure[$d_H(\mathrm{conv}(\Lambda(A)),\mathrm{conv}(\Lambda(H_m))) \times m$ compared to $d_H(\mathrm{conv}(\Lambda(A)),W(H_m)) \times m^2$ for $m = 1,\ldots,30$]{\includegraphics[height = 2.2in]{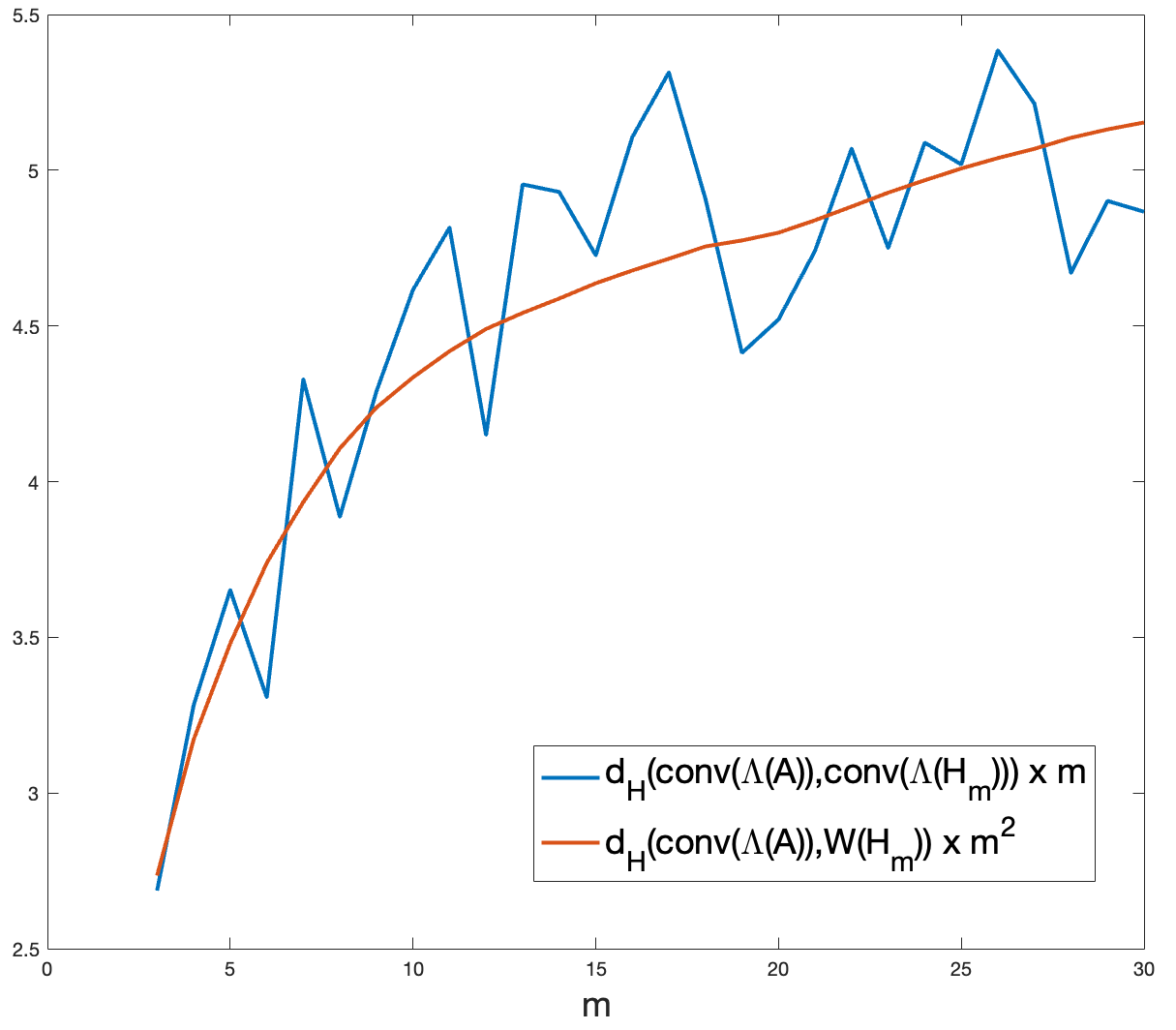}}
\caption{Estimating the eigenvalues of $A = \mathrm{diag}(\{e^{2 \pi i j/n}\}_{j=1}^n)$ using $\mathcal{K}_m(A,\bm b)$ for $\bm{b} \sim \mathrm{Unif}(\mathbb{S}^{n-1})$, where $n = 10^4$ and $m = 30$. In Subfigure (a), note that, while there is a decent gap between $\Lambda(A)$ and $\Lambda(H_m)$, the difference between $\Lambda(A)$ and $\partial W(H_m)$ is imperceptible. This is further quantified in Subfigure (b), where we note that the Hausdorff distance between $\mathrm{conv}(\Lambda(A))$ and $\mathrm{conv}(\Lambda(H_m))$ scales roughly like $1/m$, while the Hausdorff distance between $\mathrm{conv}(\Lambda(A))$ and $W(H_m)$ scales like $1/m^2$.}
\label{fig:ritzvsrange}
\end{center}
\end{figure}

However, to date, much less is known for the general, non-Hermitian setting. For instance, no such gap-independent upper or lower bounds have been produced for the extreme eigenvalues of non-Hermitian matrices. The situation here is more subtle than the Hermitian case. For instance, even if $A$ is normal, $H_m$ is usually non-normal (see \cite[Theorem 1]{huckle1994arnoldi}). This provides an additional difficulty for approximating the extreme eigenvalues of $A$ using the eigenvalues of $H_m$ (i.e., the Ritz values). In fact, in some cases, the numerical range $W(H_m)$ provides a far superior estimate to $\Lambda(A)$ than the Ritz values $\Lambda(H_m)$ themselves. See Figure \ref{fig:ritzvsrange} for an illustration of this phenomenon for a normal matrix with eigenvalues located at the $n^{th}$ roots of unity. For this reason, in this work we focus exclusively on the approximation provided by the numerical range $W(H_m)$ rather than the Ritz values $\Lambda(H_m)$. When $A$ is normal and $W(H_m)$ approximates $W(A)$ reasonably well, the profile of $W(H_m)$ inherently encodes information about the location of extreme eigenvalues of $A$, as they are the vertices of the polytope $W(A)$. When $A$ is non-normal, this precise recognition of extreme eigenvalues using $W(H_m)$ is no longer possible, yet still something can be said about the possible location of an eigenvalue $\lambda \in \Lambda(A)$ by bounding the distance between either $\mathrm{conv}(\Lambda(A))$ and $W(H_m)$ or $W(A)$ and $W(H_m)$.

\subsection{Our Contributions} Here we prove a number new results regarding the approximation quality of $W(H_m)$ to $W(A)$, independent of the distribution of $\Lambda(A)$, i.e., gap-free/uniform bounds. These are the first estimates of their type in the non-Hermitian setting. We break our contributions into three cases (all of the below results hold with high probability):
\begin{enumerate}
\item {\bf Normal Matrix $A$.} We prove that the Hausdorff distance between the numerical ranges of $A$ and $H_m$ is at most order $\ln n /m$ (Theorem \ref{thm:main}) and prove a lower bound of order $1/m$ for the distance (standard distance, not Hausdorff) between their boundaries for some family of matrices (Theorem \ref{thm:norm_lower}).
\item {\bf Normal Matrix $A$ with $\Lambda(A) \subset \mathbb{S}^0$.} When the spectrum of $A$ lies on the unit circle, we can further strengthen our estimates. In particular, we prove that the Hausdorff distance between the numerical ranges of $A$ and $H_m$ is at most order $\ln^2 n /m^2$ (Theorem \ref{thm:circle_upper}) and prove a lower bound of order $1/m^2$ for the distance between their boundaries for some family of matrices (Theorem \ref{thm:circle_lower}). The choice of $\mathbb{S}^0$ as the containing set is solely due to the popularity of unitary matrices. Similar bounds almost certainly hold for the boundary of any convex body.
\item {\bf Non-Normal Matrix $A$.} When $A$ is non-normal, the above estimates decay rapidly with eigenvector condition number (Example \ref{ex:bad}). We prove that the Hausdorff distance between the numerical ranges of $A$ and $H_m$ is at most order $\kappa^2 \ln n /m + \kappa (\kappa -1)$, where $\kappa$ is the condition number of a matrix $V$ with orthonormal columns that diagonalizes $A$ (Theorem \ref{thm:non-normal}). The decay of this estimate with $\kappa$ is consistent with observed worst-case behavior (see Example \ref{ex:bad}). However, when the matrices $V$ and $(V^{-1})^*$ each have nearly orthonormal columns (roughly, inner products smaller than $n^{-(1/2+\beta/2)}$, see Definition \ref{def:betanormal} for details), we can prove estimates for the convex hull of $\Lambda(A)$, even when $A$ is moderately non-normal. In particular, if $A$ satisfies the aforementioned condition for some $\beta>0$, then the maximum distance between a point $z \in \mathrm{conv}(\Lambda(A))$ and the numerical range of $H_m$ is at most order $\ln n /m$ when $m \ll n^{\beta/2}$ (Theorem \ref{thm:eigenhull}).
\end{enumerate}

The remainder of the paper is as follows. In Section \ref{sec:tech}, we introduce the notation, proof ideas, and technical tools that we use in this work. In Section \ref{sec:normal}, we consider the case when $A$ is normal, and when $A$ is normal with its spectrum on the unit circle $\Lambda(A) \subset \mathbb{S}^0$. In Section \ref{sec:nonnormal}, we treat the case of non-normal matrices.

\section{Technical Tools: Convexity, Polynomials, and Probability}\label{sec:tech}

Here, we detail the basic notation used throughout the paper, outline the proof techniques for our main results, and recall and prove a number of technical results regarding convex geometry, extremal polynomials, and probabilistic inequalities.

\subsection{Notation}\label{sub:notation}

Recall that the Krylov subspace for a matrix $A \in \mathbb{C}^{n \times n}$ and vector $\bm{b} \in \mathbb{C}^n$ of order $m$ is given by $\mathcal{K}_m(A,\bm{b}) = \{ p(A) \bm{b} \, | \, p \in \mathcal{P}_{m-1}\}$, where $\mathcal{P}_{m-1}$ is the set of complex polynomials of degree at most $m-1$. If $\dim(\mathcal{K}_m(A,\bm{b}))=m$, then we may define $m$ orthonormal vectors $\{\bm{q}^{(j)}\}_{j=1}^m$ with the property that
$\mathrm{span}\{\bm{q}^{(1)},\ldots,\bm{q}^{(j)}\} = \mathrm{span}\{\bm{b},A \bm{b},\ldots,A^{j-1} \bm{b}\}$ for $j = 1,\ldots,m$. If this is the case, we define $Q_m = [ \bm q^{(1)} \ldots \bm q^{(m)} ] \in \mathbb{C}^{n \times m}$, and denote by $H_m = Q_m^* A Q_m$ the orthogonal projection of $A$ onto $\mathcal{K}_m(A,\bm{b})$ in the basis of $Q_m$ (note $H_m$ is an upper Hessenberg matrix). For simplicity, if $\dim(\mathcal{K}_m(A,\bm{b}))=k<m$, we define $H_m=H_k$. Also recall that $W(A)= \big\{ \frac{\bm{x}^*A\bm{x}}{\bm{x}^* \bm{x}} \, \big| \, \bm{x} \in \mathbb{C}^n \big\}$ is the numerical range of a matrix. 

Let $\mathbb{S}^{n-1}$ denote the unit complex $n$-sphere in $\mathbb{C}^n$. Let $\compnorm$ be the standard complex normal distribution (i.e., for $z \sim \compnorm$,  $\mathrm{Re}(z),\mathrm{Im}(z) \sim \mathcal{N}(0,\tfrac{1}{2})$ and are independent) and $\compnormvec$ be the distribution of standard complex normal vectors of length $n$. Given that the Krylov subspace $\mathcal{K}_m(A,\bm{b})$ is invariant under scaling $\bm{b}$, we will often move between taking $\bm{b} \sim \mathrm{Unif}(\mathbb{S}^{n-1})$ or $\bm{b} \sim \compnormvec$, depending on which formulation better suits our needs. We say $X \overset{d}{=} Y$ if the random variables $X$ and $Y$ are equal in distribution.

To measure the proximity of two sets $S,T \subset \mathbb{C}$, let \begin{align*}
d(S,T) &= \inf_{\substack{s \in S,\\t \in T}} |s-t|, \\
\tilde{d}_H(S,T) &= \sup_{s \in S} d(s,T), \\
d_H(S,T) &= \max \{ \sup_{s \in S} d(s,T) , \sup_{t \in T} d(S,t) \}
\end{align*} denote the distance, one-sided Hausdorff distance, and Hausdorff distance. Note that, despite the name, $\tilde{d}_H(\cdot,\cdot)$ is not a distance. Furthermore for a set $S\subset \mathbb{C}$, let $\partial S$ be its boundary.

Finally, we recall miscellaneous and more basic notation that we will make use of. Let $T_m(z)$ denote the $m^{th}$ degree Chebyshev polynomial of the first kind, defined by $T_m(\cos \theta) = \cos m \theta$. Let $o_n(f(n))$ denote an arbitrary function that, when divided by $f(n)$, tends to zero as $n \rightarrow \infty$ and other parameters stay fixed. Let $\|\cdot\|_2$ be the vector and matrix $2$-norm, $\|\cdots\|_F$ be the Frobenius norm, and $\kappa(A) = \|A\|_2 \|A^{-1}\|_2$ be the matrix condition number,
and $\kappa_V(A) = \inf_{A = V \Lambda V^{-1}} \kappa(V)$ be the eigenvector condition number of $A$, that is, the smallest condition number over all matrices $V$ that diagonalize $A$. Let $i = \sqrt{-1}$, $\langle \cdot, \cdot\rangle$ be the complex inner product, $\mathrm{diam}(\cdot)$ be the diameter of a set, $\mathrm{conv}(\cdot)$ be the convex hull of a set, $\bm{e}_j$ be the $j^{th}$ standard basis vector, $\otimes$ be the Kronecker product, $\mathrm{diag}(\bm{x})$ be the diagonal matrix with $\bm{x}$ on its diagonal, and $[n] = \{1,\ldots,n\}$.

\subsection{Main Ideas and Eigenvalue Approximation as a Polynomial Problem}
In this work, we are primarily concerned with estimating $d_H\big(W(H_m),W(A)\big)$ for $\bm{b} \sim \mathrm{Unif}(\mathbb{S}^{n-1})$ uniformly sampled from the hypersphere. (See Subsection \ref{sub:convexity} for a pair of useful propositions concerning Hausdorff distances between convex bodies.) 

First, we sketch the rough argument for the upper bounds produced in this work (Theorems \ref{thm:main}, \ref{thm:circle_upper}, and \ref{thm:non-normal}). It is useful to think of the approximation to any particular eigenvalue $\lambda$ as a polynomial problem. Let $A \in \mathbb{C}^{n \times n}$ be a diagonalizable matrix with eigendecomposition $A = V \Lambda V^{-1}$, where the columns of $V$ have norm one, and $\bm{\alpha} = V^{-1} \bm{b}$. Then the distance $d(\lambda, W(H_m))$ equals
\begin{equation}\label{eqn:nonnormal_RQ}
\min_{\bm{x} \in \mathcal{K}_m(A,\bm{b})} \bigg| \lambda - \frac{\bm{x}^* A \bm{x}}{\bm{x}^* \bm{x}} \bigg| = \min_{p \in \mathcal{P}_{m-1}} \bigg| \lambda - \frac{[p(A) \bm{b}]^* A [p(A) \bm{b}]}{[p(A) \bm{b}]^* [p(A) \bm{b}]} \bigg| = \min_{p \in \mathcal{P}_{m-1}} \left| \frac{\bm{\alpha}^* p(\Lambda)^* V^*V(\lambda I - \Lambda) p(\Lambda) \bm{\alpha} }{\bm{\alpha}^* p(\Lambda)^* V^*Vp(\Lambda) \bm{\alpha} } \right|.
\end{equation}
In the special case where $A$ is normal, $V^* V = I$ and this expression can be simplified further
\begin{equation}\label{eqn:normal_RQ}
    \min_{\bm x \in \mathcal{K}_m(A,\bm b)} \bigg| \lambda - \frac{\bm x^* A \bm x}{\bm x^* \bm x} \bigg| = \min_{p \in \mathcal{P}_{m-1}} \left| \frac{\sum_{j=1}^n |\bm{\alpha}_j|^2  |p(\Lambda_j)|^2 (\lambda - \Lambda_j)}{\sum_{j=1}^n |\bm{\alpha}_j|^2  |p(\Lambda_j)|^2} \right|.
\end{equation}
To make this quantity small for some fixed $\bm b$, we construct an extremal polynomial $p \in \mathcal{P}_{m-1}$ in Subsection \ref{sub:poly} that is large at $\lambda$ and very small for all other eigenvalues at least some distance away from $\lambda$. In Subsection \ref{sub:probab}, we state some standard inequalities in high-dimensional probability, which allow us to handle the randomness of $\bm{b}$ and bound $d_H\big(W(H_m),W(A)\big)$ with high probability. In particular, we bound the probability that $\bm{b}$ is too close to orthogonal to any eigenspace. When $A$ is non-normal, the $V^*V$ and $\alpha = V^{-1}\bm b$ terms in Equation \ref{eqn:nonnormal_RQ} can produce barriers to this argument. To produce some estimates in the setting where $A$ is moderately non-normal, we show that, under certain conditions, the $V^*V$ can be removed to match the normal case with small additive and multiplicative error and that the distribution of $V^{-1}\bm b$ is close to as well-behaved as $\bm b$. 

Finally, the main idea behind the lower bounds produced here (Theorems \ref{thm:norm_lower} and \ref{thm:circle_lower}) consists of carefully choosing matrices $A$ for which $H_m$ is a perturbation (with respect to the randomness of $\bm{b}$) of a simple matrix that is easily analyzed, and measuring the distance between the numerical ranges of $A$ and $H_m$ by measuring their distances to this simpler matrix.

\subsection{Convex Geometry in the Plane}\label{sub:convexity} In order to quantify the approximation that $W(H_m)$ provides to either $W(A)$ or $\mathrm{conv}(\Lambda(A))$, we recall and prove two results regarding the approximation of a convex body in the complex plane.

\begin{proposition}[{\cite[Section 4]{polytope},\cite{popov}}]\label{prop:samplepolytope}
    Let $\mathcal{U}$ be a convex body in $\mathbb{C}$ with boundary length $L$. There exist $n$ points $S$ on the boundary of $\mathcal{U}$ such that the Hausdorff distance $$d_H(\mathcal{U}, \mathrm{conv}(S)) \leq \frac{L}{2n}\tan \frac{\pi}{n}.$$
\end{proposition}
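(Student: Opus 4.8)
The plan is to take $S$ on $\partial\mathcal{U}$, identify $\mathrm{conv}(S)$ with an inscribed polygon, and control its deficit from $\mathcal{U}$ arc by arc. Since $S\subseteq\partial\mathcal{U}$ and $\mathcal{U}$ is convex we have $\mathrm{conv}(S)\subseteq\mathcal{U}$, so $\sup_{t\in\mathrm{conv}(S)}d(\mathcal{U},t)=0$ and hence $d_H(\mathcal{U},\mathrm{conv}(S))=\tilde{d}_H(\mathcal{U},\mathrm{conv}(S))=\sup_{s\in\mathcal{U}}d(s,\mathrm{conv}(S))=\sup_{s\in\partial\mathcal{U}}d(s,\mathrm{conv}(S))$, the last equality because $d(\cdot,\mathrm{conv}(S))$ is a convex function and so attains its maximum over $\mathcal{U}$ on $\partial\mathcal{U}$. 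Writing the points as $p_1,\dots,p_n$ in cyclic order and $\gamma_i$ for the boundary arc from $p_i$ to $p_{i+1}$, for $s\in\gamma_i$ the chord $[p_i,p_{i+1}]$ lies in $\mathrm{conv}(S)$, so $d(s,\mathrm{conv}(S))\le d(s,[p_i,p_{i+1}])$ and it suffices to bound $\max_i\max_{s\in\gamma_i}d(s,[p_i,p_{i+1}])$.

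The geometric core is a single--arc estimate: if $\gamma$ is a convex arc from $P$ to $Q$ of length $\ell$ and total turning (integrated curvature) $\tau<\pi$, then every point of $\gamma$ lies within $\tfrac{\ell}{2}\tan(\tau/2)$ of the segment $[P,Q]$. Indeed, convexity forces $\gamma$ into the triangle $T$ bounded by $[P,Q]$ and the support lines of $\mathcal{U}$ at $P$ and $Q$, and the base angles $\beta,\beta'$ of $T$ at $P$ and $Q$ satisfy $\beta+\beta'=\tau$. The height of $T$ above $[P,Q]$ equals $|PQ|\sin\beta\sin\beta'/\sin\tau$; maximizing over $\beta+\beta'=\tau$ (the symmetric choice $\beta=\beta'=\tau/2$ is extremal), using $|PQ|\le\ell$, and simplifying via $\sin^2(\tau/2)/\sin\tau=\tfrac12\tan(\tau/2)$ gives the claim. (A little care is needed to check that for $s$ on the arc the nearest point of the chord lies in the closed segment rather than its extension; a couple of routine case checks handle this.)

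It remains to choose $p_1,\dots,p_n$ so that $\max_i \ell_i\tan(\tau_i/2)\le M:=\tfrac{L}{n}\tan(\pi/n)$, where $\ell_i,\tau_i$ are the length and turning of $\gamma_i$, with $\sum_i\ell_i=L$ and $\sum_i\tau_i=2\pi$. I would select the points greedily, walking once around $\partial\mathcal{U}$ and extending each arc as far as possible subject to $\ell\tan(\tau/2)\le M$. Since $\ell$ and $\tan(\tau/2)$ both increase along an arc, every completed arc is \emph{tight}, $\ell_i\tan(\tau_i/2)=M$, forcing $\tau_i<\pi$ and $\ell_i=M\cot(\tau_i/2)$; summing gives $n\cot(\pi/n)=L/M=\sum_i\cot(\tau_i/2)$. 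As $\tau\mapsto\cot(\tau/2)$ is convex on $(0,\pi)$, Jensen yields $\sum_i\cot(\tau_i/2)\ge k\cot(\pi/k)$ when $k$ arcs are produced, so $n\cot(\pi/n)\ge k\cot(\pi/k)$, and since $x\mapsto x\cot(\pi/x)$ is increasing this forces $k\le n$. The single--arc estimate then gives $d_H(\mathcal{U},\mathrm{conv}(S))\le\tfrac12\max_i\ell_i\tan(\tau_i/2)=\tfrac{M}{2}=\tfrac{L}{2n}\tan(\pi/n)$.

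The main obstacle is the one arc of the greedy partition that simply exhausts the remaining boundary rather than being stopped by the constraint: if its turning reaches or exceeds $\pi$ (possible when $\partial\mathcal{U}$ has a sharp near--corner) the estimate $\tfrac{\ell}{2}\tan(\tau/2)$ degenerates and the Jensen step fails. I would absorb this either by adding the cap $\tau\le\pi/2$ to the greedy step and accounting separately for arcs that terminate at the cap (these have $\tau_i=\pi/2$ and $\ell_i\le M$), or by splitting that last arc and re-running the count; in the degenerate case where $\mathcal{U}$ is itself a polygon one simply takes $S$ to be its vertex set, giving distance $0$. Making this case analysis airtight while preserving the exact constant $\tfrac{L}{2n}\tan(\pi/n)$ is the delicate point, and is where I would lean on the argument of \cite{popov}.
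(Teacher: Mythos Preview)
The paper does not supply its own proof of this proposition; it is quoted directly from the literature (\cite[Section~4]{polytope} and \cite{popov}) and used as a black box in the proof of Theorem~\ref{thm:eigenhull}. So there is no in-paper argument to compare against.

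On its own merits your sketch follows the classical line: reduce to a per-arc bound via the triangle formed by the chord and the two support lines, then distribute arcs so that $\ell_i\tan(\tau_i/2)$ is controlled. The single-arc estimate and the Jensen/monotonicity count are correct as stated for arcs with $\tau_i<\pi$. You are also right that the genuine difficulty is the last (non-tight) arc of the greedy walk: as written, your Jensen step uses $\ell_i\tan(\tau_i/2)=M$ for \emph{all} $k$ arcs, which fails for that residual arc, and if its turning reaches $\pi$ the height bound itself blows up. Your proposed patches (capping $\tau\le\pi/2$ in the greedy rule, or splitting the last arc) are the standard remedies, but you have not actually executed the bookkeeping showing that the count still closes at exactly $n$ with the sharp constant $\tfrac{L}{2n}\tan(\pi/n)$; that step is where the work lies, and deferring it to \cite{popov} means the proposal is a credible outline rather than a complete proof. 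A minor point: your claim that the foot of the perpendicular from any arc point lands on the closed chord holds when both base angles are at most $\pi/2$, which is automatic under a $\tau\le\pi/2$ cap but needs a word otherwise.
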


\begin{proposition}\label{prop:polytopevertices}
    Let $\mathcal{U} \subset \mathbb{C}$ be a convex polytope with vertices $u_1,\ldots,u_n \in \mathbb{C}$, and $\mathcal{V} \subset \mathbb{C}$ be 
    a convex body with $d(u_j,\mathcal{V}) \le \delta$ for all $j \in [n]$. Then $\tilde d_H(\mathcal{U},\mathcal{V}) \le \delta$. Furthermore, if $\mathcal{V} \subset \mathcal{U}$, then $d_H(\mathcal{U}, \mathcal{V}) \leq \delta$.
\end{proposition}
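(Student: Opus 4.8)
We need to show: if $\mathcal{U}$ is a convex polytope with vertices $u_1, \ldots, u_n$, and $\mathcal{V}$ is a convex body with $d(u_j, \mathcal{V}) \le \delta$ for all $j$, then $\tilde d_H(\mathcal{U}, \mathcal{V}) \le \delta$, i.e., $\sup_{u \in \mathcal{U}} d(u, \mathcal{V}) \le \delta$. And if furthermore $\mathcal{V} \subset \mathcal{U}$, then $d_H(\mathcal{U}, \mathcal{V}) \le \delta$.

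For the first part: The key is that $d(\cdot, \mathcal{V})$ is a convex function on $\mathbb{C}$ when $\mathcal{V}$ is convex. Indeed, the distance function to a convex set is convex (standard fact — it's the infimal convolution of the norm with the indicator of $\mathcal{V}$, or: for $x = \lambda x_1 + (1-\lambda) x_2$ and nearest points $v_1, v_2$, the point $\lambda v_1 + (1-\lambda) v_2 \in \mathcal{V}$ witnesses $d(x,\mathcal{V}) \le \lambda |x_1 - v_1| + (1-\lambda)|x_2 - v_2|$). Since $\mathcal{U} = \mathrm{conv}(\{u_1,\ldots,u_n\})$, any $u \in \mathcal{U}$ is a convex combination $u = \sum_j \theta_j u_j$, so convexity gives $d(u, \mathcal{V}) \le \sum_j \theta_j \, d(u_j, \mathcal{V}) \le \sum_j \theta_j \delta = \delta$. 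Taking the supremum over $u \in \mathcal{U}$ yields $\tilde d_H(\mathcal{U}, \mathcal{V}) \le \delta$.

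For the second part: assuming $\mathcal{V} \subset \mathcal{U}$, we have $\sup_{v \in \mathcal{V}} d(\mathcal{U}, v) = 0$ trivially since each $v \in \mathcal{V} \subset \mathcal{U}$ has $d(\mathcal{U}, v) = 0$. Hence $d_H(\mathcal{U}, \mathcal{V}) = \max\{\tilde d_H(\mathcal{U}, \mathcal{V}), \sup_{v \in \mathcal{V}} d(\mathcal{U}, v)\} = \tilde d_H(\mathcal{U}, \mathcal{V}) \le \delta$. There is no real obstacle here; the only point requiring a moment's care is justifying convexity of the distance-to-a-convex-set function, which I would state as a one-line lemma or inline observation rather than belabor. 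I should also note the degenerate cases (e.g., $\mathcal{V}$ a single point or segment) are covered since "convex body" here just needs convexity and closedness for the infimum defining $d$ to behave well, and the convex-combination argument does not use nonempty interior.
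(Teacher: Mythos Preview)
Your proof is correct and essentially identical to the paper's: both take a convex combination $u = \sum_j t_j u_j$, pick near-points $v_j \in \mathcal{V}$, and use that $\sum_j t_j v_j \in \mathcal{V}$ together with the triangle inequality to bound $d(u,\mathcal{V})$ by $\delta$. The only cosmetic difference is that you package this as ``the distance to a convex set is a convex function,'' whereas the paper carries out the same construction inline.
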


\begin{proof}
    Take any point $u \in \mathcal{U}$. Then $u = \sum_{j=1}^n t_j u_j$ for some $\sum_{i=1}^n t_j = 1$ and $t_j \geq 0$ for all $j$. For each $u_j$, let $v_j \in \mathcal{V}$ be such that $d(u_j,v_j) \le \delta$, and set $v = \sum_{j=1}^n t_j v_j = u + \sum_{j=1}^n t_j(v_j - u_j)$. Then $|u-v|= \big|\sum_{j=1}^n t_j(v_i - u_i) \big| \leq \delta$ by the triangle inequality. Hence, $\tilde d_H(\mathcal{U},\mathcal{V})\le \delta$. If, in addition $\mathcal{V} \subset \mathcal{U}$, then $\tilde d_H(\mathcal{V},\mathcal{U})=0$, implying that $d_H(\mathcal{U},\mathcal{V})\le \delta$.
\end{proof}

\subsection{Extremal Polynomials}\label{sub:poly} Chebyshev polynomials $T_m(z)$ often play a key role in the analysis of Krylov subspaces for Hermitian matrices. One key property is that $T_m(x)$ achieves the so-called Remez inequality for intervals -- an upper bound on the infinity norm of a polynomial that is bounded by one on some subset of fixed measure (see \cite{remez1936propriete} for details). In our analysis, we  require polynomials with similar extremal properties for segments of the circle and the half disk in the complex plane. For the unit circle, we give the following result.

\begin{proposition}\label{prop:circle_poly}
Let $m \in \mathbb{N}$, $-1\le c_1 < c_2 \le 1$, $0\le \delta<1$, and 
\[\widehat P_{m,c_1,c_2,\delta}(z) = z^m \, T_m\left( \frac{z+z^{-1} -2 c_1}{(1-\delta)(c_2-c_1)} -1\right).\] Then $\widehat P_{m,c_1,c_2,\delta} \in \mathcal{P}_{2m}$ and
\[ \big|\widehat P_{m,c_1,c_2,\delta}(e^{\arccos(c_2) i}) \big| \ge \frac{1}{2} \exp\big(2 m \sqrt{\delta}\big) \max_{ \cos \theta \in [c_1,c_2 - \delta(c_2-c_1)]} \big|\widehat P_{m,c_1,c_2,\delta}(e^{\theta i})\big|. \]
\end{proposition}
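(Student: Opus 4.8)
The plan is to reduce the claimed bound to the classical behavior of the Chebyshev polynomial $T_m$ just outside the interval $[-1,1]$. Write $\phi(z) = \frac{z+z^{-1}-2c_1}{(1-\delta)(c_2-c_1)} - 1$, so that $\widehat P_{m,c_1,c_2,\delta}(z) = z^m T_m(\phi(z))$. On the unit circle, $z = e^{\theta i}$ gives $z + z^{-1} = 2\cos\theta$ and $|z^m| = 1$, so $|\widehat P_{m,c_1,c_2,\delta}(e^{\theta i})| = |T_m(\psi(\cos\theta))|$ where $\psi(x) = \frac{2x - 2c_1}{(1-\delta)(c_2-c_1)} - 1$ is an increasing affine map of the real variable $x = \cos\theta$. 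The key observations are: (i) $\psi$ maps the interval $[c_1,\, c_2-\delta(c_2-c_1)]$ onto $[-1,1]$, so on that set $|T_m(\psi(x))| \le 1$; and (ii) $\psi(c_2) = \frac{2(c_2-c_1)}{(1-\delta)(c_2-c_1)} - 1 = \frac{2}{1-\delta} - 1 = \frac{1+\delta}{1-\delta} =: 1 + \eta$, a point strictly to the right of $1$. Thus the proposition amounts to the estimate $|T_m(1+\eta)| \ge \tfrac12 \exp(2m\sqrt{\delta})$ with $\eta = \frac{2\delta}{1-\delta}$, together with the trivial bound that the right-hand maximum is at most $1$.

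The second step is to bound $T_m$ from below outside $[-1,1]$. Using $T_m(x) = \cosh(m\,\mathrm{arccosh}\,x) \ge \tfrac12 e^{m\,\mathrm{arccosh}\,x}$ for $x \ge 1$, and $\mathrm{arccosh}(1+\eta) = \ln\!\big(1+\eta+\sqrt{\eta^2+2\eta}\big) \ge \sqrt{2\eta}\cdot(1 - O(\eta))$ — more simply, $\mathrm{arccosh}(1+\eta) \ge \sqrt{2\eta}$ fails slightly but $\mathrm{arccosh}(1+\eta)\ge \sqrt{\eta}$ is comfortably true for $\eta \in [0,1]$ — I would pin down the clean inequality $m\,\mathrm{arccosh}(1+\eta) \ge 2m\sqrt{\delta}$ for $\eta = \frac{2\delta}{1-\delta}$ and $\delta \in [0,1)$. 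Indeed $\frac{2\delta}{1-\delta} \ge 2\delta$, so it suffices to check $\mathrm{arccosh}(1+2\delta) \ge 2\sqrt{\delta}$, equivalently (after $\cosh$) $\cosh(2\sqrt\delta) \le 1 + 2\delta$; but $\cosh(2\sqrt\delta) = 1 + 2\delta + \tfrac{2}{3}\delta^2 + \cdots \ge 1+2\delta$, which is the wrong direction, so the exponent constant $2\sqrt\delta$ is not quite attainable from $\eta = 2\delta$ alone and one must genuinely use the larger value $\eta = \frac{2\delta}{1-\delta}$ and the sharper bound $\mathrm{arccosh}(1+\eta) = \ln(1+\eta+\sqrt{2\eta+\eta^2})$. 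Expanding, $1 + \eta + \sqrt{2\eta+\eta^2} \ge 1 + \sqrt{2\eta} \ge e^{\sqrt{2\eta}/(1+\cdots)}$; combined with $\sqrt{2\eta} = \sqrt{4\delta/(1-\delta)} = 2\sqrt{\delta}/\sqrt{1-\delta} \ge 2\sqrt\delta$, a short monotonicity check on $[0,1)$ closes it. This elementary inequality-chasing on $\mathrm{arccosh}$ is the one place requiring care.

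Putting the pieces together: for $z_2 = e^{\arccos(c_2) i}$ we get $|\widehat P_{m,c_1,c_2,\delta}(z_2)| = |T_m(1+\eta)| \ge \tfrac12 e^{m\,\mathrm{arccosh}(1+\eta)} \ge \tfrac12 e^{2m\sqrt\delta}$, while for any $\theta$ with $\cos\theta \in [c_1, c_2-\delta(c_2-c_1)]$ we have $\psi(\cos\theta) \in [-1,1]$ and hence $|\widehat P_{m,c_1,c_2,\delta}(e^{\theta i})| = |T_m(\psi(\cos\theta))| \le 1 \le \tfrac12 e^{2m\sqrt\delta} / (\tfrac12 e^{2m\sqrt\delta}) \cdot \big(\tfrac12 e^{2m\sqrt\delta}\big)$; cleanly, $\max_{\cos\theta \in [c_1, c_2-\delta(c_2-c_1)]} |\widehat P_{m,c_1,c_2,\delta}(e^{\theta i})| \le 1$, so $|\widehat P_{m,c_1,c_2,\delta}(z_2)| \ge \tfrac12 e^{2m\sqrt\delta} \cdot \max(\cdots)$, as claimed. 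The degree statement $\widehat P_{m,c_1,c_2,\delta} \in \mathcal{P}_{2m}$ is immediate since $z^m$ has degree $m$ and $z^m T_m(\phi(z))$ multiplies it by a polynomial of degree $m$ in $z$ (each of the $m$ factors of $T_m(\phi(z))$ is an affine function of $z + z^{-1}$, hence $z^m$ times it is a polynomial of degree $2m$ — more precisely $T_m(\phi(z))$ is a Laurent polynomial with terms from $z^{-m}$ to $z^m$, so $z^m T_m(\phi(z))$ is an honest polynomial of degree $\le 2m$). The main (only) obstacle is the exact constant in the $\mathrm{arccosh}$ lower bound; everything else is a direct change of variables and the standard growth of $T_m$ off $[-1,1]$.
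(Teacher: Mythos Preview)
Your approach is correct and is essentially the same as the paper's: reduce to $|T_m|\le 1$ on $[-1,1]$ for the maximum, and to the growth of $T_m$ at $\psi(c_2)=\tfrac{1+\delta}{1-\delta}$ for the lower bound (the paper simply cites \cite[proof of Lemma 3.9]{urschel2021uniform} for the latter). The inequality you circle around, $\mathrm{arccosh}\!\big(\tfrac{1+\delta}{1-\delta}\big)\ge 2\sqrt{\delta}$, is exactly what is needed and follows cleanly from the identity $\mathrm{arccosh}\!\big(\tfrac{1+\delta}{1-\delta}\big)=2\,\mathrm{arctanh}(\sqrt{\delta})\ge 2\sqrt{\delta}$, so your ``short monotonicity check'' can be replaced by that one line.
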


\begin{proof}
Clearly, $\widehat P_{m,c_1,c_2,\delta} \in \mathcal{P}_{2m}$. To prove the above inequality, it suffices to show that $|\widehat P_{m,c_1,c_2,\delta}(e^{\theta i})| \le 1$ for $\cos \theta \in [c_1,c_2 - \delta(c_2-c_1)]$, and $|\widehat P_{m,c_1,c_2,\delta}(e^{\arccos(c_2) i})| \ge \frac{1}{2} \exp\big(2 m \sqrt{\delta}\big)$. That $|\widehat P_{m,c_1,c_2\delta}(e^{\theta i})| \le 1$ for $\cos \theta \in [c_1,c_2 - \delta(c_2-c_1)]$ follows immediately from the property $|T_m(x)| \le 1$ for all $x \in [-1,1]$. The lower bound for $e^{\arccos(c_2) i}$ follows from \cite[proof of Lemma 3.9]{urschel2021uniform}.
\end{proof}

In addition, for a fixed $\delta>0$ and degree $m$, we need a polynomial that has modulus at most one in the half annulus
\begin{equation}\label{eqn:halfannulus}
D_\delta = \{ z \in \mathbb{C} \, | \, \delta \le |z|\le 1, \mathrm{Re}(z) \le 0 \} 
\end{equation}
and is as large as possible at $z = 0$. A nearly extremal (up to exponential constants) polynomial was constructed by Erdelyi, Li, and Saff for the unit disk. 

\begin{proposition}[{\cite[proof of Theorem 2.6]{erdelyi1994remez}}]\label{prop:remez_disk}
Let $m \in \mathbb{N}$, $0<\epsilon<1$, $Q_{m,\epsilon}(z) = z^{2m} \, T_m \left( \frac{z + z^{-1}}{2 \cos \epsilon} \right)$, and 
        \[R_\epsilon = \{z \in \mathbb{C} \,\big|\, |z| \leq 1, \arg{z} \in [\epsilon, \pi - \epsilon] \cup [\pi + \epsilon, 2 \pi - \epsilon]\} \cup \{z \in \mathbb{C} \,\big|\, |z| \leq 1-\epsilon/8 \}.\] Then $Q_{m,\epsilon} \in \mathcal{P}_{3m}$ and
        \[Q_{m, \epsilon}(1) \geq \exp\left(\frac{m \epsilon}{8}\right) \max_{z \in R_\epsilon} |Q_{m, \epsilon}(z)|.\]
\end{proposition}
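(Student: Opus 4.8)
The plan is to cite the construction of Erdélyi, Li, and Saff essentially verbatim, verifying only the bookkeeping needed to match their statement to ours. First I would check the degree claim: $z \mapsto (z+z^{-1})/(2\cos\epsilon)$ maps $z^{2m}$ composed with $T_m$ into a genuine polynomial, since $T_m$ has degree $m$ and its argument, after clearing the $z^{-1}$ terms, contributes powers of $z$ ranging from $-m$ to $m$; multiplying by $z^{2m}$ shifts these into the range $0$ to $3m$, so $Q_{m,\epsilon} \in \mathcal{P}_{3m}$. Next I would record the boundary values: on the unit circle, writing $z = e^{i\theta}$, we have $(z+z^{-1})/(2\cos\epsilon) = \cos\theta/\cos\epsilon$, which lies in $[-1,1]$ exactly when $|\cos\theta| \le \cos\epsilon$, i.e. when $\arg z \in [\epsilon,\pi-\epsilon]\cup[\pi+\epsilon,2\pi-\epsilon]$; on that arc $|T_m| \le 1$ and $|z^{2m}| = 1$, so $|Q_{m,\epsilon}(z)| \le 1$ there.

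The substantive content is the bound on the region $\{|z| \le 1-\epsilon/8\}$ together with the lower bound at $z=1$, and this is precisely what the proof of \cite[Theorem 2.6]{erdelyi1994remez} establishes. I would therefore structure the argument as: (i) invoke the maximum principle to reduce the estimate of $\max_{z \in R_\epsilon}|Q_{m,\epsilon}(z)|$ on the ``arc'' part of $R_\epsilon$ to the unit-circle bound just derived; (ii) for the disk part $\{|z| \le 1-\epsilon/8\}$, use the explicit form of $Q_{m,\epsilon}$ and the known growth rate of $T_m$ off $[-1,1]$ — namely $|T_m(w)| \le \tfrac12\big(|w|+\sqrt{|w|^2-1}\big)^m$ for $w$ outside $[-1,1]$, combined with $|z^{2m}| \le (1-\epsilon/8)^{2m}$ — to show the product stays bounded by $1$; and (iii) evaluate at $z=1$: there $(z+z^{-1})/(2\cos\epsilon) = 1/\cos\epsilon = \sec\epsilon > 1$, so $Q_{m,\epsilon}(1) = T_m(\sec\epsilon) \ge \tfrac12\big(\sec\epsilon + \sqrt{\sec^2\epsilon - 1}\big)^m = \tfrac12(\sec\epsilon + \tan\epsilon)^m$, and the elementary inequality $\sec\epsilon + \tan\epsilon \ge e^{\epsilon/8}$ for $0 < \epsilon < 1$ (in fact $\ge e^{\epsilon}$, but the weaker constant suffices and gives room for the $z^{2m}$ factor in the exponent) finishes it.

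The main obstacle is bookkeeping rather than mathematics: one must make sure the $(1-\epsilon/8)^{2m}$ decay from the $z^{2m}$ prefactor genuinely dominates the at-most-polynomial-in-$m$ overshoot of $T_m(\sec\epsilon\cdot\tfrac{1+\epsilon/8}{\cdots})$ type quantities on the inner disk, and that the resulting exponential constant is no worse than $e^{m\epsilon/8}$ after accounting for the prefactor at $z=1$ (where $|z^{2m}|=1$, so no loss there). Since all of this is carried out in \cite{erdelyi1994remez}, I would simply state that the claimed inequality follows from the proof of \cite[Theorem 2.6]{erdelyi1994remez} after the substitution of variables above, and move on; a self-contained reproof would add length without adding insight.
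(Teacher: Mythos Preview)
Your proposal is correct and matches the paper's treatment: the paper gives no proof at all for this proposition, simply attributing it to the proof of \cite[Theorem~2.6]{erdelyi1994remez}, which is exactly what you conclude after your (optional) bookkeeping sketch. Your added verification of the degree claim and the arc bound is sound and goes slightly beyond what the paper records, but the substantive step is the same citation.
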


This construction can be modified to $D_\delta$ by applying a simple quadratic transformation.

\begin{proposition}\label{prop:remez}
Let $m \in \mathbb{N}$, $0 \le \delta <1$, $P(z) = (1-\tfrac{\delta}{4}) z^2 + (1-\tfrac{\delta}{8})z + 1$, and
\[\widetilde P_{m,\delta}(z) = P(z)^{2m} \, T_m \left( \frac{P(z) + P(z)^{-1} }{2 \cos \left(\tfrac{2}{3} \delta  \right)}  \right).\]
Then $\widetilde P_{m,\delta} \in \mathcal{P}_{6m}$ and
\begin{equation}\label{ineq:remez}
\widetilde P_{m,\delta}(1) \ge \exp\left(\frac{m \delta}{12}  \right) \max_{z \in D_\delta} |\widetilde P_{m,\delta}(z)|.
\end{equation}
\end{proposition}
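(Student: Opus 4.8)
The plan is to recognise $\widetilde P_{m,\delta}$ as the composition of the Erdelyi--Li--Saff polynomial with the quadratic $P$. Taking $\epsilon=\tfrac{2}{3}\delta$ in Proposition~\ref{prop:remez_disk} and substituting $w=P(z)$ into the formula for $Q_{m,\epsilon}$ gives $\widetilde P_{m,\delta}(z)=Q_{m,\epsilon}(P(z))$ directly from the definitions; since $Q_{m,\epsilon}\in\mathcal{P}_{3m}$ and $\deg P=2$, this already yields $\widetilde P_{m,\delta}\in\mathcal{P}_{6m}$. To obtain \eqref{ineq:remez} it then suffices to prove: (A) $P$ maps $D_\delta$ into $R_\epsilon$, and (B) $\widetilde P_{m,\delta}(1)=Q_{m,\epsilon}(P(1))\ge Q_{m,\epsilon}(1)$. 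Granting these and invoking Proposition~\ref{prop:remez_disk}, one has $\widetilde P_{m,\delta}(1)\ge Q_{m,\epsilon}(1)\ge \exp(m\epsilon/8)\max_{w\in R_\epsilon}|Q_{m,\epsilon}(w)|\ge\exp(m\epsilon/8)\max_{z\in D_\delta}|Q_{m,\epsilon}(P(z))|=\exp(m\delta/12)\max_{z\in D_\delta}|\widetilde P_{m,\delta}(z)|$, which is exactly \eqref{ineq:remez}.

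Statement (B) is short: $P(1)=3-\tfrac{3\delta}{8}>1$ for $\delta\in[0,1)$, and $Q_{m,\epsilon}$ is nondecreasing on $[1,\infty)$ because there it is the product of the nonnegative nondecreasing functions $w\mapsto w^{2m}$ and $w\mapsto T_m\!\big(\tfrac{w+w^{-1}}{2\cos\epsilon}\big)$ — the latter since $w+w^{-1}$ is nondecreasing on $[1,\infty)$ and bounded below there by $2>2\cos\epsilon$, so the argument of $T_m$ stays $\ge 1$, where $T_m$ is nondecreasing. The substance is in (A), which I would split into a modulus bound and an angular bound. For the modulus, $|P|$ is subharmonic and $D_\delta$ lies in the closed left half-disk $U=\{|z|\le 1,\ \mathrm{Re}(z)\le 0\}$, so it is enough to bound $|P|$ on $\partial U$. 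On the semicircle $z=e^{i\theta}$ one computes $|P(e^{i\theta})|^2=\big[(2-\tfrac{\delta}{4})\cos\theta+1-\tfrac{\delta}{8}\big]^2+\big(\tfrac{\delta}{4}\big)^2\sin^2\theta$, a quadratic in $\cos\theta$ that is convex on $[-1,0]$ with endpoint values $(1-\tfrac{\delta}{8})^2$ and $1-\tfrac{\delta}{4}+\tfrac{5\delta^2}{64}$, both $\le 1$; on the imaginary diameter $z=iy$, $|P(iy)|^2$ is a convex function of $y^2$ whose maximum on $[0,1]$ is $1$, attained at $y=0$. Hence $|P|\le 1$ on $D_\delta$.

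For the angular bound, observe that $R_\epsilon\supseteq\{|w|\le 1-\tfrac{\epsilon}{8}\}\cup\{|w|\le 1:\ |\mathrm{Im}(w)|\ge\sin\epsilon\}$ (valid since $\epsilon=\tfrac23\delta<\tfrac{\pi}{2}$: if $|w|\le 1$ and $|\mathrm{Im}(w)|\ge\sin\epsilon$ then $|\sin(\arg w)|\ge\sin\epsilon$). With the modulus bound already available, it therefore suffices to show each $z\in D_\delta$ satisfies $|P(z)|\le 1-\tfrac{\epsilon}{8}$ or $|\mathrm{Im}(P(z))|\ge\sin\epsilon$. The key identity is $\mathrm{Im}(P(z))=\mathrm{Im}(z)\big[2(1-\tfrac{\delta}{4})\mathrm{Re}(z)+1-\tfrac{\delta}{8}\big]$. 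Split $D_\delta$ according to whether $\mathrm{Re}(z)\le-\tfrac{\delta}{8}$ or $\mathrm{Re}(z)\in(-\tfrac{\delta}{8},0]$. In the first case, a maximum-principle argument on the circular cap $\{|z|\le 1,\ \mathrm{Re}(z)\le-\tfrac{\delta}{8}\}$ — reduced once more to explicit convex quadratics on its bounding arc and chord — gives $|P(z)|\le 1-\tfrac{\epsilon}{8}$. In the second case, $|z|\ge\delta$ and $|\mathrm{Re}(z)|\le\tfrac{\delta}{8}$ force $|\mathrm{Im}(z)|\ge\tfrac{\sqrt{63}}{8}\delta$, while the bracket is $\ge 1-\tfrac{3\delta}{8}+\tfrac{\delta^2}{16}\ge\tfrac{11}{16}$; since $\sin\epsilon\le\tfrac23\delta$, the required bound $|\mathrm{Im}(P(z))|\ge\sin\epsilon$ then reduces to the numerical inequality $\tfrac{11\sqrt{63}}{128}\ge\tfrac23$. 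This establishes $P(D_\delta)\subseteq R_\epsilon$ and finishes the proof.

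I expect the angular bound to be the main obstacle. The coefficients $\tfrac{\delta}{4},\tfrac{\delta}{8}$ in $P$, the split point $-\tfrac{\delta}{8}$, and the factor $\tfrac23$ in $\epsilon=\tfrac23\delta$ are all tuned so that the two halves of the split — and the various boundary quadratics invoked along the way — line up for every $\delta\in[0,1)$, with the margins tightest as $\delta\to 1$; so the difficulty lies in carefully tracking the $\delta$-dependence of these estimates rather than in any conceptual step. The modulus bound and (B) are routine.
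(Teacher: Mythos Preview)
Your approach is correct and coincides with the paper's at the structural level: both recognise $\widetilde P_{m,\delta}=Q_{m,\epsilon}\circ P$ with $\epsilon=\tfrac{2}{3}\delta$, note that $Q_{m,\epsilon}\in\mathcal{P}_{3m}$ and $\deg P=2$ give $\widetilde P_{m,\delta}\in\mathcal{P}_{6m}$, and reduce \eqref{ineq:remez} to the containment $P(D_\delta)\subset R_\epsilon$. The paper's reduction invokes $P(0)=1$, which in fact yields the inequality at $z=0$ (this is what is used in Lemma~\ref{lm:single_eig}); your extra monotonicity step (B), showing $Q_{m,\epsilon}(P(1))\ge Q_{m,\epsilon}(1)$, is exactly what is needed to obtain the inequality at $z=1$ as literally stated.

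The only substantive difference is in how the containment is verified. The paper uses the open mapping theorem to restrict to $\partial D_\delta$ and treats its three pieces (outer arc, imaginary segments, inner arc) separately, the last requiring a further split into three sub-cases according to the size of $\delta$ and of $\cos\theta$. Your organisation --- a global modulus bound $|P|\le 1$ on the full left half-disk via subharmonicity, followed by a single split at $\mathrm{Re}(z)=-\tfrac{\delta}{8}$ into a ``cap'' case (where another maximum-principle argument yields $|P|\le 1-\tfrac{\epsilon}{8}$) and a ``thin strip'' case (where the factored identity for $\mathrm{Im}(P(z))$ gives the angular bound directly) --- is a somewhat cleaner partition that avoids the inner-arc casework. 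Both routes reduce to the same kind of elementary convex-quadratic boundary checks; your expectation that the tightest margins occur near $\delta=1$ is consistent with what the paper's computations show as well.
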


Given Proposition \ref{prop:remez_disk}, to prove Proposition \ref{prop:remez}, it suffices to set $\epsilon = \tfrac{2}{3} \delta$ and show that $P(0) = 1$ and $P(D_\delta) \subset R_\epsilon$. This requires some fairly lengthy and un-insightful analysis and casework, and so we defer the proof of Proposition \ref{prop:remez} to the Appendix. See Figure \ref{fig:remez} for an illustration.

In addition, we note that the construction in Proposition \ref{prop:remez} is useful not only for the gap-independent analysis presented in this work, but also for existing gap-dependent estimates as well. For instance, the following is an immediate consequence of Proposition \ref{prop:remez} and \cite[Lemma 6.2]{saad2011numerical}.

\begin{proposition}
Let $A \in \mathbb{C}^{n \times n}$ be a diagonalizable matrix with eigendecomposition $A = V \Lambda V^{-1}$, where the columns of $V$ have norm one. If $\Lambda_j \not \in \mathrm{conv}(\Lambda(A)\backslash \Lambda_j)$  and $d(\Lambda_j,\Lambda(A) \backslash \Lambda_j) \ge \epsilon \, \mathrm{diam}(\Lambda(A))$, then
\[\| (I - P_m) V \bm{e}_j\|_2 \le \frac{\|V^{-1} \bm{b} \|_2^2}{|[V^{-1} \bm{b}]_j|^2} \exp\left(-\frac{(m-1) \epsilon}{72}\right), \]
where $P_m$ is the orthogonal projection onto $\mathcal{K}_{m}(A,\bm{b})$.
\end{proposition}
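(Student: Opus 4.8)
The plan is to combine a standard Krylov‑subspace eigenvector bound with an explicit extremal polynomial obtained from Proposition \ref{prop:remez}. First I would reduce the claim to a scalar polynomial problem. Write $\bm{\alpha}=V^{-1}\bm{b}$ and $\bm{\varphi}_k=V\bm{e}_k$, so $\|\bm{\varphi}_k\|_2=1$. For any $p\in\mathcal{P}_{m-1}$ with $p(\Lambda_j)\neq 0$ the vector $\tfrac{1}{\bm{\alpha}_j p(\Lambda_j)}p(A)\bm{b}\in\mathcal{K}_m(A,\bm{b})$ equals $\bm{\varphi}_j+\sum_{k\neq j}\tfrac{\bm{\alpha}_k p(\Lambda_k)}{\bm{\alpha}_j p(\Lambda_j)}\bm{\varphi}_k$, whence $\|(I-P_m)\bm{\varphi}_j\|_2\le \tfrac{1}{|\bm{\alpha}_j|}\big\|\sum_{k\neq j}\bm{\alpha}_k p(\Lambda_k)\bm{\varphi}_k\big\|_2$. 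This is exactly the mechanism of \cite[Lemma 6.2]{saad2011numerical}, which, using that the columns of $V$ have unit norm, packages it as
\[
\|(I-P_m)V\bm{e}_j\|_2\ \le\ \frac{\|V^{-1}\bm{b}\|_2^{2}}{|[V^{-1}\bm{b}]_j|^{2}}\ \min_{\substack{p\in\mathcal{P}_{m-1}\\ p(\Lambda_j)=1}}\ \max_{\mu\in\Lambda(A)\setminus\{\Lambda_j\}}|p(\mu)|.
\]
So it suffices to exhibit one $p\in\mathcal{P}_{m-1}$ with $p(\Lambda_j)=1$ and $|p(\mu)|\le\exp(-(m-1)\epsilon/72)$ for every other eigenvalue $\mu$.

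For that I would invoke Proposition \ref{prop:remez}. Since $\Lambda_j\notin\mathrm{conv}(\Lambda(A)\setminus\{\Lambda_j\})$, strict separation of a point from a polytope gives a line through $\Lambda_j$ having all of $\Lambda(A)\setminus\{\Lambda_j\}$ in one of its open half‑planes; moreover $|\mu-\Lambda_j|\in\big[\epsilon\,\mathrm{diam}(\Lambda(A)),\,\mathrm{diam}(\Lambda(A))\big]$ for every such $\mu$ — the upper bound because $\mu,\Lambda_j\in\Lambda(A)$, the lower bound by hypothesis. Hence the similarity $T(z)=a(z-\Lambda_j)$ with $|a|=\mathrm{diam}(\Lambda(A))^{-1}$ and $\arg a$ chosen to rotate that half‑plane onto $\{z:\mathrm{Re}(z)\le 0\}$ satisfies $T(\Lambda_j)=0$ and $T(\Lambda(A)\setminus\{\Lambda_j\})\subseteq D_\epsilon=\{z:\epsilon\le|z|\le 1,\ \mathrm{Re}(z)\le 0\}$. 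Set $M=\lfloor (m-1)/6\rfloor$ and $p(z)=\widetilde P_{M,\epsilon}\big(T(z)\big)/\widetilde P_{M,\epsilon}(0)$; then $p\in\mathcal{P}_{6M}\subseteq\mathcal{P}_{m-1}$, $p(\Lambda_j)=1$, and $|p(\mu)|\le \widetilde P_{M,\epsilon}(0)^{-1}\max_{z\in D_\epsilon}|\widetilde P_{M,\epsilon}(z)|$ for every $\mu\in\Lambda(A)\setminus\{\Lambda_j\}$.

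It remains to bound $\widetilde P_{M,\epsilon}(0)^{-1}\max_{D_\epsilon}|\widetilde P_{M,\epsilon}|$, and here I would rerun the computation in the proof of Proposition \ref{prop:remez} at $0$ instead of at $1$. Because $P(0)=1$ one has $\widetilde P_{M,\epsilon}(0)=Q_{M,2\epsilon/3}(1)$, while $P(D_\epsilon)\subseteq R_{2\epsilon/3}$ together with Proposition \ref{prop:remez_disk} gives $\max_{D_\epsilon}|\widetilde P_{M,\epsilon}|\le\max_{R_{2\epsilon/3}}|Q_{M,2\epsilon/3}|\le Q_{M,2\epsilon/3}(1)\exp(-M\epsilon/12)$. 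Thus $\widetilde P_{M,\epsilon}(0)\ge\exp(M\epsilon/12)\max_{D_\epsilon}|\widetilde P_{M,\epsilon}|$ — the very estimate Proposition \ref{prop:remez} records at $z=1$ — so $|p(\mu)|\le\exp(-M\epsilon/12)\le\exp(-(m-1)\epsilon/72)$ (up to the harmless rounding in the choice of $M$). Substituting into the display of the first paragraph completes the argument.

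The one genuinely nonobvious point is the geometric choice: one must send $\Lambda_j$ to the \emph{centre} $0$ of the half‑annulus rather than to the boundary point $1$ used for the numerical‑range theorems, and observe that $\widetilde P_{M,\epsilon}$ is just as large (relative to its maximum over $D_\epsilon$) at $0$ as at $1$. This is what lets $D_\epsilon$ swallow the entire range $[\epsilon\,\mathrm{diam},\,\mathrm{diam}]$ of distances from $\Lambda_j$ for every $\epsilon\in(0,1)$; a similarity taking $\Lambda_j$ to $1$ instead would only be available when $\epsilon$ is bounded away from zero. Once that is in place, the degree and constant bookkeeping ($72=6\cdot 12$) is routine, as is verifying that the half‑plane of the second paragraph can be rotated onto $\{\mathrm{Re}\le 0\}$.
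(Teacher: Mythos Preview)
Your approach is correct and is exactly what the paper intends: it states the proposition as an ``immediate consequence of Proposition~\ref{prop:remez} and \cite[Lemma 6.2]{saad2011numerical}'' and gives no further details, so your reduction via Saad's lemma to a polynomial max and your construction of that polynomial by affinely mapping $\Lambda_j$ to $0$ and $\Lambda(A)\setminus\{\Lambda_j\}$ into $D_\epsilon$ is precisely the intended argument. Your observation that $\widetilde P_{M,\epsilon}$ must be evaluated at $0$ (via $P(0)=1$) rather than at $1$ is spot on---that is how the paper itself uses Inequality~\eqref{ineq:remez} in the proof of Lemma~\ref{lm:single_eig}, and your re-derivation of the bound there is the right fix; the only loose end is the floor in $M=\lfloor(m-1)/6\rfloor$, which costs an additive $O(1)$ in the exponent and is indeed harmless at this level of precision.
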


\begin{figure}
\input{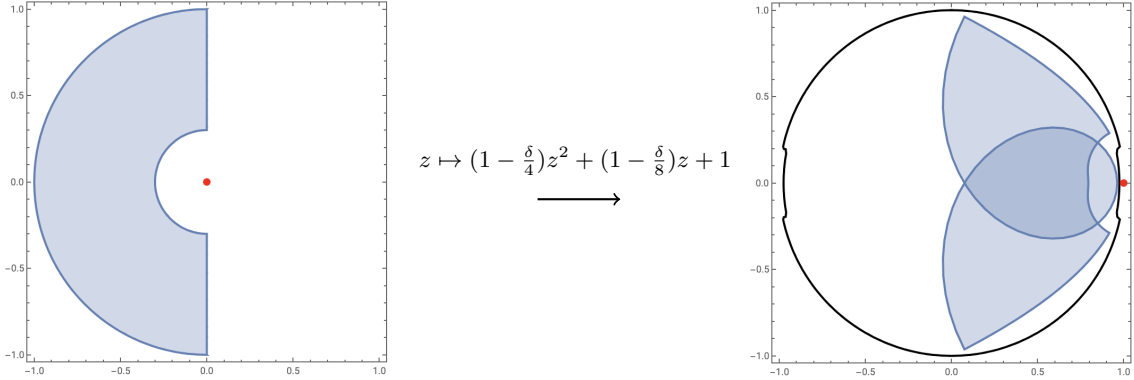}
\caption{The mapping from eigenvalues to inputs for Remez-type polynomials. The desired extremal eigenvalue is shown in red, and the blue region represents all possible eigenvalues at least $\delta$ distance away. The illustrated transformation maps the blue region $D_\delta$ into the outlined region $R_\varepsilon$ from Proposition \ref{prop:remez_disk}
\label{fig:remez}}
\end{figure}

\subsection{Probabilistic Inequalities}\label{sub:probab} To properly analyze the behavior of a Krylov subspace for a random $\bm b \sim \mathrm{Unif}(\mathbb{S}^{n-1})$ on the complex hypersphere, or, equivalently, a complex Gaussian vector $\bm b \sim \compnormvec$, we recall the following fairly standard results.

\begin{proposition}[{\cite[Example 2.11]{wainwright2019high}}]\label{prop:chisq}
Let $X \sim \chi^2_k$ be a chi-squared random variable with $k$ degrees of freedom. Then $\Prob[ |X-k| \ge k t] \le 2 e^{-kt^2/8}$ for all $t \in (0,1)$.
\end{proposition}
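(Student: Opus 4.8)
The plan is to use the standard Chernoff/moment-generating-function method, treating the upper and lower tails separately and combining via a union bound. Write $X \overset{d}{=} \sum_{i=1}^k Z_i^2$ with $Z_1,\ldots,Z_k$ i.i.d.\ standard normal, so that for $\lambda < 1/2$ we have $\mathbb{E}[e^{\lambda X}] = (1-2\lambda)^{-k/2}$ and for any $\lambda > 0$ we have $\mathbb{E}[e^{-\lambda X}] = (1+2\lambda)^{-k/2}$. These are the only structural facts needed; everything else is elementary calculus.

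For the upper tail, Markov's inequality applied to $e^{\lambda X}$ gives, for any $\lambda \in (0,1/2)$,
\[
\Prob\big[X \ge k(1+t)\big] \le e^{-\lambda k (1+t)}(1-2\lambda)^{-k/2} = \exp\!\left(-k\Big[\lambda(1+t) + \tfrac12 \log(1-2\lambda)\Big]\right).
\]
The key point — and the only "trick" — is that rather than optimizing $\lambda$ exactly (which produces the slightly awkward exponent $\tfrac{k}{2}(t-\log(1+t))$), one picks the suboptimal value $\lambda = t/4$, which is admissible since $t \in (0,1)$. Then, using the elementary bound $\log(1-u) \ge -u - u^2$ valid for $u \in [0,1/2]$ (proved by comparing derivatives, since $\tfrac{1}{1-u} \le 1+2u$ is equivalent to $u(1-2u) \ge 0$) with $u = t/2$, the bracketed exponent is at least $\tfrac{t+t^2}{4} - \tfrac{t}{4} - \tfrac{t^2}{8} = \tfrac{t^2}{8}$, giving $\Prob[X \ge k(1+t)] \le e^{-kt^2/8}$.

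For the lower tail, the same argument applied to $e^{-\lambda X}$ gives $\Prob[X \le k(1-t)] \le \exp\!\big(-k[-\lambda(1-t) + \tfrac12\log(1+2\lambda)]\big)$ for $\lambda > 0$; again take $\lambda = t/4$ and use $\log(1+u) \ge u - u^2/2$ for $u \ge 0$ (since $\tfrac{1}{1+u} \ge 1-u$) with $u = t/2$ to bound the exponent below by $\tfrac{3t^2}{16} \ge \tfrac{t^2}{8}$, hence $\Prob[X \le k(1-t)] \le e^{-kt^2/8}$. Adding the two tail estimates yields $\Prob[|X-k| \ge kt] \le 2 e^{-kt^2/8}$.

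There is no real obstacle here — the result is classical (it is exactly the sub-exponential concentration statement for $\chi^2_k$, with $\chi^2_1$ being sub-exponential with parameters $(\nu,\alpha)=(2,4)$, additive over the $k$ coordinates) — so the "hard part" is purely bookkeeping: choosing the convenient non-optimal $\lambda = t/4$ so that the final constant lands cleanly at $8$, and verifying the two one-line logarithmic inequalities. One could alternatively just cite the general sub-exponential Bernstein bound and specialize, but the direct computation above is self-contained and short enough to include.
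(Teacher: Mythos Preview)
Your proof is correct. The paper itself does not prove this proposition at all --- it simply cites it as \cite[Example 2.11]{wainwright2019high} and moves on --- so there is no ``paper's proof'' to compare against. Your argument is precisely the standard sub-exponential/Chernoff derivation that Wainwright gives in that example: compute the MGF of $\chi^2_1$, use independence to get $(1\mp 2\lambda)^{-k/2}$, and optimize (or, as you do, take the convenient sub-optimal choice $\lambda=t/4$) to land on the constant $8$. The two logarithmic inequalities you invoke are exactly right and your bookkeeping checks out, so there is nothing to add.
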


\begin{proposition}\label{prop:perturbed_basis}
Let $M \in \mathbb{C}^{n \times n}$ be invertible, $\bm b \sim \mathrm{Unif}(\mathbb{S}^{n-1})$, and $t \in (0,1)$. Then
\[ \Prob \left[ \min_{j \in [n]} \frac{|[ M \bm b]_j|^2}{\|M \bm b \|^2_2} \ge \frac{t}{n^2 \kappa^2(M)}\right] \ge 1 - et.\]
\end{proposition}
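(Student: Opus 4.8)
The plan is to control, separately for each coordinate $j\in[n]$, the quantity $|[M\bm b]_j|^2/\|M\bm b\|_2^2$ from below in terms of $|\bm u_j^*\bm b|^2$ for a fixed unit vector $\bm u_j$, and then to take a union bound over the $n$ coordinates. Write $\bm m_j = M^*\bm e_j$ for the conjugate of the $j$-th row of $M$, so that $[M\bm b]_j = \bm m_j^*\bm b$. Since $M$ and $M^*$ have the same singular values, $\|\bm m_j\|_2 = \|M^*\bm e_j\|_2 \ge \|M^{-1}\|_2^{-1}$ uniformly in $j$; combined with $\|M\bm b\|_2 \le \|M\|_2\|\bm b\|_2 = \|M\|_2$ this gives, deterministically and for every $j$,
\[
\frac{|[M\bm b]_j|^2}{\|M\bm b\|_2^2}
= \frac{\|\bm m_j\|_2^2\,|\bm u_j^*\bm b|^2}{\|M\bm b\|_2^2}
\;\ge\; \frac{|\bm u_j^*\bm b|^2}{\|M\|_2^2\,\|M^{-1}\|_2^2}
= \frac{|\bm u_j^*\bm b|^2}{\kappa^2(M)},
\qquad \bm u_j := \frac{\bm m_j}{\|\bm m_j\|_2}.
\]
So it suffices to prove that $\min_{j\in[n]}|\bm u_j^*\bm b|^2 \ge t/n^2$ holds with probability at least $1-et$, for the fixed unit vectors $\bm u_1,\dots,\bm u_n$ determined by $M$.

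To that end I would first identify the distribution of $|\bm u^*\bm b|^2$ for an arbitrary fixed unit vector $\bm u\in\mathbb{C}^n$. Writing $\bm b\overset{d}{=}\bm g/\|\bm g\|_2$ with $\bm g\sim\compnormvec$ and using the unitary invariance of $\compnormvec$, we get $|\bm u^*\bm b|^2 \overset{d}{=} |g_1|^2/\|\bm g\|_2^2$, which is a $\mathrm{Beta}(1,n-1)$ random variable since $|g_1|^2\sim\tfrac{1}{2}\chi_2^2$ and $\|\bm g\|_2^2-|g_1|^2\sim\tfrac{1}{2}\chi_{2(n-1)}^2$ are independent. Hence, for any $s\in[0,1]$,
\[
\Prob\left[\,|\bm u^*\bm b|^2 \le s\,\right] = 1-(1-s)^{n-1} \le (n-1)\,s
\]
by Bernoulli's inequality. (If one prefers not to invoke the Beta distribution directly, the same estimate follows by conditioning on $\|\bm g\|_2^2-|g_1|^2$ and using $1-e^{-x}\le x$.)

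Finally I would take a union bound over $\bm u_1,\dots,\bm u_n$ with $s=t/n^2$:
\[
\Prob\left[\,\min_{j\in[n]}|\bm u_j^*\bm b|^2 < \frac{t}{n^2}\,\right]
\le \sum_{j=1}^n \Prob\left[\,|\bm u_j^*\bm b|^2 < \frac{t}{n^2}\,\right]
\le n\cdot\frac{(n-1)\,t}{n^2}
\le t \le e t,
\]
which together with the first display yields the claim.

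I do not expect a genuine obstacle here — the argument is routine once the reduction is set up. The two places that want a little care are that the row-norm bound $\|\bm m_j\|_2\ge\|M^{-1}\|_2^{-1}$ is uniform in $j$ (so a single factor $\kappa^2(M)$ handles all coordinates at once), and that the relevant one-dimensional marginal on the \emph{complex} sphere $\mathbb{S}^{n-1}$ is $\mathrm{Beta}(1,n-1)$, whose $\Theta(1/n)$ left tail is exactly what the $n$-fold union bound needs in order to absorb the threshold $t/n^2$. Incidentally, the computation actually delivers the stronger constant $1-t$; the weaker form $1-et$ stated here is all that is used in the sequel.
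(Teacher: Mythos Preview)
Your argument is correct and follows essentially the same route as the paper: both reduce $|[M\bm b]_j|^2/\|M\bm b\|_2^2$ to $|\bm u_j^*\bm b|^2/\kappa^2(M)$ via $\|M^*\bm e_j\|_2\ge\|M^{-1}\|_2^{-1}$ and $\|M\bm b\|_2\le\|M\|_2$, then apply a small-ball estimate for $|\bm u^*\bm b|^2$ on the complex sphere and union bound over $j$. The only difference is cosmetic: the paper cites an external lemma giving $\Prob[|\langle\bm b,\bm\nu\rangle|^2\le\tau/n]\le e\tau$, whereas you compute the exact $\mathrm{Beta}(1,n-1)$ law and its CDF, which yields the sharper constant $1-t$ you noticed.
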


\begin{proof}
By \cite[Lemma 2.2]{dasgupta2003elementary}, $\Prob\big[ |\langle \bm b , \bm \nu \rangle | ^2 \le \frac{\tau}{n} \big] \le e \tau$ for $\tau \in [0,1)$ and $\|\bm \nu \|_2 = 1$. We have
\[\Prob \left[|\langle M \bm b, \bm \nu \rangle|^2 \le \frac{t}{n^2 \|M^{-1}\|_2^2}\right] \le \Prob \left[|\langle \bm b, M^* \bm \nu \rangle|^2 \le \frac{t \| M^* \bm \nu\|_2^2}{n^2 }\right] \le \frac{et}{n} .\]
Applying a union bound for $\bm{e}_j$, $j \in [n]$, and noting that $\|M \bm{b}\|_2^2 \le \|M\|_2^2$ implies our desired result.
\end{proof}

In addition, the following propositions will be used to treat the non-normal setting in Section \ref{sec:nonnormal}.
\begin{proposition}\label{prop:traces}
    Let $\bm{b} \sim \compnormvec$ and $M \in \mathbb{C}^{n\times n}$ be a normal matrix.
    Then, $\mathbb{E}[\bm{b}^*M \bm{b}] = \mathrm{trace}(M)$ and $\mathrm{Var}[\bm{b}^*M \bm{b}] = \|M\|_F^2.$ 
\end{proposition}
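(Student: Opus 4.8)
The plan is to compute the mean and variance of the Hermitian (really, normal) quadratic form $\bm{b}^* M \bm{b}$ by exploiting the unitary invariance of the standard complex Gaussian. First I would diagonalize $M = U \Lambda U^*$ with $U$ unitary and $\Lambda = \mathrm{diag}(\mu_1,\ldots,\mu_n)$; since $\bm{b} \sim \compnormvec$ and $U^* \bm{b} \overset{d}{=} \bm{b}$, we have $\bm{b}^* M \bm{b} \overset{d}{=} \sum_{j=1}^n \mu_j |\bm{b}_j|^2$, reducing everything to the independent scalars $|\bm{b}_j|^2$. Writing $\bm{b}_j = x_j + i y_j$ with $x_j, y_j \sim \mathcal{N}(0,\tfrac12)$ independent, $|\bm{b}_j|^2 = x_j^2 + y_j^2$, so $2|\bm{b}_j|^2 \sim \chi^2_2$, giving $\mathbb{E}|\bm{b}_j|^2 = 1$ and $\mathrm{Var}(|\bm{b}_j|^2) = \tfrac14 \mathrm{Var}(\chi^2_2) = \tfrac14 \cdot 4 = 1$. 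Hence $\mathbb{E}[\bm{b}^* M \bm{b}] = \sum_j \mu_j = \mathrm{trace}(\Lambda) = \mathrm{trace}(M)$, which is the first claim.

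For the variance, note that $\mathrm{Var}[\bm{b}^* M \bm{b}] = \mathrm{Var}\big[\sum_j \mu_j |\bm{b}_j|^2\big]$; one subtlety here is that $\bm{b}^* M \bm{b}$ is complex-valued, so I would interpret $\mathrm{Var}[Z] = \mathbb{E}|Z - \mathbb{E}Z|^2$ for a complex random variable $Z$. By independence of the $|\bm{b}_j|^2$ across $j$, the cross terms vanish and $\mathrm{Var}\big[\sum_j \mu_j |\bm{b}_j|^2\big] = \sum_j |\mu_j|^2 \, \mathrm{Var}(|\bm{b}_j|^2) = \sum_j |\mu_j|^2 = \|\Lambda\|_F^2 = \|M\|_F^2$, where the last equality uses unitary invariance of the Frobenius norm. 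This completes the second claim.

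The only genuine care-points, rather than obstacles, are (i) fixing the convention that variance of a complex random variable means $\mathbb{E}|Z-\mathbb{E}Z|^2$ (so that the statement $\mathrm{Var} = \|M\|_F^2$ with $\|M\|_F^2$ real makes sense, and so cross terms $\mu_j \bar\mu_k$ are replaced by $|\mu_j|^2$ in the diagonal after taking modulus-squared and using independence with mean-zero centered variables); and (ii) justifying the reduction $\bm{b}^* M \bm{b} \overset{d}{=} \bm{b}^* \Lambda \bm{b}$, which follows because the law of $\compnormvec$ is invariant under multiplication by the unitary $U^*$. Everything else is the elementary moment computation for $\chi^2_2$ variables. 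I expect no step to be a real obstacle; the proof is short, and I would simply write it out in the order above.
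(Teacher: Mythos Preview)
Your proof is correct and follows essentially the same approach as the paper: diagonalize $M$ unitarily, use the unitary invariance of $\compnormvec$ to reduce $\bm{b}^* M \bm{b}$ to $\sum_j \mu_j |\bm{b}_j|^2$, and then compute the mean and variance from $\mathbb{E}|\bm{b}_j|^2 = \mathrm{Var}|\bm{b}_j|^2 = 1$. Your additional remarks on the complex-variance convention and the $\chi^2_2$ justification are welcome clarifications but not departures from the paper's argument.
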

\begin{proof}
We provide a proof only because of the involvement of complex random variables; see \cite[Lemma 3.2b.2]{quadraticforms} for well-known results for the real symmetric case. When $M$ is normal, the invariance of the distribution of $\bm{b}$ under rotation allows us to rewrite $\bm{b}^*M \bm{b} \overset{d}{=} \sum_{j=1}^n \lambda_j |\bm{b}_j|^2$, where $\{\lambda_j\}_{j=1}^n$ are the eigenvalues of $M$. For $\bm{b}_j \sim \compnorm$, we have $\mathbb{E}[|\bm{b}_j|^2] = 1$ and $\mathrm{Var}[|\bm{b}_j|^2] = 1$, and so
    \[\mathbb{E}\bigg[\sum_{j=1}^n \lambda_j |\bm{b}_j|^2 \bigg] = \sum_{j=1}^n \lambda_j \mathbb{E}\big[|\bm{b}_j|^2 \big] = \sum_{j=1}^n \lambda_j\]
    and
    \[ \mathrm{Var}\bigg[ \sum_{j=1}^n \lambda_j |\bm{b}_j|^2 \bigg] = \sum_{j=1}^n |\lambda_j|^2 \mathrm{Var}[|\bm{b}_j|^2] = \sum_{j=1}^n |\lambda_j|^2. \]
\end{proof}

\begin{proposition}[{\cite[Example 2.8, Proposition 2.9]{wainwright2019high}}]\label{cor:subexp}
Let $\bm{b} \sim \compnormvec$ and $M \in \mathbb{C}^{n\times n}$ be a Hermitian or skew-Hermitian matrix. Let $X = \bm{b}^*M \bm{b}$. Then,
$$\prb{|X - \mathbb{E}[X]| \geq t} \leq 2 \exp \left[ -\min \left( \frac{t^2}{4\|M\|_F^2}, \frac{t}{4\|M\|_2} \right)\right].$$
\end{proposition}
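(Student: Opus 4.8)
The plan is to reduce the statement to a standard sub-exponential (Bernstein-type) concentration bound for a weighted sum of independent $\chi^2$ variables, which is precisely the content of \cite[Example 2.8, Proposition 2.9]{wainwright2019high}; what follows is the reduction. First I would dispose of the skew-Hermitian case: if $M^* = -M$ then $iM$ is Hermitian, $\bm{b}^*M\bm{b} = -i\,\bm{b}^*(iM)\bm{b}$, and $\|iM\|_F = \|M\|_F$, $\|iM\|_2 = \|M\|_2$, so the claimed inequality for $M$ follows immediately from the one for $iM$. Thus I may assume $M$ is Hermitian and $X$ is real.

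Next I would write $X$ as a weighted chi-squared sum. Diagonalizing $M = U\Lambda U^*$ with $U$ unitary and $\Lambda = \mathrm{diag}(\lambda_1,\dots,\lambda_n)$, $\lambda_j \in \mathbb{R}$, unitary invariance of $\compnormvec$ gives $X \overset{d}{=} \sum_{j=1}^n \lambda_j|\bm{b}_j|^2$ with independent $\bm{b}_j \sim \compnorm$; and since $|\bm{b}_j|^2 = \tfrac12(Z_{2j-1}^2 + Z_{2j}^2)$ for independent standard real Gaussians $Z_1,\dots,Z_{2n}$, I obtain $X \overset{d}{=} \sum_{k=1}^{2n}\mu_k Z_k^2$ with $\mu_{2j-1} = \mu_{2j} = \lambda_j/2$. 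Here $\mathbb{E}[X] = \sum_k\mu_k = \mathrm{trace}(M)$, $\sum_k \mu_k^2 = \tfrac12\|M\|_F^2$, and $\max_k|\mu_k| = \tfrac12\|M\|_2$, consistent with Proposition~\ref{prop:traces}.

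Then I would bound the moment generating function coordinatewise. For a standard Gaussian $Z$, $\mathbb{E}[e^{s\mu(Z^2-1)}] = e^{-s\mu}(1-2s\mu)^{-1/2}$ whenever $2s\mu < 1$; using the elementary inequality $-u - \log(1-u) \le u^2$ for $|u| \le \tfrac12$ (checked termwise on the Taylor series) with $u = 2s\mu$ yields $\log\mathbb{E}[e^{s\mu(Z^2-1)}] \le 2s^2\mu^2$ for $|s| \le \tfrac{1}{4|\mu|}$. Multiplying over the independent coordinates,
\[
\log\mathbb{E}\big[e^{s(X-\mathbb{E}[X])}\big] \le 2s^2\sum_k\mu_k^2 = s^2\|M\|_F^2 \qquad\text{for } |s| \le \tfrac{1}{2\|M\|_2}.
\]
A Chernoff bound then finishes it: $\prb{X - \mathbb{E}[X] \ge t} \le \exp\!\big(-\sup_{0\le s\le 1/(2\|M\|_2)}(st - s^2\|M\|_F^2)\big)$, and splitting on whether $t \le \|M\|_F^2/\|M\|_2$ (take $s = t/(2\|M\|_F^2)$, giving exponent $t^2/(4\|M\|_F^2)$) or not (take $s = 1/(2\|M\|_2)$, giving exponent at least $t/(4\|M\|_2)$) produces in both regimes an exponent at least $\min(t^2/(4\|M\|_F^2),\, t/(4\|M\|_2))$. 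The lower tail is identical with $s \le 0$, and a union bound over the two tails supplies the factor $2$.

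The hard part will be, honestly, nothing substantial: this is a routine estimate, and once the reduction in the first two steps is in place one could equally just cite \cite[Example 2.8, Proposition 2.9]{wainwright2019high}. The only thing to watch is the bookkeeping of the factors of $\tfrac12$ arising in the complex-to-real passage, so that the sub-exponential parameters land exactly on $\|M\|_F$ and $\|M\|_2$ rather than on constant multiples of them.
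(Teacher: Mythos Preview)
Your proposal is correct and follows essentially the same route as the paper: diagonalize $M$, use unitary invariance of $\compnormvec$ to write $X$ as a weighted sum of independent chi-squared variables, and invoke sub-exponential (Bernstein) concentration with the resulting parameters. The only cosmetic differences are that you split each $|\bm b_j|^2$ into two real Gaussian squares and rederive the MGF bound explicitly, whereas the paper keeps the $n$ complex coordinates, reads off the sub-exponential parameters $(\sqrt{2}|\lambda_j|,2|\lambda_j|)$ for each term, and cites Wainwright directly; your reduction of the skew-Hermitian case via $iM$ is a slightly cleaner alternative to the paper's observation that the eigenvalues are then purely imaginary.
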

\begin{proof}
    Again rewrite $\bm{b}^*M \bm{b} \overset{d}{=} \sum_{j=1}^n \lambda_j |\bm{b}_j|^2$ where $\lambda_j$ are the eigenvalues of $M$. Since $M$ is Hermitian or skew-Hermitian, the eigenvalues are purely real or imaginary. Each term $|\lambda_j||\bm{b}_j|^2$ is sub-exponential with parameters $(\sqrt{2}|\lambda_j|, 2|\lambda_j|)$; thus, $\sum_{j=1}^n |\lambda_j||\bm{b}_j|^2$ is sub-exponential with parameters $(\sqrt{2}\|M\|_F, 2\|M\|_2)$.
\end{proof}

\begin{corollary}\label{cor:nonnormalsubexp}
    Let $M \in \mathbb{C}^{n \times n}$, $M_1 = \frac{1}{2}(M + M^*)$, and $M_2 = \frac{1}{2}(M-M^*)$, with corresponding random variables $X = \bm b^* M \bm b$, $X_1 = \bm b^*M_1\bm b$, and $X_2 =\bm b^*M_2 \bm b$. Then \[\mathrm{Var}[X] = \|M_1\|_F^2 + \|M_2\|_F^2 \qquad \text{ and } \qquad \prb{|X - \mathbb{E}[X]| \geq t} \leq 4 \exp \left[ -\min \left( \frac{t^2}{8\mathrm{Var}[X]}, \frac{t}{4 \sqrt{2} \|M\|_2} \right)\right].\]
\end{corollary}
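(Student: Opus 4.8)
The plan is to use the decomposition $M = M_1 + M_2$ of $M$ into its Hermitian part $M_1$ and skew-Hermitian part $M_2$, which gives $X = X_1 + X_2$, and then to reduce both claims to statements about the two \emph{normal} matrices $M_1$ and $M_2$, for which Propositions \ref{prop:traces} and \ref{cor:subexp} already apply. The structural observation driving everything is that, pointwise in $\bm{b}$, $X_1 = \bm{b}^* M_1 \bm{b}$ is real (since $M_1$ is Hermitian) while $X_2 = \bm{b}^* M_2 \bm{b}$ is purely imaginary (since $M_2$ is skew-Hermitian). Hence $X - \mathbb{E}[X] = (X_1 - \mathbb{E}[X_1]) + (X_2 - \mathbb{E}[X_2])$ is an orthogonal decomposition into a real and a purely imaginary part, so that $|X - \mathbb{E}[X]|^2 = |X_1 - \mathbb{E}[X_1]|^2 + |X_2 - \mathbb{E}[X_2]|^2$ pointwise.

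For the variance identity, I would first note $\mathbb{E}[X] = \mathbb{E}[X_1] + \mathbb{E}[X_2] = \mathrm{trace}(M_1) + \mathrm{trace}(M_2) = \mathrm{trace}(M)$ via Proposition \ref{prop:traces}. Taking expectations in the Pythagorean identity above and applying Proposition \ref{prop:traces} a second time (both $M_1$ and $M_2$ are normal) yields $\mathrm{Var}[X] = \mathrm{Var}[X_1] + \mathrm{Var}[X_2] = \|M_1\|_F^2 + \|M_2\|_F^2$.

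For the tail bound, the same Pythagorean identity shows that $\{|X - \mathbb{E}[X]| \ge t\} \subseteq \{|X_1 - \mathbb{E}[X_1]| \ge t/\sqrt{2}\} \cup \{|X_2 - \mathbb{E}[X_2]| \ge t/\sqrt{2}\}$, so a union bound reduces the problem to the two one-sided tails. Proposition \ref{cor:subexp} applies to $X_1$ with the Hermitian matrix $M_1$ and to $X_2$ with the skew-Hermitian matrix $M_2$; substituting $t/\sqrt{2}$ for $t$ there bounds each tail by $2\exp[-\min(t^2/(8\|M_j\|_F^2),\, t/(4\sqrt{2}\,\|M_j\|_2))]$. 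Finally I would replace these matrix-specific constants by the global ones: $\|M_j\|_F^2 \le \|M_1\|_F^2 + \|M_2\|_F^2 = \mathrm{Var}[X]$, and $\|M_j\|_2 = \|\tfrac{1}{2}(M \pm M^*)\|_2 \le \|M\|_2$ by the triangle inequality, so each substitution can only enlarge the right-hand side, giving each tail at most $2\exp[-\min(t^2/(8\mathrm{Var}[X]),\, t/(4\sqrt{2}\,\|M\|_2))]$; summing the two yields the stated factor of $4$.

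There is no serious obstacle here: the corollary is a bookkeeping exercise layered on top of Propositions \ref{prop:traces} and \ref{cor:subexp}. The only points requiring care are the various $\sqrt{2}$ factors introduced when splitting $t^2$ between the real and imaginary parts, the direction of the inequalities when passing from $\|M_j\|$ to the global norms (one needs the exponent $-\min(\cdot,\cdot)$ to move in the direction that keeps the bound valid), and the observation that the Hermitian/skew-Hermitian structure is precisely what lets the real and imaginary parts of $X - \mathbb{E}[X]$ decouple, both in the Pythagorean identity and in the union bound.
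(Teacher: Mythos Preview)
Your argument is correct and follows essentially the same route as the paper: split $X$ into its real part $X_1$ and purely imaginary part $X_2$, use the resulting orthogonality to get $\mathrm{Var}[X]=\mathrm{Var}[X_1]+\mathrm{Var}[X_2]$, and for the tail apply the union bound $\{|X-\mathbb{E}[X]|\ge t\}\subseteq\{|X_1-\mathbb{E}[X_1]|\ge t/\sqrt{2}\}\cup\{|X_2-\mathbb{E}[X_2]|\ge t/\sqrt{2}\}$ together with Proposition~\ref{cor:subexp}. You even make explicit the final step the paper leaves implicit, namely replacing $\|M_j\|_F^2$ by $\mathrm{Var}[X]$ and $\|M_j\|_2$ by $\|M\|_2$ via the triangle inequality.
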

\begin{proof}
    Note that $X = X_1 + X_2$, $X_1$ is purely real, and $X_2$ is purely imaginary. For the first part, rewrite 
    \[\mathrm{Var}[X] = \mathrm{Var}[\mathrm{Re}(X)] + \mathrm{Var}[\mathrm{Im}(X)] = \mathrm{Var}[X_1] + \mathrm{Var}[X_2].\] For the second, note that
    \[\prb{|X - \mathbb{E}[X]| \geq t} \leq \prb{|X_1 - \mathbb{E}[X_1]| \geq t/\sqrt{2}} + \prb{|X_2 - \mathbb{E}[X_2]| \geq t/\sqrt{2}}.\] The given bound is not tight but suffices for our purposes.
\end{proof}

\begin{proposition}\label{prop:notsmall}
    Let $\bm b \sim \compnormvec$ be a random vector and $M \in \mathbb{C}^{n\times n}$ be a positive-definite Hermitian matrix. Then,
    $\prb{\bm b^*M \bm b \leq t \cdot \mathrm{trace}(M)} \leq et$ for all $t > 0$.
\end{proposition}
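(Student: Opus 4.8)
The plan is to reduce the quadratic form to a weighted sum of independent exponential random variables and then run a one-parameter Chernoff bound. First I would diagonalize $M = U \Lambda U^*$ with $\Lambda = \mathrm{diag}(\lambda_1,\ldots,\lambda_n)$ and all $\lambda_j > 0$. Since the law of $\bm b \sim \compnormvec$ is invariant under unitary transformations, $\bm b^* M \bm b \overset{d}{=} \sum_{j=1}^n \lambda_j |\bm c_j|^2$ for $\bm c \sim \compnormvec$, exactly as in the proofs of Propositions \ref{prop:traces} and \ref{cor:subexp}. Each $|\bm c_j|^2$ is a sum of two independent $\mathcal{N}(0,\tfrac12)$ squares, hence exponentially distributed with mean one, so that $\mathbb{E}\big[e^{-s\lambda_j|\bm c_j|^2}\big] = (1+s\lambda_j)^{-1}$ for every $s > 0$.

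Next, writing $T = \mathrm{trace}(M) = \sum_j \lambda_j$, the Chernoff bound applied to $e^{-s \sum_j \lambda_j |\bm c_j|^2}$ gives, for any $s > 0$,
\[ \prb{\bm b^* M \bm b \le t T} \le e^{s t T} \prod_{j=1}^n \frac{1}{1+s\lambda_j}. \]
The key elementary observation is that, since all $\lambda_j \ge 0$, expanding the product termwise yields $\prod_j (1+s\lambda_j) \ge 1 + s\sum_j \lambda_j = 1 + sT$, so the right-hand side is at most $e^{stT}/(1+sT)$.

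Finally I would optimize over $s$. If $t \ge 1$ the claimed bound $et \ge e > 1$ is trivial, so assume $t < 1$ and set $sT = \tfrac1t - 1 > 0$; then $e^{stT}/(1+sT) = t\, e^{1-t} \le et$, which is the desired inequality. There is no genuine obstacle in this argument — the only points requiring minor care are the identification of $|\bm c_j|^2$ with a unit-mean exponential variable, the product inequality $\prod_j(1+s\lambda_j) \ge 1+sT$, and handling the trivial regime $t \ge 1$ separately in the optimization; everything else is a routine computation.
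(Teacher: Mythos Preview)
Your argument is correct and essentially identical to the paper's proof: both diagonalize to rewrite $\bm b^* M \bm b$ as $\sum_j \lambda_j |\bm c_j|^2$ with $|\bm c_j|^2$ unit-mean exponentials, apply the Chernoff bound using $\mathbb{E}[e^{-s\lambda_j|\bm c_j|^2}]=(1+s\lambda_j)^{-1}$, invoke the inequality $\prod_j(1+s\lambda_j)\ge 1+s\sum_j\lambda_j$, and then choose $sT=1/t-1$ (this is exactly the paper's choice $c=(t^{-1}-1)/\mathrm{trace}(M)$) to obtain $t e^{1-t}\le et$. The handling of the trivial case $t\ge 1$ is also the same.
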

\begin{proof}
    The proof is a standard Chernoff bound adapted from \cite{anticoncentration} for complex normals. Again rewrite $\bm{b}^*M \bm{b} \overset{d}{=} \sum_{j=1}^n \lambda_j |\bm{b}_j|^2$ where $\lambda_j$ are the eigenvalues of $M$. Assume $t<1$, otherwise the statement is trivially true. Then, for $c = (t^{-1}-1)/\mathrm{trace}(M)$,
    \begin{align*}
        \prb{\bm b^*M\bm b \leq t \cdot \mathrm{trace}(M)} &= \prb{ t \sum_{j=1}^n \lambda_j - \sum_{j=1}^n \lambda_j |\bm{b}_j|^2 \geq 0}  \\ 
        &= \prb{\prod_{j=1}^n e^{c \lambda_j(t - |\bm{b}_j|^2)} \geq 1}\\
        &\leq \prod_{j=1}^n \mathbb{E}\left[e^{c \lambda_j(t - |\bm{b}_j|^2)} \right] \\
        &= \prod_{j=1}^n \frac{e^{c \lambda_j t}}{1+c \lambda_j} \\
        &\leq e^{c\cdot \mathrm{trace}(M) t} (1+c \cdot \mathrm{trace}(M))^{-1} = e^{1-t} t.
    \end{align*}
\end{proof}

\section{The Numerical Range of a Normal Matrix} \label{sec:normal}
In this section, we produce estimates on the approximation of the numerical range $W(A)$ of a normal matrix $A \in \mathbb{C}^{n \times n}$ by a Krylov subspace. In particular, we prove estimates on the Hausdorff distance between $W(A)$ and $W(H_m)$, where $H_m$ is the orthogonal projection of $A$ onto $\mathcal{K}_m(A,\bm b)$ for a random $\bm b \sim \mathrm{Unif}(\mathbb{S}^{n-1})$. We produce an upper bound of the form $\tfrac{\ln n}{m}$ for an arbitrary normal matrix (Subsection \ref{sub:num_upper}), and a nearly matching lower bound of the form $\tfrac{1}{m}$ (Subsection \ref{sub:num_lower}). (The necessity of the log term has already been treated in a number of previous works, see \cite[Theorem A.1]{simchowitz2018tight} and \cite[Lemma 3.8]{urschel2021uniform}.) We also provide an improved estimate of the form $\tfrac{\ln^2 n}{m^2}$ for normal matrices with their spectrum on the unit circle $\mathbb{S}^0$, and a nearly matching lower bound of the form $\tfrac{1}{m^2}$ (Subsection \ref{sub:num_circle}). We note that, based on the degree of the polynomials constructed in Subsection \ref{sub:poly}, the below results are stated not for all $m$, but for either $2m+1$ or $6m+1$. However, this immediately implies bounds for arbitrary $m$, based on the standard inclusion $\mathcal{K}_1(A,\bm b) \subset \mathcal{K}_2(A,\bm b) \subset \ldots \subset \mathcal{K}_m(A,\bm b) \subset \ldots$

\subsection{Estimating the Numerical Range}\label{sub:num_upper}

To estimate the approximation $W(H_m)$ provides to $W(A)$, we first estimate the distance between $W(H_m)$ and an arbitrary extreme eigenvalue $\lambda$ of $A$.

\begin{lemma}\label{lm:single_eig}
Let $m,n \in \mathbb{N}$, $A \in \mathbb{C}^{n \times n}$ be normal, and $\lambda \in \Lambda(A)$ be an extreme point of $W(A)$ with a corresponding unit eigenvector $\bm{\varphi}$. Then
\[  \min_{\bm x \in \mathcal{K}_{6m+1}(A,\bm b)} \bigg| \lambda - \frac{\bm x^* A \bm x}{\bm x^* \bm x} \bigg| \le \frac{6}{m} \ln  \bigg( \frac{em \|\bm b\|^2_2 }{6 |\langle \bm b, \bm \varphi \rangle |^2}\bigg) \, \mathrm{diam}(W(A)). \]
\end{lemma}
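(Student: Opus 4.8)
The plan is to reduce the Rayleigh-quotient minimization to the polynomial formulation in Equation \eqref{eqn:normal_RQ} and then plug in the extremal polynomial from Proposition \ref{prop:circle_poly} (appropriately transformed). Write $A = V\Lambda V^{-1}$ with $V$ unitary (since $A$ is normal), set $\bm\alpha = V^{-1}\bm b$, and let $\bm\varphi$ be the eigenvector for $\lambda$, so $|\langle \bm b, \bm\varphi\rangle|^2 = |\bm\alpha_\ell|^2$ for the index $\ell$ with $\Lambda_\ell = \lambda$. After a rigid motion and rescaling of the plane (which changes neither the Krylov subspace nor $d(\lambda, W(H_m))/\mathrm{diam}(W(A))$), I may assume $\lambda$ sits at the rightmost point of $W(A)$, that $\mathrm{Re}(z) \le 0$ for all $z \in W(A)$ after translating so $\lambda$ is at the origin — more precisely, I will arrange coordinates so that $\lambda$ corresponds to $\arccos(c_2)$ on the unit circle picture, with every other eigenvalue mapping into the region where the Remez-type bound of Proposition \ref{prop:circle_poly} applies. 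Since $\lambda$ is an extreme point of the convex body $W(A)$, there is a supporting line at $\lambda$; this is exactly what lets me separate $\lambda$ from the rest of the spectrum by a halfplane, which is the geometric input needed to invoke the extremal polynomial construction.

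The core estimate goes as follows. Choose the polynomial $p = \widehat P_{m,c_1,c_2,\delta}$ of degree at most $2m$ — wait, I actually want degree roughly $6m$; I will instead use the half-annulus polynomial $\widetilde P_{m,\delta}$ of Proposition \ref{prop:remez}, which has degree $6m$ and satisfies $\widetilde P_{m,\delta}(1) \ge \exp(m\delta/12)\max_{z \in D_\delta}|\widetilde P_{m,\delta}(z)|$. Mapping $\lambda$ to $1$ and the rest of $W(A)$ (scaled to have diameter comparable to $1$) into $D_\delta$ — using that $\lambda$ is extreme, so a disk/halfplane argument places $\Lambda(A)\setminus\{\lambda\}$ at distance $\ge \delta\,\mathrm{diam}(W(A))$ in the transformed picture once we excise a $\delta$-neighborhood of $\lambda$ — I get from \eqref{eqn:normal_RQ}:
\[
\min_{\bm x}\bigg|\lambda - \frac{\bm x^*A\bm x}{\bm x^*\bm x}\bigg|
\le \frac{\sum_{j}|\bm\alpha_j|^2|p(\Lambda_j)|^2\,|\lambda - \Lambda_j|}{\sum_j |\bm\alpha_j|^2|p(\Lambda_j)|^2}
\le \delta\,\mathrm{diam}(W(A)) + \frac{\mathrm{diam}(W(A))\sum_{j \ne \ell}|\bm\alpha_j|^2|p(\Lambda_j)|^2}{|\bm\alpha_\ell|^2|p(\lambda)|^2},
\]
where the first term accounts for eigenvalues inside the $\delta$-neighborhood of $\lambda$ and the second bounds the contribution of the far eigenvalues, each of which has $|p(\Lambda_j)|^2 \le e^{-m\delta/6}|p(\lambda)|^2$. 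This yields a bound of the form
\[
\delta\,\mathrm{diam}(W(A)) + \frac{\|\bm b\|_2^2}{|\langle \bm b,\bm\varphi\rangle|^2}\,e^{-m\delta/6}\,\mathrm{diam}(W(A)),
\]
and optimizing over $\delta$ — taking $\delta = \frac{6}{m}\ln\!\big(\tfrac{em\|\bm b\|_2^2}{6|\langle\bm b,\bm\varphi\rangle|^2}\big)$ — balances the two terms and produces the claimed bound (the $em/6$ inside the log and the overall factor $6/m$ come out of exactly this optimization, and one checks $\delta < 1$ in the regime of interest, else the claimed bound exceeds $\mathrm{diam}(W(A))$ and holds trivially since $d(\lambda,W(H_m)) \le \mathrm{diam}(W(A))$).

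The step I expect to be the main obstacle is the geometric reduction: verifying that after the rigid motion and scaling, the spectrum of $A$ genuinely lands inside the half-annulus $D_\delta$ (or whichever region Proposition \ref{prop:remez} or \ref{prop:circle_poly} demands), using only that $\lambda$ is an \emph{extreme point} of $W(A)$ rather than, say, strictly separated by a gap. The supporting-hyperplane property at $\lambda$ gives a halfplane containing $W(A)$ with $\lambda$ on its boundary, but fitting this into the disk-based region $D_\delta$ requires care about how the quadratic map $P(z)$ in Proposition \ref{prop:remez} acts, and about the precise constant $\delta$ by which one must shrink to get strict containment of the far eigenvalues — this is the same casework that the authors defer to the appendix for Proposition \ref{prop:remez}, and I would need the analogous (hopefully lighter) version here. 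A secondary technical point is handling eigenvalues of $A$ that coincide with or cluster extremely near $\lambda$: these contribute to the numerator with small $|\lambda - \Lambda_j|$, so they are absorbed into the $\delta\,\mathrm{diam}(W(A))$ term, but one must confirm the bookkeeping splits cleanly between "near" eigenvalues (bounded by proximity) and "far" eigenvalues (bounded by polynomial decay).
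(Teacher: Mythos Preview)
Your approach is essentially the paper's: polynomial formulation via Equation~\eqref{eqn:normal_RQ}, split into near/far eigenvalues at scale $\delta$, apply $\widetilde P_{m,\delta}$ from Proposition~\ref{prop:remez} to kill the far ones, then optimize $\delta$. The only substantive correction is that the geometric reduction you flag as the main obstacle is in fact immediate, and you have the target point wrong. Place $\lambda$ at the \emph{origin} (not at $1$): since $\lambda$ is an extreme point of the convex set $W(A)$, rotate so a supporting line at $\lambda$ is the imaginary axis, i.e.\ $W(A)\subset\{\mathrm{Re}(z)\le 0\}$, and rescale so $\mathrm{diam}(W(A))=1$. Then every $z\in W(A)$ satisfies $|z|=|z-\lambda|\le 1$ and $\mathrm{Re}(z)\le 0$, so $W(A)\subset D_0$ automatically and the far eigenvalues ($|\Lambda_j|>\delta$) lie in $D_\delta$ with no further work; Proposition~\ref{prop:remez} is invoked directly (the polynomial is large at $z=0$ because $P(0)=1$), and none of the appendix casework is needed here. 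Your $\delta$ optimization and the near/far bookkeeping are correct; the paper takes $\delta=\tfrac{6}{m}\ln\!\big(\tfrac{m\|\bm b\|^2}{6|\langle\bm b,\bm\varphi\rangle|^2}\big)$ (no $e$ inside), so the two terms sum to $\tfrac{6}{m}\big(1+\ln(\cdots)\big)=\tfrac{6}{m}\ln\!\big(e\cdots\big)$, which is where the $e$ in the statement comes from.
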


\begin{proof}
Without loss of generality, let $\lambda = 0$ be the extreme point of $W(A)$ of interest, $\mathrm{diam}(W(A)) =1$, and $W(A) \subset D_0$, the half disk (see Equation \ref{eqn:halfannulus}). Let $\delta >0$ and $S_\delta = \{\Lambda_j \in \Lambda(A) \, | \, |\Lambda_j| > \delta \}$. By Equation \ref{eqn:normal_RQ},
\begin{align*}
\min_{x \in \mathcal{K}_{6m+1}(A,b)} \bigg| \frac{\bm x^* A \bm x}{\bm x^* \bm x} \bigg|  &\le  \min_{p \in \mathcal{P}_{6m}} \left| \frac{\sum_{\Lambda_j \in S_\delta} |\bm \alpha_j|^2  |p(\Lambda_j)|^2 \Lambda_j}{\sum_{j=1}^n |\bm \alpha_j|^2  |p(\Lambda_j)|^2} \right| +  \left| \frac{\sum_{\Lambda_j \not \in S_\delta} |\bm \alpha_j|^2  |p(\Lambda_j)|^2 \Lambda_j}{\sum_{j=1}^n |\bm \alpha_j|^2  |p(\Lambda_j)|^2} \right| \\
&\le \min_{p \in \mathcal{P}_{6m}} \frac{ \sum_{\Lambda_j \in S_\delta} |\bm \alpha_j|^2  |p(\Lambda_j)|^2 }{|\langle \bm b, \bm \varphi \rangle |^2  |p(0)|^2} + \delta.
\end{align*}
Now, consider the choice $p(z) = \widetilde P_{m, \delta}(z)$, as defined in Proposition \ref{prop:remez}. By Inequality \ref{ineq:remez},
\[\frac{\sum_{\Lambda_j \in S_\delta} |\bm \alpha_j|^2  |\widetilde P_{m,\delta}(\Lambda_j)|^2 }{|\langle \bm b, \bm \varphi \rangle |^2  |\widetilde P_{m,\delta}(0)|^2} \le \frac{\|\bm b\|^2_2}{|\langle \bm b, \bm \varphi \rangle |^2} \exp\left(-\frac{m \delta}{6} \right) .\]
 Setting $\delta = 6 m^{-1} \ln  \big( \frac{m \| \bm b\|^2_2 }{6 |\langle \bm b, \bm \varphi \rangle |^2}\big)$ (if $\delta >1$, the result is trivially true), we obtain our desired result
\[ \frac{\|\bm b\|^2_2}{|\langle \bm b, \bm \varphi \rangle |^2} \exp\left(-\frac{m \delta}{6} \right) + \delta \le \frac{6}{m}\left(1 +   \ln  \bigg( \frac{m \|\bm b\|^2_2 }{6 |\langle \bm b, \bm \varphi \rangle |^2}\bigg) \right) .\]
\end{proof}

\begin{theorem}\label{thm:main} Let $m,n \in \mathbb{N}$, $\alpha>0$, $A \in \mathbb{C}^{n \times n}$ be normal, and $\{\bm \varphi^{(1)},\ldots,\bm \varphi^{(n)} \}$ be an orthonormal eigenbasis. Then
\[d_{H}\big(W(H_{6m+1}),W(A) \big) \le \frac{6}{m} \ln  \bigg( \frac{em \|\bm b\|^2_2 }{6 \min_{j \in [n]} |\langle \bm b, \bm \varphi^{(j)} \rangle |^2}\bigg) \, \mathrm{diam}(W(A)) . \]
In particular, if $\bm b \sim \mathrm{Unif}(\mathbb{S}^{n-1})$, then
\[\Prob\bigg[ \frac{d_{H}\big(W(H_{6m+1}),W(A) \big)}{\mathrm{diam}(W(A)) } \le \frac{6(2+\alpha) \ln n}{m}  \bigg] \ge 1 - \frac{5m}{4n^{\alpha}}. \]
\end{theorem}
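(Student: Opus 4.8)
The plan is to reduce the Hausdorff bound to the single-eigenvalue estimate of Lemma~\ref{lm:single_eig} applied at each vertex of $W(A)$, push this through Proposition~\ref{prop:polytopevertices}, and then control the randomness of $\bm b$ via the anti-concentration bound underlying Proposition~\ref{prop:perturbed_basis}.

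First I would record two structural facts. Since $A$ is normal, $W(A)=\mathrm{conv}(\Lambda(A))$, so $W(A)$ is a convex polytope whose vertices form a subset of $\Lambda(A)$; in particular every vertex of $W(A)$ is an eigenvalue of $A$ and is an extreme point of $W(A)$, so both hypotheses of Lemma~\ref{lm:single_eig} are satisfied at each vertex. Second, writing $H_{6m+1}=Q^*AQ$ with $Q$ having orthonormal columns spanning $\mathcal{K}_{6m+1}(A,\bm b)$, for any unit $\bm y$ we have $\bm y^*H_{6m+1}\bm y=(Q\bm y)^*A(Q\bm y)\in W(A)$, hence $W(H_{6m+1})\subseteq W(A)$, and moreover $W(H_{6m+1})=\{\bm x^*A\bm x/\bm x^*\bm x: \bm x\in\mathcal{K}_{6m+1}(A,\bm b)\}$, so the left-hand side of Lemma~\ref{lm:single_eig} is precisely $d(\lambda,W(H_{6m+1}))$.

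Now set $\delta:=\tfrac{6}{m}\ln\!\big(em\|\bm b\|_2^2/(6\min_{j}|\langle\bm b,\bm\varphi^{(j)}\rangle|^2)\big)\,\mathrm{diam}(W(A))$. If $\mathrm{diam}(W(A))=0$ then $A$, and hence $H_{6m+1}$, is scalar and there is nothing to prove; if $\delta=\infty$ the bound is vacuous; otherwise, for each vertex $\lambda$ of $W(A)$ with eigenvector $\bm\varphi\in\{\bm\varphi^{(1)},\dots,\bm\varphi^{(n)}\}$, Lemma~\ref{lm:single_eig} together with $|\langle\bm b,\bm\varphi\rangle|^2\ge\min_j|\langle\bm b,\bm\varphi^{(j)}\rangle|^2$ and monotonicity of $\ln$ gives $d(\lambda,W(H_{6m+1}))\le\delta$ uniformly over vertices. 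Applying Proposition~\ref{prop:polytopevertices} with $\mathcal{U}=W(A)$ (a polytope), $\mathcal{V}=W(H_{6m+1})\subseteq\mathcal{U}$, and the vertices of $W(A)$ yields $d_H(W(H_{6m+1}),W(A))\le\delta$, the first inequality.

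For the probabilistic claim take $\bm b\sim\mathrm{Unif}(\mathbb{S}^{n-1})$, so $\|\bm b\|_2=1$, and note that $\delta\le\tfrac{6(2+\alpha)\ln n}{m}\,\mathrm{diam}(W(A))$ is implied by $\min_j|\langle\bm b,\bm\varphi^{(j)}\rangle|^2\ge em/(6n^{2+\alpha})$. Let $\Phi=[\bm\varphi^{(1)}\ \cdots\ \bm\varphi^{(n)}]$, which is unitary with $\kappa(\Phi^*)=1$, $[\Phi^*\bm b]_j=\langle\bm b,\bm\varphi^{(j)}\rangle$, and $\|\Phi^*\bm b\|_2=1$; then Proposition~\ref{prop:perturbed_basis} with $M=\Phi^*$ and $t=em/(6n^{\alpha})$ gives
\[
\Prob\!\left[\min_j|\langle\bm b,\bm\varphi^{(j)}\rangle|^2\ge\frac{em}{6n^{2+\alpha}}\right]\ge 1-\frac{e^2m}{6n^{\alpha}}\ge 1-\frac{5m}{4n^{\alpha}},
\]
using $e^2/6<5/4$, which combined with the first inequality proves the theorem. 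I do not expect a genuine obstacle here — the analytic work is in Lemma~\ref{lm:single_eig} and Proposition~\ref{prop:remez}; the only care needed is checking that the vertices of $W(A)$ really are eigenvalues (so Lemma~\ref{lm:single_eig} applies at every vertex), the containment $W(H_{6m+1})\subseteq W(A)$ (which upgrades the one-sided bound of Proposition~\ref{prop:polytopevertices} to a true Hausdorff bound), and the constant bookkeeping $e^2/6<5/4$.
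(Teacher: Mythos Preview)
Your proposal is correct and follows essentially the same approach as the paper: reduce the Hausdorff bound to the per-vertex estimate of Lemma~\ref{lm:single_eig} via Proposition~\ref{prop:polytopevertices} together with the inclusion $W(H_{6m+1})\subset W(A)$, and then handle the probabilistic part with Proposition~\ref{prop:perturbed_basis} using $t=em/(6n^{\alpha})$ and the bound $e^2/6<5/4$. You are, if anything, slightly more careful than the paper in spelling out why the vertices of $W(A)$ are eigenvalues and why the containment $W(H_{6m+1})\subset W(A)$ holds.
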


\begin{proof}
By Proposition \ref{prop:polytopevertices} and the inclusion $W(H_{6m+1})\subset W(A)$, if the distance from every extreme point of $W(A)$ to $W(H_{6m+1})$ is at most $\epsilon \, \mathrm{diam}(W(A))$, then the Hausdorff distance between $W(H_{6m+1})$ and $W(A)$ is also at most that quantity. Combining this fact with Lemma \ref{lm:single_eig} completes the proof of the first part of the theorem statement.

Now suppose that $\bm b \sim \mathrm{Unif}(\mathbb{S}^{n-1})$. By Proposition \ref{prop:perturbed_basis} with $M = [\bm \varphi^{(1)} \ldots \, \bm \varphi^{(n)}]^*$ and $t = e m /(6 n^{\alpha})$, $\min_{j \in [n]} | \langle  \bm b, \bm \varphi^{(j)} \rangle|^2 \ge e m /(6 n^{2+\alpha})$ with probability at least $1 - e^2 m /(6 n^\alpha) > 1- \tfrac{5}{4} m n^{-\alpha}$. Therefore, with probability at least $1- \tfrac{5}{4} m n^{-\alpha}$,
\[ \frac{6}{m}  \ln  \bigg( \frac{ e  m \|\bm b\|^2_2 }{6 |\langle \bm b, \bm \varphi^{(j)} \rangle  |^2}\bigg) \le \frac{6}{m}  \ln  n^{2+\alpha} = \frac{6(2+\alpha) \ln n}{m} \qquad \text{for all} \quad j \in [n].  \]
\end{proof}

\subsection{Lower Bounds for Krylov Subspace Approximation}\label{sub:num_lower}
In the previous subsection we proved that, with high probability, the numerical range is approximated up to error $\epsilon$ when $m$ $\scriptstyle \gtrsim$ $\epsilon^{-1} \ln n$. It is already well-known that the Krylov subspace dimension $m$ must be at least $\ln n$ in order to guarantee any non-trivial approximation of the entire numerical range \cite[Theorem A.1]{simchowitz2018tight}. Furthermore, some multiplicative relationship between $\ln n$ and $\epsilon^{-1}$ is needed in the bound for $m$. For instance, taking $\epsilon = [(1+\gamma)\ln \ln n]^{-1}$, $m = \gamma \ln n + P(1/\epsilon)$ for any fixed constant $\gamma>0$ and polynomial $P$, and taking $n$ sufficiently large, the approximation produced will be greater than $\epsilon$ with probability $1-o_n(1)$ \cite[Lemma 3.8]{urschel2021uniform}. In addition, when the matrix is Hermitian, dimension $m \gtrsim \epsilon^{-1/2}$ is required to guarantee $\epsilon$ error, a tight lower bound in this setting \cite[Lemma 3.5]{urschel2021uniform}. Here, we show that for normal matrices $m \gtrsim \epsilon^{-1}$ is required to obtain at most $\epsilon$ error, thus matching the bounds for Theorem \ref{thm:main}. In fact, we show this not just for Hausdorff distance, but for the distance between $W(H_m)$ and $\partial W(A)$, i.e., we produce an instance where no extreme eigenvalue of $A$ is approximated better than $1/m$.

\begin{proposition}\label{prop:power_sum}
Let $k,n \in \mathbb{N}$, $k < n$. Then $\sum_{j=1}^n j^{k+1} < (n-\tfrac{1}{e+1}) \sum_{j=1}^n j^{k}$.
\end{proposition}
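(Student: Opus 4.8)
The plan is to rewrite the inequality in a form that isolates exactly where the constant $\tfrac{1}{e+1}$ comes from. Using the identity $n\sum_{j=1}^n j^k-\sum_{j=1}^n j^{k+1}=\sum_{j=1}^n (n-j)j^k$ and noting that the $j=n$ summand on the right vanishes, the claim is equivalent to
$$\sum_{j=1}^{n-1}(n-j)\,j^k \;>\; \frac{1}{e+1}\sum_{j=1}^n j^k .$$
Peeling off the $j=n$ term $n^k$ on the right, this is in turn equivalent to
$$\sum_{j=1}^{n-1}\Bigl(n-j-\tfrac{1}{e+1}\Bigr)j^k \;>\; \frac{n^k}{e+1}.$$

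The next step is a one-term lower bound on the left-hand side. Every coefficient $n-j-\tfrac{1}{e+1}$ is positive for $1\le j\le n-1$ (it is at least $\tfrac{e}{e+1}$) and every $j^k\ge 1$, so dropping all summands except $j=n-1$ only decreases the sum; hence it suffices to prove
$$\frac{e}{e+1}\,(n-1)^k \;>\; \frac{n^k}{e+1}, \qquad\text{i.e.}\qquad \Bigl(\tfrac{n}{n-1}\Bigr)^{k} < e .$$

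Finally I would invoke the hypothesis $k<n$. Since $k\ge 1$ this forces $n\ge 2$, so $n-1\ge 1$ and $k\le n-1$; therefore
$$\Bigl(\tfrac{n}{n-1}\Bigr)^{k} = \Bigl(1+\tfrac{1}{n-1}\Bigr)^{k} \le \Bigl(1+\tfrac{1}{n-1}\Bigr)^{n-1} < e,$$
the last inequality being the standard estimate $(1+1/m)^m<e$ (which follows from $\ln(1+x)<x$ for $x>0$). This completes the argument.

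There is no real obstacle here; the only points requiring a moment of care are (i) verifying $n-j-\tfrac1{e+1}>0$ for $1\le j\le n-1$ so that discarding terms is legitimate, and (ii) noting that $k<n$ together with $k\ge 1$ gives $n\ge 2$, so that $(1+1/(n-1))^{n-1}$ makes sense and the exponential bound applies. The constant $\tfrac{1}{e+1}$ is reverse-engineered precisely so that after keeping only the single term $j=n-1$ the residual inequality becomes $(n/(n-1))^k<e$, for which the hypothesis $k<n$ is exactly what is needed. The estimate is far from tight — the extremal case is essentially $n=2$, $k=1$, where the true gap is $\tfrac13>\tfrac1{e+1}$ — but it is clean and suffices for its later use in Subsection~\ref{sub:num_lower}.
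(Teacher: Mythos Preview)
Your proof is correct and follows essentially the same route as the paper's: both rewrite $n\sum j^k-\sum j^{k+1}=\sum (n-j)j^k$, reduce the question to whether the single term $j=n-1$ already compensates for the $j=n$ contribution, and finish with $(1+\tfrac{1}{n-1})^{k}\le (1+\tfrac{1}{n-1})^{n-1}<e$. The paper packages this as the one-step ratio bound $\frac{\sum(n-j)j^k}{\sum j^k}\ge \frac{(n-1)^k}{n^k+(n-1)^k}$, whereas you peel off terms explicitly, but the content is identical.
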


\begin{proof}
We have
\[\frac{ n \sum_{j=1}^n j^{k} -  \sum_{j=1}^n j^{k+1}}{\sum_{j=1}^n j^{k}} = \frac{ \sum_{j=1}^n (n-j) j^{k}}{\sum_{j=1}^n j^{k}} \ge \frac{(n-1)^k}{n^k + (n-1)^k} = \frac{1}{\big(1+\tfrac{1}{n-1} \big)^k+1}>\frac{1}{e+1}.\]
\end{proof}

\begin{proposition}\label{prop:polar}
Let $V \in \mathbb{C}^{m \times n}$ satisfy $V^* V = I$ with $m \ge n$, $D \in \mathbb{C}^{m \times m}$ be diagonal with $\|D\|_2  <1$, and $QH$ be the polar decomposition of $(I+D)V$. Then
\[ \| V^* A V - Q^* A Q \|_2 \le \frac{4 \|A\|_2  \|D\|_2}{1-\|D\|_2} \]
for all $A \in \mathbb{C}^{m \times m}$.
\end{proposition}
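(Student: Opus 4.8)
The plan is to compare $V^*AV$ and $Q^*AQ$ by writing $Q = V + E$ for an error matrix $E$ whose norm I can control via the polar decomposition, and then expanding $Q^*AQ - V^*AV$ telescopically. First I would record the polar factor: since $(I+D)V = QH$ with $Q^*Q = I$ and $H$ Hermitian positive semidefinite, we have $H = ((I+D)V)^*((I+D)V))^{1/2} = (V^*(I+D)^*(I+D)V)^{1/2}$, so $Q = (I+D)V H^{-1}$ (using $\|D\|_2<1$ to guarantee $(I+D)V$ has full column rank, hence $H$ is invertible). Then $E := Q - V = (I+D)V H^{-1} - V = DVH^{-1} + V(H^{-1}-I)$.

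Next I would bound $\|E\|_2$. The matrix $M := (I+D)^*(I+D)$ is Hermitian with $\|M - I\|_2 = \|D + D^* + D^*D\|_2 \le 2\|D\|_2 + \|D\|_2^2 \le 3\|D\|_2$ (alternatively, eigenvalues of $M$ lie in $[(1-\|D\|_2)^2,(1+\|D\|_2)^2]$). Since $V^*V=I$, the matrix $V^*MV$ is Hermitian with $\|V^*MV - I\|_2 = \|V^*(M-I)V\|_2 \le \|M-I\|_2$, and its eigenvalues also lie in $[(1-\|D\|_2)^2,(1+\|D\|_2)^2]$. Hence $H = (V^*MV)^{1/2}$ has eigenvalues in $[1-\|D\|_2, 1+\|D\|_2]$, so $\|H-I\|_2 \le \|D\|_2$ and $\|H^{-1}\|_2 \le (1-\|D\|_2)^{-1}$, giving $\|H^{-1}-I\|_2 = \|H^{-1}(I-H)\|_2 \le \|D\|_2/(1-\|D\|_2)$. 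Combining, $\|E\|_2 \le \|D\|_2 \|H^{-1}\|_2 + \|H^{-1}-I\|_2 \le \|D\|_2/(1-\|D\|_2) + \|D\|_2/(1-\|D\|_2) = 2\|D\|_2/(1-\|D\|_2)$.

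Then I would expand $Q^*AQ - V^*AV = E^*AV + V^*AE + E^*AE = E^*AQ + V^*AE$ (regrouping $E^*AV + E^*AE = E^*A(V+E) = E^*AQ$). Taking norms, using $\|Q\|_2 = \|V\|_2 = 1$ (both have orthonormal columns) and $\|E\|_2 \le 2\|D\|_2/(1-\|D\|_2)$, we get $\|Q^*AQ - V^*AV\|_2 \le \|E\|_2\|A\|_2\|Q\|_2 + \|V\|_2\|A\|_2\|E\|_2 \le 2\|A\|_2\|E\|_2 \le 4\|A\|_2\|D\|_2/(1-\|D\|_2)$, which is exactly the claimed bound.

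The only mildly delicate point — and the main obstacle if any — is getting clean eigenvalue bounds on $H$; everything reduces to the observation that $H^2 = V^*(I+D)^*(I+D)V$ is a compression of a Hermitian matrix whose spectrum is pinched between $(1\pm\|D\|_2)^2$, so by the min–max characterization the compression has spectrum in the same interval and the positive square root lands in $[1-\|D\|_2,1+\|D\|_2]$. With that in hand the rest is routine triangle-inequality bookkeeping, and one should double-check that the constant works out to $4$ rather than something slightly larger (the split $\|E\|_2 \le 2\|D\|_2/(1-\|D\|_2)$ together with the factor $2$ from the two-term expansion is exactly what produces it).
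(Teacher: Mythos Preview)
Your proof is correct and follows essentially the same approach as the paper: write $Q=(I+D)V H^{-1}$ with $H=(V^*(I+D)^*(I+D)V)^{1/2}$, bound $\|Q-V\|_2\le 2\|D\|_2/(1-\|D\|_2)$ via the spectral bounds on $H$, and then telescope $Q^*AQ-V^*AV$ into two terms each contributing $\|A\|_2\|E\|_2$. The only cosmetic differences are the sign convention for $E$ and the algebraic split of $E$ (you use $DVH^{-1}+V(H^{-1}-I)$ where the paper uses $-DV+(I+D)V(I-H^{-1})$), but both routes land on the same bound $\|E\|_2\le 2\|D\|_2/(1-\|D\|_2)$.
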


\begin{proof}
Because $\|D\|_2<1$, the matrix $(I+D)V$ has full column rank, and so 
\[Q = (I+D)V [V^*(I+D^*) (I+D) V]^{-1/2} \in \mathbb{C}^{m \times n}\] in the polar decomposition of $(I+D)V$ is unique \cite[Theorem 7.3.1]{horn2012matrix}. Letting $E = V- Q$,
\begin{align*} \| V^* A V - Q^* A Q \|_2 &= \|E^*AV + Q^*AE \|_2 \\ &\le  \|E\|_2 \|A \|_2 \|V\|_2 + \|Q\|_2 \|A\|_2 \|E\|_2 \\ &= 2 \|E\|_2 \|A \|_2 .
\end{align*}
What remains is to bound $\|E\|_2$. Using a similar technique,
\begin{align*}
\|E\|_2&= \|-DV + (I+D)V\big(I - [V^* (I+D^*) (I+D) V]^{-1/2}\big)\|_2 \\
&\le \|D\|_2 + (1 + \|D\|_2) \|I - [V^* (I+D^*) (I+D) V]^{-1/2}\|_2.
\end{align*}
The matrix $[V^* (I+D^*) (I+D) V]$ has singular values in the interval $[(1-\|D\|_2)^2,(1+\|D\|_2)^2]$, and so 
\[ \|I - [V^* (I+D^*) (I+D) V]^{-1/2}\|_2 \le \frac{1}{1-\|D\|_2}-1 = \frac{\|D\|_2}{1-\|D\|_2}.\]
Combining all these bounds, we obtain
\[\| V^* A V - Q^* A Q \|_2 \le 2 \|A\|_2\bigg( \|D\|_2 + \frac{(1+ \|D\|_2) \|D\|_2}{1-\|D\|_2}\bigg) 
= \frac{4 \|A\|_2  \|D\|_2}{1-\|D\|_2}.\]
\end{proof}

\begin{theorem}\label{thm:norm_lower}
Let $n = \ell \, m^2$, $\ell, m \in \mathbb{N}$, $m \ge 15$, $\ell \ge 4800 \, m^2  \ln m $, $\bm 1 = (1,\ldots,1)^T \in \mathbb{C}^{\ell}$, $\bm r = \tfrac{1}{\sqrt{m}} (\sqrt{1},\sqrt{2},\ldots,\sqrt{m})^T \in \mathbb{C}^m$, $\omega = \exp\big(\frac{2 \pi i}{m}\big)$, $\bm \theta = (\omega,\omega^2,\ldots,\omega^m)^T \in \mathbb{C}^{m}$, and $A = \mathrm{diag}( \bm r \otimes \bm \theta \otimes \bm 1) \in \mathbb{C}^{n\times n}$. If  $\bm b \sim \mathrm{Unif}(\mathbb{S}^{n-1})$, then
\[ \Prob \bigg[ \frac{d\big(W(H_{m}), \partial W(A) \big)}{\mathrm{diam}(W(A)) } \ge \frac{1}{60 m}  \bigg] \ge 1 - \frac{2}{m} . \]
\end{theorem}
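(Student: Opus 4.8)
The plan is to realize $A$ as a tensor-structured normal matrix whose numerical range is the regular $m$-gon inscribed in the unit circle (up to a radial rescaling by $\bm r$), and to show that with high probability $H_m$ is a small perturbation of a fixed, easily-analyzed matrix $\widetilde H_m$ whose numerical range stays bounded away from $\partial W(A)$ by a definite multiple of $1/m$. The key observation is that $A = \mathrm{diag}(\bm r \otimes \bm \theta \otimes \bm 1)$ has its eigenvalues clustered into $\ell$-fold repeated groups $\{r_j \omega^k\}$, so the Krylov subspace $\mathcal{K}_m(A,\bm b)$ only "sees" $A$ through a compressed $m^2 \times m^2$ matrix acting on the vector of group-summed components of $\bm b$. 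First I would write $\bm b$ in the eigenbasis, collapse each $\ell$-block to a single scalar weight, and observe that $H_m$ depends only on these collapsed weights (together with the orthonormalization). By the rotational invariance of the uniform distribution on $\mathbb{S}^{n-1}$, the collapsed weight vector, suitably normalized, concentrates (via Proposition~\ref{prop:chisq} applied to each $\chi^2_{2\ell}$ block norm) around the uniform vector whenever $\ell \gtrsim m^2 \ln m$ — this is exactly the source of the hypothesis $\ell \ge 4800\, m^2 \ln m$. So with probability $\ge 1 - 2/m$, every block weight is within a $(1 \pm O(1/m))$ factor of its mean.

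The second step is the deterministic perturbation analysis. When all block weights are exactly equal, $H_m$ equals a fixed matrix $\widetilde H_m$: the compression of $A$ onto the Krylov space generated by the "flat" weight vector, which by the tensor structure is the $m \times m$ Arnoldi/companion-type matrix of $\mathrm{diag}(\bm r \otimes \bm\theta)$ against the uniform vector. I would identify $\widetilde H_m$ explicitly (it should be a circulant-like or shifted-companion matrix whose spectrum is the scaled roots of unity, each strictly interior to $W(A)$), and compute $W(\widetilde H_m)$, showing $d(W(\widetilde H_m), \partial W(A)) \ge c/m$ for an explicit constant — this is where Proposition~\ref{prop:power_sum} enters, controlling the ratio of consecutive power sums $\sum j^{k+1}/\sum j^k$ that arise from the weights $\bm r^2 = (1,2,\ldots,m)/m$ and pinning the "center of mass" of $\widetilde H_m$ strictly inside the polygon. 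For the perturbation, I would invoke Proposition~\ref{prop:polar}: the orthonormal Krylov basis $Q_m$ for the perturbed weights is the polar factor of $(I+D)V$ where $V$ is the unperturbed basis and $\|D\|_2 = O(1/m)$ by step one; hence $\|H_m - \widetilde H_m\|_2 = \|Q_m^* A Q_m - V^* A V\|_2 \le 4\|A\|_2\|D\|_2/(1-\|D\|_2) = O(1/m)$. Since numerical ranges are $1$-Lipschitz in the operator norm, $d(W(H_m),\partial W(A)) \ge d(W(\widetilde H_m),\partial W(A)) - O(1/m) \ge 1/(60m)$ after bookkeeping the constants (using $m \ge 15$ to absorb lower-order terms).

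The main obstacle is the second step's explicit computation of $W(\widetilde H_m)$ and the proof that its distance to $\partial W(A)$ is bounded below by an absolute constant times $1/m$ — not merely that it is positive. One has to show simultaneously that (i) the Ritz/numerical-range data coming from the $m$-th order Krylov compression genuinely "rounds the corners" of the $m$-gon by an amount $\Theta(1/m)$, which is an intrinsic property of degree-$(m-1)$ polynomial approximation of the extreme eigenvalue $r_m \omega^k$ against the other $r_j\omega^{k'}$, and (ii) the radial weighting $\bm r$ does not accidentally push the compressed numerical range back out toward the boundary. Handling (i) rigorously likely requires an argument dual to the upper-bound polynomial construction: no degree-$(m-1)$ polynomial can be simultaneously large at $r_m\omega^k$ and uniformly tiny on the remaining $m^2 - O(m)$ eigenvalues, because the relevant nodes are too densely packed on arcs — this is where the $1/m$ (rather than $1/m^2$) scaling is forced, since the eigenvalues here fill a two-dimensional region rather than lying on the circle. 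I would expect to quote or adapt the dual extremal-polynomial estimates already used for the lower bounds in \cite{urschel2021uniform}, combined with a direct diagonalization of $\widetilde H_m$ exploiting its circulant block structure in the $\bm\theta$ variable.
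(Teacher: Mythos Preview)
Your outer architecture matches the paper exactly: collapse the $\ell$-fold repeated eigenvalues to pass to $\hat A=\mathrm{diag}(\bm r\otimes\bm\theta)\in\mathbb{C}^{m^2\times m^2}$ with weight vector $\hat{\bm b}$, use Proposition~\ref{prop:chisq} on the $\chi^2_{2\ell}$ block norms to show $\hat{\bm b}$ is within $O(1/m)$ of the flat vector $\tilde{\bm b}=\sqrt{\ell}\,\bm 1$ with probability $\ge 1-2/m$, and then transfer the numerical-range bound from $\widetilde H_m$ to $\hat H_m$ via Proposition~\ref{prop:polar}. All of that is correct.

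The gap is precisely where you flag the ``main obstacle,'' and your proposed attack on it is both wrong in its prediction and far heavier than what is needed. You conjecture that $\widetilde H_m$ is ``circulant-like'' with ``spectrum the scaled roots of unity,'' and you plan to bound $d(W(\widetilde H_m),\partial W(A))$ by a dual extremal-polynomial argument. In fact the paper's point is that the Krylov vectors $\hat A^j\tilde{\bm b}$ are already \emph{orthogonal}: a direct computation gives
\[
\langle \hat A^j\tilde{\bm b},\hat A^k\tilde{\bm b}\rangle=\ell\Big(\sum_{p=1}^m e^{2\pi i(j-k)p/m}\Big)\Big(\sum_{q=1}^m (q/m)^{(j+k)/2}\Big),
\]
and the first factor vanishes for $j\neq k$. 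Hence $\widetilde H_m$ is not circulant at all but a \emph{nilpotent shift}: its only nonzero entries are the sub-diagonal $\widetilde H_{k+1,k}=\|\hat A^k\tilde{\bm b}\|/\|\hat A^{k-1}\tilde{\bm b}\|=\big(\sum_q q^k/(m\sum_q q^{k-1})\big)^{1/2}$. Proposition~\ref{prop:power_sum} enters exactly here, bounding each sub-diagonal entry by $(1-\tfrac{1}{(e+1)m})^{1/2}$; it is not a ``center of mass'' statement. The numerical radius of $\widetilde H_m$ then follows in one line from Gershgorin applied to $(e^{i\phi}\widetilde H_m+e^{-i\phi}\widetilde H_m^*)/2$, which is tridiagonal with zero diagonal. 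No polynomial duality, no block-circulant diagonalization, and no appeal to \cite{urschel2021uniform} is needed for this step. Once you see the orthogonality of the Krylov vectors, the entire deterministic computation is elementary.
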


\begin{proof}
Without loss of generality, we may take $\bm b \sim \compnormvec$ instead of $\bm b \sim \mathrm{Unif}(\mathbb{S}^{n-1})$. Let \[\hat A = \mathrm{diag}(\bm r \otimes \bm \theta) \in \mathbb{C}^{m^2 \times m^2},\] $ \hat {\bm b} \in \mathbb{R}^{m^2}$ be such that 
\[\hat {\bm b}_j= \sqrt{\sum_{k = 1}^{\ell} |\bm b_{j+(k-1)m^2}|^2},\] and $\hat H_m = \hat Q_m^* \hat A \hat Q_m \in \mathbb{C}^{m \times m}$ denote the orthogonal projection of $\hat A$ onto $\mathcal{K}_{m}(\hat A,\hat {\bm b})$ in the basis of $\hat Q_m$. We note that, because $A$ and $\hat A$ have $m^2$ distinct eigenvalues and $\bm b \sim \compnormvec$, $\mathcal{K}_{m}(A,\bm b)$ and $\mathcal{K}_{m}(\hat A,\hat{\bm b})$ both have dimension $m$ with probability one, and so $\hat Q_m$ is well-defined. By Equation \ref{eqn:normal_RQ}, $H_m$, the orthogonal projection of $A$ onto $\mathcal{K}_{m}(A,b)$, and $\hat H_m$ have the same Rayleigh quotient for each polynomial $p \in \mathcal{P}_{m-1}$, and so have the same numerical range. Therefore, we may work with $\hat H_m$ instead of $H_m$. Note that two times the squared entries of $\hat {\bm b}$ are independent chi-squared variables with $2\ell$ degrees of freedom each. 

Now, let $\tilde {\bm b} = \sqrt{\ell}\,  \bm{1}\in \mathbb{C}^{m^2}$ and $\tilde H_m = \tilde Q_m^* \hat A \tilde Q_m$ be the orthogonal projection of $\hat A$ onto $\mathcal{K}_{m}(\hat A,\tilde {\bm b})$ (i.e., $\hat H_m$ is a ``noisy" version of $\tilde H_m$). The benefit of $\tilde H_m$ is that the vectors $\hat A^j \tilde {\bm b}$ are orthogonal and have easily computable norms:
  \begin{align*}    \langle \hat A^j \tilde {\bm b}, \hat A^k \tilde {\bm b}\rangle &= \ell \sum_{p=1}^{m} \sum_{q=1}^{m}  \bm{\theta}_p^{j}\bm{r}_q^j \bm{\theta}_p^{-k}\bm{r}_q^k  \\ &=  \ell \sum_{p=1}^m e^{2 \pi i(j-k)\frac{p}{m}} \sum_{q=1}^m\left(\frac{q}{m}\right)^{\frac{j+k}{2}} \\
  &= \begin{cases}
            \ell m \sum_{q=1}^{m} \left(\frac{q}{m} \right)^{j} &  \text{ if } j=k \\
            0 & \text{ otherwise }.
        \end{cases}
        \end{align*}
Therefore, $[\tilde H_m]_{jk} = 0$ for $j \ne k+1$, and, by Proposition \ref{prop:power_sum},
\[\tilde H_{k+1,k} = \frac{\|\hat A^{k} \tilde {\bm b}\|}{\|\hat A^{k-1} \tilde {\bm b}\|} = \left(\frac{ \sum_{q=1}^{m} q^{k} }{m \sum_{q=1}^{m} q^{k-1} }\right)^{1/2} \le \left(1- \frac{1}{(e+1)m}\right)^{1/2} .\]
By Gershgorin's Disk Theorem, the matrix $(e^{i \phi} \tilde H_m + e^{- i \phi} \tilde H_m^*)/2$ has eigenvalues all bounded by $\big(1- \tfrac{1}{(e+1)m}\big)^{1/2}$ in modulus for all $\phi \in [0,2\pi)$, implying that the maximum modulus of an element of $W(\tilde H_m)$ is at most $\big(1- \tfrac{1}{(e+1)m}\big)^{1/2}$. Now, let us consider how different the situation can be for $\hat H_m$.  For $\hat {\bm b}$, we note that the vectors $\hat A^j \hat {\bm b}$, $j = 0,\ldots,m-1$, have the same span as the columns of $(I+D) \tilde Q_m$, where $D$ is diagonal with $D_{jj} = \frac{\hat  {\bm{b}}_j}{\sqrt{\ell}} - 1$.  By Proposition \ref{prop:chisq} with $t:= \sqrt{12 \ell^{-1} \ln m}<1$ and a union bound,
\[\mathbb{P}\left[ \max_{j =1,\ldots,m^2} |2\hat{\bm b}_j^2 - 2\ell| \ge \sqrt{48 \ell \ln m}  \right] \le \frac{2}{m}.\]
Therefore, for $\ell \ge 4800 m^2 \ln m$,
\[ \|D\|_2 \le 1-  \sqrt{1 - \sqrt{\frac{12 \ln m}{\ell} }} \le 1-  \sqrt{1 - \frac{1}{20m}}   \]
with probability at least $1 -2/m$. By Proposition \ref{prop:polar}, applied to $\tilde Q_m$ and $(I+D)\tilde Q_m$ (with $V:= \tilde Q_m$),
\[\|\tilde H_m - Q^* \hat A Q\|_2 = \| \tilde Q_m^* \hat A \tilde Q_m - Q^* \hat A Q\|_2  \le \frac{4\left(1-  \sqrt{1 - \frac{1}{20m}}  \right)}{\sqrt{1 - \frac{1}{20m}}},\]
where $Q \in \mathbb{C}^{m^2 \times m}$ is as in Proposition \ref{prop:polar}, i.e., $Q$ is the matrix with orthonormal columns in the polar decomposition of $(I+D)\tilde Q_m$. This means that $Q$ and $\hat Q_m$ have the same column space, and so $Q = \hat Q_m U$ for some unitary matrix $U \in \mathbb{C}^{m \times m}$, and 
\[\|\tilde H_m - Q^* \hat A Q\|_2 = \|\tilde H_m - U^* \hat Q_m^* \hat A \hat Q_m U\|_2 = \| \tilde H_m - U^* \hat H_m U \|_2.\]
Therefore, the largest modulus of an element of $W(\hat H_m)$ is at most 
\[f(m):=\left(1- \frac{1}{(e+1)m}\right)^{1/2} + \frac{4\left(1-  \sqrt{1 - \frac{1}{20m}}  \right)}{\sqrt{1 - \frac{1}{20m}}} .\]
The function $(1-f(m))m$ is monotonically decreasing, with limit $(4-e)/(10(1+e))>1/30$ as $m \rightarrow \infty$. Therefore, $f(m)< 1 - 1/(30m)$, completing the proof.
\end{proof}

\begin{remark}\label{rem:lower}
The constants used in the lower bound of $1/m$ in Theorem \ref{thm:norm_lower} are of moderate size. A similar theorem, with a lower bound of $1/m$ just for Hausdorff distance (instead of distance to boundary $\partial W(A)$), can be produced with much milder requirements on the size of $n$: essentially, just take the same eigenvalues as in Theorem \ref{thm:norm_lower}, give $\lambda = 1$ very high multiplicity, and apply the same technique used in \cite[Lemma 3.5]{urschel2021uniform}. However, we note that the requirements in Theorem \ref{thm:norm_lower} are purely for the sake of producing a clean and concise mathematical proof; practice shows that this phenomenon actually occurs for relatively small $n$. See Figure \ref{fig:lower}.
\end{remark}

\begin{figure}
\subfigure[Zoomed in numerical range with marked eigenvalues]{\includegraphics[scale=0.45]{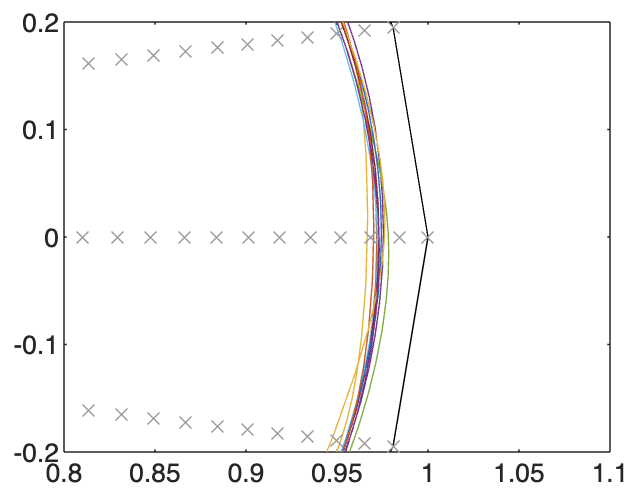}}
\subfigure[Error as $m$ increases]{\includegraphics[scale=0.45]{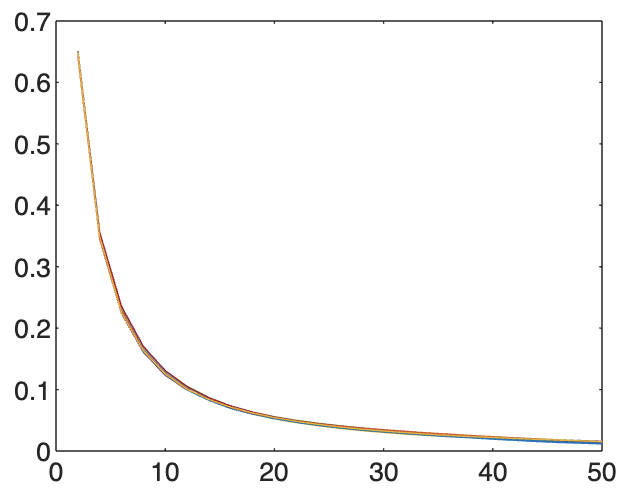}}
\subfigure[Error normalized times $m$]{\includegraphics[scale=0.45]{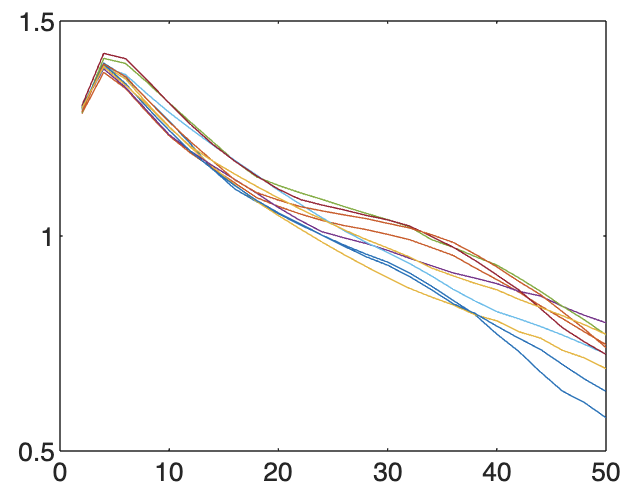}} \\
\subfigure[Zoomed in numerical range]{\includegraphics[scale=0.46]{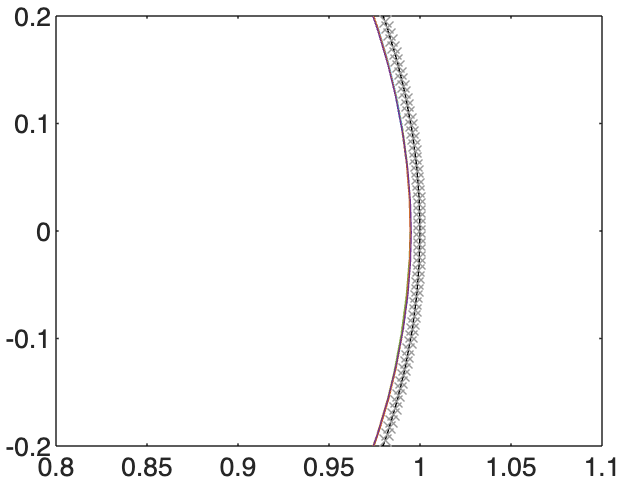}}
\subfigure[Error as $m$ increases]{\includegraphics[scale=0.38]{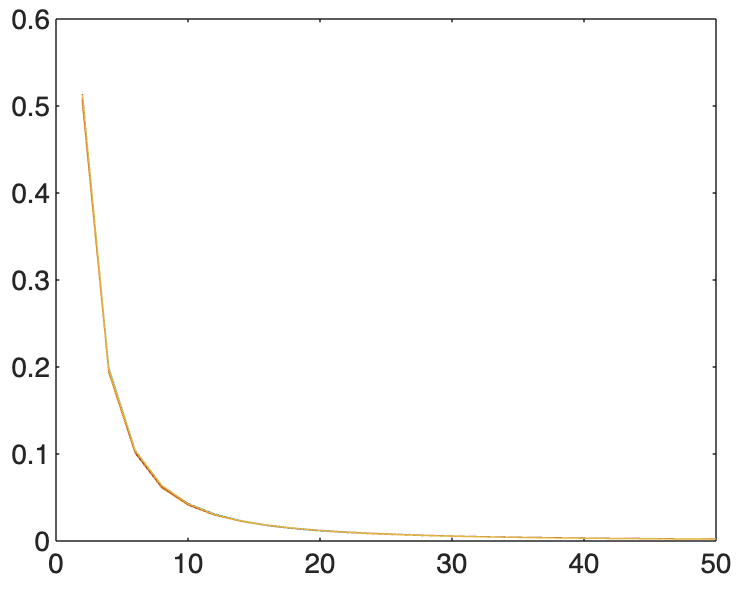}}
\subfigure[Error normalized times $m^2$]{\includegraphics[scale=0.38]{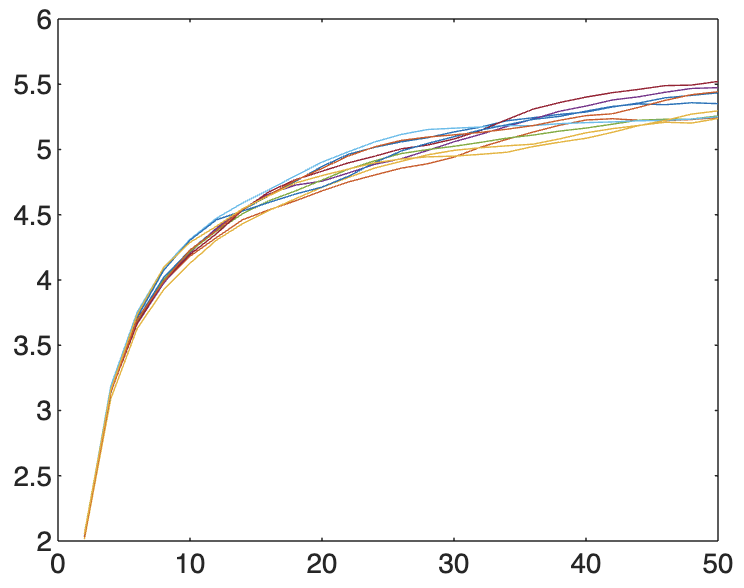}}

\caption{Numerical range estimates of $A$ from Theorem \ref{thm:norm_lower} (top row) and Theorem \ref{thm:circle_upper} (bottom row). Whereas $n$ changes as a function of $m$ to simplify the proof, the essence of the statement holds for each matrix individually. Here we fix $A \in \mathbb{C}^{n \times n}$ with $n=10240$ with eigenvalue multiplicity $10$ and run Arnoldi for $m$ up to 50. In Subfigures (a) and (d), we plot part of the numerical range of $A$ along with the estimated numerical range for 10 random starting vectors $\bm{b}$ and $m=50$. In Subfigures (b) and (e), we plot the Hausdorff distance between the estimated and actual numerical ranges as $m$ increases. To better see the $1/m$ and $1/m^2$ behavior respectively, we multiply the error by $m$ in Subfigure (c) and by $m^2$ in Subfigure (f).}
\label{fig:lower}
\end{figure}

\subsection{Spectrum on the Circle}\label{sub:num_circle}

Next, we consider the situation where the spectrum of $A$ lies on the complex unit circle $\mathbb{S}^0$. A common example of such matrices is the unitary group. We provide an improved upper bound of the form $\tfrac{\ln^2 n}{m^2}$ and a nearly matching lower bound of the form $\tfrac{1}{m^2}$.

\begin{theorem}\label{thm:circle_upper}
Let $m,n \in \mathbb{N}$, $\alpha>0$, and $A \in \mathbb{C}^{n \times n}$ be normal, with its spectrum on $\mathbb{S}^0$ and an orthonormal eigenbasis $\{ \bm \varphi^{(1)},\ldots,\bm \varphi^{(n)}\}$. Then
\[d_{H}\big(\mathrm{Re}(e^{i \theta} W(H_{2m+1}),\mathrm{Re}(e^{i \theta} W(A)) \big) \le \frac{1}{16 m^2} \ln^2 \left( \frac{64 e m^2 \| \bm b \|^2}{\min_{j \in [n]} |\langle \bm b , \bm \varphi^{(j)} \rangle |^2}  \right) \quad \text{for all} \; \theta \in [0,2\pi).\]
In particular, if $\bm b \sim \mathrm{Unif}(\mathbb{S}^{n-1})$, then
\[\Prob \bigg[d_{H}\big(\mathrm{Re}(e^{i \theta} W(H_{2m+1}),\mathrm{Re}(e^{i \theta} W(A)) \big) \le \frac{(2+\alpha)^2  \ln^2 n }{16 m^2} \quad \text{for all} \; \theta \in [0,2\pi) \bigg] \ge 1 - \frac{64 e^2 m^2}{n^{\alpha}}. \]
\end{theorem}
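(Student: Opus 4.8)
The plan is to run the same argument as in Theorem \ref{thm:main}, but replace the half–annulus polynomial $\widetilde P_{m,\delta}$ of Proposition \ref{prop:remez} with the unit–circle polynomial $\widehat P_{m,c_1,c_2,\delta}$ of Proposition \ref{prop:circle_poly}. The exponential gain there is $\exp(2m\sqrt{\delta})$ rather than $\exp(m\delta/12)$, and this is precisely what turns the $1/m$ estimate into a $1/m^2$ estimate. The proof has three pieces: a convex–geometry reduction of the Hausdorff-of-widths statement to a single-eigenvalue estimate, the polynomial/Rayleigh-quotient computation, and the probabilistic control of $\min_j|\langle\bm b,\bm\varphi^{(j)}\rangle|$.

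For the reduction: since $A$ is normal with $\Lambda(A)\subset\mathbb{S}^0$ we have $W(A)=\mathrm{conv}(\Lambda(A))$, so for each fixed $\theta$ the set $\mathrm{Re}\big(e^{i\theta}W(A)\big)$ is the interval $\big[\min_{\lambda\in\Lambda(A)}\mathrm{Re}(e^{i\theta}\lambda),\,\max_{\lambda\in\Lambda(A)}\mathrm{Re}(e^{i\theta}\lambda)\big]$, while $\mathrm{Re}\big(e^{i\theta}W(H_{2m+1})\big)$ is a subinterval of it because $W(H_{2m+1})\subset W(A)$. Hence the Hausdorff distance between these two intervals equals $\max$ of the two endpoint gaps, and it suffices to show that $\max\mathrm{Re}\big(e^{i\theta}W(H_{2m+1})\big)\ge\max_{\lambda}\mathrm{Re}(e^{i\theta}\lambda)-\varepsilon$ and $\min\mathrm{Re}\big(e^{i\theta}W(H_{2m+1})\big)\le\min_{\lambda}\mathrm{Re}(e^{i\theta}\lambda)+\varepsilon$, with $\varepsilon=\tfrac{1}{16m^2}\ln^2(\cdots)$. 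Replacing $A$ by $e^{i\theta}A$ (still normal, still spectrum on $\mathbb{S}^0$, same eigenvectors, same $\bm b$) reduces this to: \emph{for the eigenvalue $\lambda$ of largest real part, there is $\bm x\in\mathcal{K}_{2m+1}(A,\bm b)$ with $\mathrm{Re}(\bm x^*A\bm x/\bm x^*\bm x)\ge\mathrm{Re}(\lambda)-\varepsilon$}, and symmetrically for the minimum; no union bound over $\theta$ is needed because the $\delta$ chosen below depends only on $\min_j|\langle\bm b,\bm\varphi^{(j)}\rangle|$, hence is uniform over all extreme eigenvalues.

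For the polynomial step, rotate once more so $\lambda=1$, write $\bm\alpha=V^{-1}\bm b$ with $V$ unitary, and use the real part of Equation \ref{eqn:normal_RQ}: for $\bm x=p(A)\bm b$,
\[ 1-\mathrm{Re}\!\left(\frac{\bm x^*A\bm x}{\bm x^*\bm x}\right)=\frac{\sum_j|\bm\alpha_j|^2\,|p(\Lambda_j)|^2\,(1-\mathrm{Re}\,\Lambda_j)}{\sum_j|\bm\alpha_j|^2\,|p(\Lambda_j)|^2}. \]
Take $p=\widehat P_{m,-1,1,\delta}\in\mathcal{P}_{2m}$, so that $\bm x\in\mathcal{K}_{2m+1}(A,\bm b)$. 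Split the numerator at $\mathrm{Re}\,\Lambda_j=1-2\delta$ (the cutoff of Proposition \ref{prop:circle_poly} with $c_1=-1,c_2=1$): the eigenvalues with $\mathrm{Re}\,\Lambda_j>1-2\delta$ contribute at most $2\delta$ after dividing by the denominator since $1-\mathrm{Re}\,\Lambda_j\le 2\delta$ there; for the remaining eigenvalues $|p(\Lambda_j)|\le 1$, while the denominator is at least $|\bm\alpha_{j_0}|^2|p(1)|^2\ge\tfrac14|\bm\alpha_{j_0}|^2\exp(4m\sqrt{\delta})$, where $j_0$ indexes $\lambda=1$ and $\bm\alpha_{j_0}=\langle\bm b,\bm\varphi^{(j_0)}\rangle$. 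Using $\|\bm\alpha\|_2=\|\bm b\|_2$ and $1-\mathrm{Re}\,\Lambda_j\le 2$, one gets $1-\mathrm{Re}(\bm x^*A\bm x/\bm x^*\bm x)\le 8\|\bm b\|_2^2|\bm\alpha_{j_0}|^{-2}\exp(-4m\sqrt{\delta})+2\delta$. Choosing $\delta$ of order $m^{-2}\ln^2\!\big(m^2\|\bm b\|_2^2/\min_j|\langle\bm b,\bm\varphi^{(j)}\rangle|^2\big)$ (and noting the claim is trivial if $\delta\ge1$) kills the exponential term against the logarithm in $\delta$, and after optimizing the numerical constants the bound becomes exactly $\varepsilon=\tfrac{1}{16m^2}\ln^2\!\big(64em^2\|\bm b\|_2^2/\min_j|\langle\bm b,\bm\varphi^{(j)}\rangle|^2\big)$; since this $\delta$ uses $\min_j$, the same $\varepsilon$ serves every extreme eigenvalue, proving the deterministic inequality for all $\theta$.

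Finally, for the probabilistic statement, apply Proposition \ref{prop:perturbed_basis} to the unitary matrix $M=[\bm\varphi^{(1)}\,\cdots\,\bm\varphi^{(n)}]^*$, so $\kappa(M)=1$ and $\|M\bm b\|_2=\|\bm b\|_2=1$, with $t=64em^2/n^\alpha$ (if $t\ge 1$ the asserted probability bound is vacuous, so assume $t<1$). This gives $\min_j|\langle\bm b,\bm\varphi^{(j)}\rangle|^2\ge t/n^2$ with probability at least $1-et=1-64e^2m^2/n^\alpha$, whence the argument of the logarithm is at most $n^{2+\alpha}$ and $\varepsilon\le(2+\alpha)^2\ln^2 n/(16m^2)$. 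I expect the genuinely fiddly part to be not the strategy but the bookkeeping in the polynomial step: keeping the split point, the $\exp(2m\sqrt\delta)$ factor, and the choice of $\delta$ consistent so that the final constant is literally $\tfrac{1}{16}$ and the logarithm argument is literally $64em^2\|\bm b\|_2^2/\min_j|\langle\bm b,\bm\varphi^{(j)}\rangle|^2$ — the underlying estimate only "wants" to give $O(\ln^2 n/m^2)$, and lining up the constants (together with checking that $W(A)=\mathrm{conv}(\Lambda(A))$ lets a single $\delta$ handle every direction) is where the care goes.
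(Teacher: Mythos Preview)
Your overall strategy matches the paper's: reduce the width bound in each direction to a single extreme eigenvalue, apply the circle polynomial $\widehat P_{m,c_1,c_2,\delta}$ of Proposition~\ref{prop:circle_poly} with its $\exp(2m\sqrt{\delta})$ gain, split near/far, optimize $\delta$, and finish with Proposition~\ref{prop:perturbed_basis}. The probabilistic step and the convex-geometry reduction are fine.

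The problematic step is ``rotate once more so $\lambda=1$'' and then take $c_1=-1,\ c_2=1$. After the first rotation by $e^{i\theta}$, the extreme eigenvalue $\lambda$ has $\mathrm{Re}(\lambda)=c_2$, but in general $\lambda\neq 1$. Your second rotation by $\bar\lambda$ replaces the quantity you must control, namely $\mathrm{Re}(\lambda)-\mathrm{Re}(\bm x^*A\bm x/\bm x^*\bm x)$, by $1-\mathrm{Re}(\bar\lambda\,\bm x^*A\bm x/\bm x^*\bm x)$; these are \emph{not} equal. Concretely, writing $w=\bm x^*A\bm x/\bm x^*\bm x$ and $\lambda=e^{i\psi}$, your argument yields $\mathrm{Re}(e^{-i\psi}w)\ge 1-\varepsilon$, whereas you need $\mathrm{Re}(w)\ge\cos\psi-\varepsilon$. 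From $|w|\le 1$ and $\mathrm{Re}(e^{-i\psi}w)\ge 1-\varepsilon$ one only gets $|\lambda-w|\le\sqrt{2\varepsilon}$, hence $\mathrm{Re}(\lambda-w)\le\sqrt{2\varepsilon}$, which degrades the rate back to $1/m$. Equivalently, as $\theta$ ranges over $[0,2\pi)$ the composite angle $\theta-\arg\lambda(\theta)$ you actually handle only runs through the discrete set $\{-\arg\lambda_j\}$, so the ``for all $\theta$'' conclusion is not established.

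The fix is exactly what the paper does: do \emph{not} perform a second rotation. Keep the projection direction fixed, set $[c_1,c_2]=\mathrm{Re}(e^{i\theta}W(A))$, and use $\widehat P_{m,c_1,c_2,\delta}$ with those $c_1,c_2$. Proposition~\ref{prop:circle_poly} then makes the polynomial large precisely at the eigenvalue with $\mathrm{Re}(e^{i\theta}\lambda)=c_2$ and small on $\{\lambda_j:\mathrm{Re}(e^{i\theta}(c_2-\lambda_j))>\delta(c_2-c_1)\}$, so the near/far split lines up with the gap you are bounding. With this correction the rest of your argument (the split, the choice of $\delta$, and the application of Proposition~\ref{prop:perturbed_basis} with $t=64e m^2/n^\alpha$) goes through and coincides with the paper's proof.
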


\begin{proof}
Our general proof technique is as follows. First, we produce a bound on the difference between the orthogonal projections of $W(A)$ and $W(H_{2m+1})$ onto an arbitrary line. Then, we lower bound the magnitude of the orthogonal projection of our vector $\bm b$ onto eigenvectors.

Without loss of generality, consider the projection of $W(H_m)$ onto the real line. Let $\mathrm{Re}(W(A)) = [c_1,c_2]$, $\lambda$ be an eigenvalue of $A$ with largest real part, and $\bm \varphi$ be a corresponding eigenvector. Similar to the proof of Lemma \ref{lm:single_eig}, we have
\begin{align*}
\min_{x \in \mathcal{K}_{2m+1}(A,b)} \mathrm{Re}\left( \frac{\bm x^* (c_2 I-A) \bm x}{(c_2-c_1)\bm x^* \bm x} \right) &= \min_{P \in \mathcal{P}_{2m}} \frac{\sum_{j=1}^n |\bm \alpha_j|^2 |P(\lambda_j)|^2 \mathrm{Re}(c_2-\lambda_j)}{(c_2-c_1)\sum_{j=1}^n |\bm \alpha_j|^2 |P(\lambda_j)|^2} \\
&\le \delta + \min_{P \in \mathcal{P}_{2m}} \frac{\sum_{\mathrm{Re}(c_2-\lambda_j)> \delta(c_2-c_1)} |\bm \alpha_j|^2 |P(\lambda_j)|^2}{|\langle \bm b, \bm \varphi \rangle|^2 |P(\lambda)|^2}.
\end{align*}
Consider the choice $\widehat P_{m,c_1,c_2,\delta}(z)$, defined in Proposition \ref{prop:circle_poly}, with $\delta = \frac{1}{4m^2} \ln^2 \left( \frac{64 m^2 \| \bm b \|^2}{|\langle \bm b, \bm \varphi \rangle |^2} \right)$. Then
\[ \delta +  \min_{P \in \mathcal{P}_{2m}} \frac{\sum_{\mathrm{Re}(c_2-\lambda_i)> \delta(c_2-c_1)} |\alpha_i|^2 |P(\lambda_i)|^2}{|\langle \bm b, \bm \varphi \rangle|^2 |P(\lambda)|^2}\le  \delta + \frac{\|\bm b\|^2}{|\langle \bm b, \bm \varphi \rangle|^2 \frac{1}{4}e^{4m \sqrt{\delta}}} \le \frac{1}{16 m^2} \ln^2 \left( \frac{64 e m^2 \| \bm b \|^2}{|\langle \bm b , \bm \varphi \rangle |^2}  \right) .\]
By Proposition \ref{prop:perturbed_basis} with $M = [\bm \varphi^{(1)} \ldots \, \bm \varphi^{(n)} ]$ and $t = 64 e m^2/n^{\alpha}$, $\min_{j \in [n]} | \langle  \bm b , \bm \varphi^{(j)} \rangle|^2 \ge 64 e m^2 /n^{2+\alpha}$ with probability at least $1 - 64 e^2 m^2 /n^\alpha $. Therefore, with probability at least $1- 64 e^2 m^2 n^{-\alpha}$, the relative difference in projection between $W(A)$ and $W(H_m)$ on any line is at most $\tfrac{(2+\alpha)^2}{16} m^{-2}  \ln^2 n$ at either endpoint.
\end{proof}

\begin{remark}
Note that the statement of Theorem \ref{thm:circle_upper} is reminiscent of arguably the most popular technique for estimating the numerical range of a matrix, which consists of choosing angles $\theta_1, \ldots, \theta_\ell$, and computing the largest and smallest eigenpairs of $(e^{i\theta_j} A + e^{- i \theta_j} A^*)/2$. This algorithm was proposed by Johnson \cite{johnson1978numerical} and is implemented in the $\mathrm{fv}$ function in the Matrix Computation Toolbox \cite{higham2002matrix} and the $\mathrm{fov}$ function in Chebfun \cite{driscoll2014chebfun}, using the QR algorithm to estimate extreme eigenvalues. In addition, Braconnier and Higham created a variant using a version of the Lanczos method, which exploits the fact that only the largest and smallest eigenpairs are needed \cite{braconnier1996computing}. As mentioned in the introduction, using a Krylov subspace of dimension $m$, one could guarantee an approximation of the form $\tfrac{\ln^2 n}{m^2}$ for each angle. When $\Lambda(A) \subset \mathbb{S}^0$, Theorem \ref{thm:circle_upper} proves that computing the numerical range of $H_m$ instead provides this same approximation quality for every angle simultaneously (i.e., without having to build a different Krylov subspace for each angle).
\end{remark}

We note that the Hausdorff distance for a projection onto any line also immediately implies the same bound for the Hausdorff distance in the plane.

\begin{corollary}\label{cor:circle_upper}
Let $n \in \mathbb{N}$, and $A \in \mathbb{C}^{n \times n}$ be normal with its spectrum on $\mathbb{S}^0$. Then
\[\Prob_{\bm b \sim \mathrm{Unif}(\mathbb{S}^{n-1})} \bigg[d_{H}\big(W(H_{2m+1}), W(A) \big) \le \frac{(2+\alpha)^2  \ln^2 n }{16 m^2} \bigg] \ge 1 - \frac{64 e^2 m^2}{n^{\alpha}}. \]
\end{corollary}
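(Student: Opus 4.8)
The plan is to reduce the planar Hausdorff bound to the per-direction bound already established in Theorem~\ref{thm:circle_upper}, using the containment $W(H_{2m+1})\subseteq W(A)$. The first step is to record this containment: any $\bm y$ yields $\bm y^* H_{2m+1}\bm y/\bm y^*\bm y = (Q_{2m+1}\bm y)^*A(Q_{2m+1}\bm y)/(Q_{2m+1}\bm y)^*(Q_{2m+1}\bm y)\in W(A)$, so $W(H_{2m+1})\subseteq W(A)$ and hence $\tilde d_H(W(H_{2m+1}),W(A))=0$. Consequently it suffices to bound $\tilde d_H(W(A),W(H_{2m+1}))=\sup_{z\in W(A)}d(z,W(H_{2m+1}))$, after which the ``furthermore'' clause of Proposition~\ref{prop:polytopevertices} (or simply the definition of $d_H$) gives the full Hausdorff bound.

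Next I would fix $z\in W(A)$ and let $w\in W(H_{2m+1})$ be its metric projection (both sets are compact and convex). Writing $u=(z-w)/|z-w|=e^{-i\psi}$ when $z\ne w$, the variational inequality for the projection, $\mathrm{Re}\big(\overline{(z-w)}(\zeta-w)\big)\le 0$ for all $\zeta\in W(H_{2m+1})$, says exactly that $\mathrm{Re}(e^{i\psi}w)=\max_{\zeta\in W(H_{2m+1})}\mathrm{Re}(e^{i\psi}\zeta)$, i.e.\ $\mathrm{Re}(e^{i\psi}w)$ is the right endpoint of the interval $\mathrm{Re}(e^{i\psi}W(H_{2m+1}))$; likewise $\mathrm{Re}(e^{i\psi}z)\le\max_{\zeta\in W(A)}\mathrm{Re}(e^{i\psi}\zeta)$, the right endpoint of $\mathrm{Re}(e^{i\psi}W(A))$. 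Therefore
\[ d(z,W(H_{2m+1}))=|z-w|=\mathrm{Re}(e^{i\psi}z)-\mathrm{Re}(e^{i\psi}w)\le d_H\big(\mathrm{Re}(e^{i\psi}W(H_{2m+1})),\mathrm{Re}(e^{i\psi}W(A))\big). \]
This is essentially the classical identity ``Hausdorff distance of convex bodies $=$ sup-norm distance of support functions'' specialized to $W(H_{2m+1})\subseteq W(A)$.

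Finally I would invoke Theorem~\ref{thm:circle_upper}: on the event it describes — which has probability at least $1-64e^2m^2/n^{\alpha}$ — the right-hand side above is at most $\tfrac{(2+\alpha)^2\ln^2 n}{16m^2}$ for every $\psi\in[0,2\pi)$. Taking the supremum over $z\in W(A)$ and combining with $\tilde d_H(W(H_{2m+1}),W(A))=0$ yields $d_H(W(H_{2m+1}),W(A))\le\tfrac{(2+\alpha)^2\ln^2 n}{16m^2}$ on that same event, which is the claim. There is no substantive obstacle here; the only points needing care are matching the projection direction $u=e^{-i\psi}$ to the angle appearing in Theorem~\ref{thm:circle_upper} and observing that the good event and its probability are identical to those in that theorem, so nothing new needs to be estimated.
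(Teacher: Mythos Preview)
Your proposal is correct and follows essentially the same route as the paper: use the inclusion $W(H_{2m+1})\subseteq W(A)$ to reduce to one-sided Hausdorff distance, then for each point convert the distance to $W(H_{2m+1})$ into a difference of support functions in a suitable direction and invoke Theorem~\ref{thm:circle_upper}. The only cosmetic difference is that the paper first reduces to the extreme points of the polytope $W(A)=\mathrm{conv}(\Lambda(A))$ via Proposition~\ref{prop:polytopevertices} before taking the projection line, whereas you treat every $z\in W(A)$ directly; the underlying support-function/hyperplane-separation idea is identical.
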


\begin{proof}
By Proposition \ref{prop:polytopevertices} and the inclusion $W(H_{2m+1}) \subset W(A)$, it suffices to show that, for any extreme point $\lambda$ of $\mathrm{conv}(\Lambda(A))$, the desired bound holds. For an extreme point $\lambda \in \partial W(A)$, consider the line through $\lambda$ and the closest point in $W(H_{2m+1})$. By the convexity of $W(H_{2m+1})$, the distance between $\lambda$ and $W(H_{2m+1})$ is equal to the distance between the projection of both onto this line (this follows from the standard proof of the hyperplane separation theorem, see \cite[pg. 11]{grunbaum1967convex}). Applying Theorem \ref{thm:circle_upper} to this line completes the proof.
\end{proof}

\begin{theorem}\label{thm:circle_lower}
Let $n = k \, \ell $, $k, \ell, m \in \mathbb{N}$, $k \ge m \ge 10$, $\ell \ge 8\, m^4  \ln k $, $\bm 1 = (1,\ldots,1)^T \in \mathbb{C}^{\ell}$, $\omega = \exp\big(\frac{2 \pi i}{k}\big)$, $\bm \theta = (\omega,\omega^2,\ldots,\omega^k)^T \in \mathbb{C}^{k}$, and $A = \mathrm{diag}(\bm \theta \otimes \bm 1) \in \mathbb{C}^{n\times n}$. If $\bm b \sim \mathrm{Unif}(\mathbb{S}^{n-1})$, then
\[ \Prob\bigg[ d\big(W(H_{m}), \partial W(A) \big) \ge \frac{2}{m^2}  \bigg] \ge 1 - \frac{2}{k} . \]
\end{theorem}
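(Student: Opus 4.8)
The plan is to follow the template of Theorem~\ref{thm:norm_lower}: collapse the eigenvalue multiplicities to reduce to a $k\times k$ matrix, identify the ``noiseless'' compression with an explicit matrix whose numerical range is known exactly, and control the deviation due to the randomness of $\bm b$ via Proposition~\ref{prop:polar} and a $\chi^2$ tail bound. By scale invariance of the Krylov subspace we may take $\bm b\sim\compnormvec$. Since $A$ has exactly $k$ distinct eigenvalues, the $k$-th roots of unity, each with multiplicity $\ell$, we set $\hat A=\mathrm{diag}(\bm\theta)\in\mathbb C^{k\times k}$ and $\hat{\bm b}_j=\big(\sum_{c=1}^\ell|\bm b_{j+(c-1)k}|^2\big)^{1/2}$ for $j\in[k]$. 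Exactly as in the proof of Theorem~\ref{thm:norm_lower}, Equation~\ref{eqn:normal_RQ} shows that $H_m$ and the compression $\hat H_m$ of $\hat A$ onto $\mathcal K_m(\hat A,\hat{\bm b})$ have the same Rayleigh quotient for every $p\in\mathcal P_{m-1}$, hence $W(H_m)=W(\hat H_m)$; moreover $2\hat{\bm b}_1^2,\dots,2\hat{\bm b}_k^2$ are i.i.d.\ $\chi^2_{2\ell}$, and $\dim\mathcal K_m(\hat A,\hat{\bm b})=m$ almost surely (using $k\ge m$).

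Next, introduce the noiseless vector $\tilde{\bm b}=\sqrt\ell\,\bm 1\in\mathbb C^k$. The Krylov vectors $\hat A^j\tilde{\bm b}$, $j=0,\dots,m-1$, are the scaled discrete Fourier vectors $\sqrt\ell\,(\omega^{pj})_{p=1}^k$; since $m\le k$ these are mutually orthogonal of common norm $\sqrt{\ell k}$, so the compression $\tilde H_m$ of $\hat A$ onto $\mathcal K_m(\hat A,\tilde{\bm b})$ is \emph{exactly} the $m\times m$ nilpotent down-shift $S_m$. The numerical radius of $S_m$ is $\cos\frac{\pi}{m+1}$, the largest eigenvalue of the tridiagonal Toeplitz matrix $\tfrac12(S_m+S_m^*)$, whose spectrum is $\{\cos\frac{j\pi}{m+1}\}_{j=1}^m$; in particular $W(\tilde H_m)\subseteq\overline D\big(0,\cos\frac{\pi}{m+1}\big)$. (Note that Gershgorin's theorem, which suffices in Theorem~\ref{thm:norm_lower}, only yields numerical radius $\le 1$ here and is far too weak; the exact eigenvalues of $\tfrac12(S_m+S_m^*)$ are what produce the $\Theta(m^{-2})$ gap.)

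To pass from $\tilde H_m$ to $\hat H_m$, write $D=\mathrm{diag}\big(\hat{\bm b}_p/\sqrt\ell-1\big)\in\mathbb C^{k\times k}$, so that $\hat A^j\hat{\bm b}=\sqrt\ell\,(I+D)\,\hat A^j\bm 1$ and hence the columns of $(I+D)\tilde Q_m$ span $\mathcal K_m(\hat A,\hat{\bm b})$; thus $\hat H_m$ is unitarily similar to $Q^*\hat A Q$, where $Q$ is the orthonormal factor in the polar decomposition of $(I+D)\tilde Q_m$. Since $\|\hat A\|_2=1$, Proposition~\ref{prop:polar} (with $V=\tilde Q_m$) gives $\|\tilde H_m-Q^*\hat A Q\|_2\le 4\|D\|_2/(1-\|D\|_2)$, and because the numerical radius is subadditive and dominated by the spectral norm, the numerical radius of $\hat H_m$ is at most $\cos\frac{\pi}{m+1}+4\|D\|_2/(1-\|D\|_2)$. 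Applying Proposition~\ref{prop:chisq} with $t=m^{-2}$ (which lies in $(0,1)$ and satisfies $t^2\ge 8\ell^{-1}\ln k$ thanks to $\ell\ge 8m^4\ln k$) and a union bound over $p\in[k]$, with probability at least $1-2/k$ we have $|\hat{\bm b}_p^2-\ell|\le\ell/m^2$ for every $p$, hence $\|D\|_2\le 1-\sqrt{1-1/m^2}$.

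On that event $W(H_m)=W(\hat H_m)$ is contained in the disk about the origin of radius $\cos\frac{\pi}{m+1}+4\big(1-\sqrt{1-1/m^2}\,\big)/\sqrt{1-1/m^2}$, whereas $W(A)$ is the regular $k$-gon inscribed in the unit circle, whose boundary point nearest the origin is an edge midpoint at distance $\cos\frac{\pi}{k}$. Consequently
\[
d\big(W(H_m),\partial W(A)\big)\ \ge\ \cos\tfrac{\pi}{k}-\cos\tfrac{\pi}{m+1}-\frac{4\big(1-\sqrt{1-1/m^2}\,\big)}{\sqrt{1-1/m^2}},
\]
and it remains to check, by an elementary estimate monotone in $m$ (in the spirit of the function $f(m)$ concluding the proof of Theorem~\ref{thm:norm_lower}), that the right-hand side is at least $2/m^2$ for $m\ge 10$ under the stated hypotheses. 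The main obstacle is precisely this quantitative bookkeeping: the argument is tight, so one must (i) pin down the numerical radius of $S_m$ exactly as $\cos\frac{\pi}{m+1}$ (the source of the $\Theta(m^{-2})$ gap), and (ii) verify that the $O(m^{-2})$ polar-decomposition perturbation, together with the apothem defect $1-\cos\frac{\pi}{k}$, does not erode this gap below $2/m^2$.
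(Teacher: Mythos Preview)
Your approach is essentially identical to the paper's: collapse eigenvalue multiplicities to pass to a $k\times k$ problem, recognize the noiseless compression $\tilde H_m$ as the nilpotent shift $S_m$ with numerical radius $\cos\frac{\pi}{m+1}$, and control the random perturbation via Proposition~\ref{prop:polar} together with the $\chi^2$ tail bound (Proposition~\ref{prop:chisq}) at $t=m^{-2}$.

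Two remarks. First, a small indexing slip: with $A=\mathrm{diag}(\bm\theta\otimes\bm 1)$ the entries carrying eigenvalue $\omega^j$ occupy positions $(j-1)\ell+1,\dots,j\ell$, so $\hat{\bm b}_j$ should be $\big(\sum_{c=1}^{\ell}|\bm b_{(j-1)\ell+c}|^2\big)^{1/2}$ rather than summing over positions $j+(c-1)k$; the distribution of $\hat{\bm b}$ is unaffected, but the identity $W(H_m)=W(\hat H_m)$ requires the correct grouping. Second, your insertion of the apothem $\cos(\pi/k)$ in the final inequality is more scrupulous than the paper, which tacitly replaces it by $1$ when concluding $d(W(H_m),\partial W(A))\ge 1-f(m)$. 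The obstacle you flag in (ii) is real: since $1-f(m)\sim(\pi^2/2-2)/m^2$ while $1-\cos(\pi/k)\sim\pi^2/(2k^2)$, the inequality $\cos(\pi/k)-f(m)\ge 2/m^2$ actually fails when $k$ is too close to $m$ (and for $k=m$ the reduction even gives $\hat H_m$ unitarily equivalent to $\hat A$, so the distance is zero). The argument goes through cleanly once $k$ exceeds a fixed multiple of $m$; the paper's proof does not address this edge case.
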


\begin{proof}
The proof technique here is nearly identical to that of Theorem \ref{thm:norm_lower}.
Without loss of generality, let $\bm{b} \sim \compnormvec$ be a complex Gaussian vector. Let $\hat A = \mathrm{diag}( \bm \theta) \in \mathbb{C}^{k \times k}$, $ \hat {\bm b} \in \mathbb{R}^{k}$ be such that 
\[\hat {\bm b}_j= \sqrt{\sum_{p = 1}^{\ell} |\bm b_{p+(j-1)\ell}|^2},\]
and $\hat H_m$ be the orthogonal projection of $\hat A$ onto $\mathcal{K}_{m}(\hat A,\hat {\bm b})$ ($\mathrm{dim}(\mathcal{K}_{m}(\hat A,\hat {\bm b})) = m$ with probability one). Again, by Equation \ref{eqn:normal_RQ}, $H_m$  and $\hat H_m$ have the same numerical range, and so we may instead work with $\hat H_m$ and note that two times the squared entries of $\hat {\bm b}$ are independent chi-squared variables with $2\ell$ degrees of freedom each. Let $\tilde {\bm b} = \sqrt{\ell}\,  \bm{1}\in \mathbb{C}^{k}$ and $\tilde H_m = \tilde Q_m^* \hat A \tilde Q_m$ be the orthogonal projection of $\hat A$ onto $\mathcal{K}_{m}(\hat A,\tilde {\bm b})$. The vectors $\hat A^p \tilde {\bm b}$ are orthogonal, with constant norm
  \[    \langle \hat A^p \tilde {\bm b}, \hat A^q \tilde {\bm b}\rangle= \ell \sum_{j=1}^{k} \bm{\theta}_j^{(p-q)}= \ell \sum_{j=1}^k e^{2 \pi i(p-q)\frac{j}{k}}  = \begin{cases}
             k \ell &  \text{ if } p=q \\
            0 & \text{ otherwise },
        \end{cases}\]
and so $[\tilde H_m]_{p+1,p} = 1$ and equals zero elsewhere. This matrix $H_m$ has a well-known numerical range, with radius $\cos(\pi/(m+1))$ \cite{yuanwu1998numerical}. Now, consider how different the situation can be for $\hat H_m$. For $\hat {\bm b}$, the vectors $\hat A^p \hat {\bm b}$, $p = 0,\ldots,m-1$, have the same span as $\sqrt{\ell}(I+D) \tilde Q_m$, where $D$ is diagonal with $D_{pp} = \frac{\hat  {\bm{b}}_p}{\sqrt{\ell}} - 1$.  By Proposition \ref{prop:chisq} with $t:= m^{-2}$ and noting that $\ell \ge 8\, m^4  \ln k $, 
\[\mathbb{P}\left[ \max_{j =1,\ldots,k} |2 \hat{\bm b}_j^2 - 2\ell| \ge \frac{2 \ell}{m^2}  \right] \le 2 k \exp \bigg( - \frac{2 \ell}{8 m^4} \bigg) \le \frac{2}{k}.\]
Therefore, $\|D\|_2 \le 1-  \sqrt{1 - m^{-2}}$
with probability at least $1 -2/k$. Again, similar to the proof of Theorem \ref{thm:norm_lower}, by Proposition \ref{prop:polar} applied to $\tilde Q_m$ and $(I+D)\tilde Q_m$, 
\[\|\tilde H_m - U^* \hat H_m U\|_2 \le \frac{4\left(1-  \sqrt{1 - m^{-2}}\right)}{\sqrt{1 - m^{-2}}} , \]
for some unitary matrix $U \in \mathbb{C}^{m \times m}$. Therefore, the largest modulus of an element of $W(\hat H_m)$ is at most 
\[f(m):=\cos\left(\frac{\pi}{m+1}\right) + \frac{4\left(1-  \sqrt{1 - m^{-2}}\right)}{\sqrt{1 - m^{-2}}}.\]
The function $(1-f(m))m^2$ is monotonically increasing, and so $(1-f(m))m^2 \ge 100(1-f(10))>2$ for all $m \ge 10$. Therefore, $f(m) \le 1 - 2/m^2$ for $m \ge 10$.
\end{proof}

Again, similar to the lower bound of Theorem \ref{thm:norm_lower} (see Remark \ref{rem:lower}), the requirements on $n$ relative to $m$ are quite pessimistic. See Figure \ref{fig:lower} to observe this behavior numerically for relatively small $n$.

\section{The Numerical Range and Extreme Eigenvalues of a Non-Normal Matrix}\label{sec:nonnormal}

To this point, we have produced estimates for the approximation quality of $W(H_m)$ for $W(A)$ for normal matrices through estimates for extreme eigenvalues of $A$. These estimates crucially rely on the fact that, for a normal matrix, the numerical range equals the convex hull of the eigenvalues $W(A) = \mathrm{conv}( \Lambda(A))$. When $A$ is non-normal, $W(A)$ may be strictly larger than $\mathrm{conv}(\Lambda(A))$. The extent to which this can occur can be easily quantified in terms of the conditioning of $V$ in the eigendecomposition $A = V \Lambda V^{-1}$.

\begin{lemma}\label{lm:convtonum}
Let $A = V \Lambda V^{-1}$, with $\Lambda$ diagonal and $\|\Lambda\|_2 \le 1$. Then
\[ d_H( \mathrm{conv}( \Lambda(A) ), W(A)) \le \|V \|_2 \| (V^* V)^{-1/2} - I\|_2 + \| (V^* V)^{1/2} - I\|_2.\]
\end{lemma}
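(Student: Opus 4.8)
The plan is to rewrite $W(A)$ as the numerical range of a matrix that is a small perturbation of $\Lambda$, and then invoke the Lipschitz continuity of the numerical range in the matrix $2$-norm. Set $M = V^* V$, which is Hermitian positive definite since $V$ is invertible, so that $M^{1/2}$ and $M^{-1/2}$ are well-defined Hermitian positive definite matrices. Using $A = V \Lambda V^{-1}$, the substitution $\bm x = V \bm y$ (a bijection of $\mathbb{C}^n$) gives $\bm x^* A \bm x = \bm y^* M \Lambda \bm y$ and $\bm x^* \bm x = \bm y^* M \bm y$, and the further substitution $\bm z = M^{1/2} \bm y$ yields
\[ W(A) = \left\{ \frac{\bm z^* M^{1/2} \Lambda M^{-1/2} \bm z}{\bm z^* \bm z} \;:\; \bm z \in \mathbb{C}^n \setminus \{0\} \right\} = W\!\left( M^{1/2} \Lambda M^{-1/2} \right). \]
Since $\Lambda$ is normal (diagonal), $W(\Lambda) = \mathrm{conv}(\Lambda(A))$, so it suffices to bound $d_H\big( W(\Lambda), W(M^{1/2} \Lambda M^{-1/2}) \big)$.

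Next I would use the elementary fact that $d_H(W(B), W(C)) \le \|B - C\|_2$ for all $B, C \in \mathbb{C}^{n \times n}$: for any unit vector $\bm x$, the points $\bm x^* B \bm x \in W(B)$ and $\bm x^* C \bm x \in W(C)$ satisfy $|\bm x^*(B-C)\bm x| \le \|B - C\|_2$, and this bounds both one-sided Hausdorff distances. Hence it remains only to bound $\|\Lambda - M^{1/2} \Lambda M^{-1/2}\|_2$. Telescoping,
\[ \Lambda - M^{1/2} \Lambda M^{-1/2} = (I - M^{1/2})\Lambda + M^{1/2}\Lambda(I - M^{-1/2}), \]
and applying the triangle inequality and submultiplicativity together with $\|\Lambda\|_2 \le 1$ and $\|M^{1/2}\|_2 = \|V^* V\|_2^{1/2} = \|V\|_2$ gives
\[ \|\Lambda - M^{1/2} \Lambda M^{-1/2}\|_2 \le \|(V^* V)^{1/2} - I\|_2 + \|V\|_2 \, \|(V^* V)^{-1/2} - I\|_2, \]
which is exactly the claimed bound.

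All of the steps here are routine: the only thing requiring a little care is checking that the two changes of variables are genuine bijections of $\mathbb{C}^n$ that transform the Rayleigh quotient as stated, which is immediate from invertibility of $V$ and positive definiteness of $M$. I do not expect a real obstacle; the one mild ``trick'' is recognizing that conjugating $\Lambda$ by $M^{1/2}$ (rather than by $V$ itself) is what produces the clean identity $W(A) = W(M^{1/2} \Lambda M^{-1/2})$, after which the perturbation bound and the $\|\cdot\|_2$-Lipschitz property of the numerical range do the rest.
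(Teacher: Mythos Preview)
Your proof is correct and follows essentially the same route as the paper: the paper defines $\hat A = V(V^*V)^{-1/2}\Lambda(V^*V)^{1/2}V^{-1}$, observes $W(\hat A)=\mathrm{conv}(\Lambda(A))$, and bounds $\|A-\hat A\|_2$, which (after a unitary conjugation by the polar factor $U=V(V^*V)^{-1/2}$) reduces to exactly your estimate on $\|(V^*V)^{1/2}\Lambda(V^*V)^{-1/2}-\Lambda\|_2$ via the same telescoping. Your two explicit changes of variables in the Rayleigh quotient are just an alternative way of expressing that $W(A)=W(M^{1/2}\Lambda M^{-1/2})$, so the arguments are identical in substance.
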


\begin{proof}
In order to bound the Hausdorff distance, it suffices to define $\hat A = V(V^*V)^{-1/2} \Lambda (V^*V)^{1/2} V^{-1}$, note that $W(\hat A) = \mathrm{conv}( \Lambda(A) )$, and bound the difference in Rayleigh quotient (itself bounded by the two-norm) between $A$ and $\hat A$ for an arbitrary vector $\bm x$. We have
\begin{align*}
\|A - \hat A \|_2 &\le \|V \Lambda V^{-1} - V(V^*V)^{-1/2}\Lambda (V^*V)^{1/2} V^{-1}\|_2 \\
&\le \|(V^*V)^{1/2}\Lambda(V^*V)^{-1/2} - \Lambda \|_2 \\
&\le \|(V^*V)^{1/2}\Lambda(V^*V)^{-1/2} - (V^*V)^{1/2}\Lambda\|_2 + \|(V^*V)^{1/2}\Lambda - \Lambda \|_2 \\
&\le \|(V^*V)^{1/2} \|_2 \| (V^* V)^{-1/2} - I\|_2 + \| (V^* V)^{1/2} - I\|_2.
\end{align*}
\end{proof}

When this distance is large, the techniques used in Section \ref{sec:normal} are no longer sufficient to provide a quantitative estimate on $W(A)$ using $W(H_m)$. Unfortunately, in the worst-case, this issue is inherent to the problem. The following example illustrates that the bound in Lemma \ref{lm:convtonum} is essentially tight in the worst case, and that $W(H_m)$ in some cases no loner provides a reasonable estimate of $W(A)$ for even moderately sized $m$.

\begin{example}\label{ex:bad}
Consider the matrix $A = \Lambda + \gamma \bm{e}_1 \bm{e}^T_n \in \mathbb{R}^{n \times n}$, where $\gamma>0$ and $\Lambda$ is a diagonal matrix with $\Lambda_i = \cos \big(\frac{(i-1)\pi}{n-1}\big)$ for $i = 1,\ldots,n$. $A$ has all its eigenvalues in $[-1,1]$, numerical range 
\[W(A) = \left\{ z \in \mathbb{C} \, \bigg| \, \frac{\mathrm{Re}(z)^2}{1 + \gamma^2/4} + \frac{\mathrm{Im}(z)^2}{\gamma^2/4} \le 1 \right\},\]
and eigenvector condition number $\kappa_{V}(A) = \sqrt{1+\gamma^2/4}$. As $\gamma$ increases, the distance between $\partial W(A)$ and $[-1,1]$ and the size of $\kappa_V(A)$ both increase linearly. The numerical range of most Krylov subspaces will contain $[-1,1]$ (the convex hull of the eigenvalues of $A$) relatively quickly, but will fail to approximate $W(A)$ in a reasonable amount of time. See Figure \ref{fig:nonnormal} for an example with $n = 1000$, $\gamma = 2$, and $m = 100$.
\end{example}

In this section, we present two main results. The first provides estimates on the Hausdorff distance between $W(H_m)$ and $W(A)$ (Theorem \ref{thm:non-normal} in Subsection \ref{sub:nearnormal}). Unsurprisingly, in light of Example \ref{ex:bad}, these estimates degrade rather quickly once $\kappa(V)$ is larger than $1 +o_n(1)$. However, even though $W(H_m)$ may no longer approximate $W(A)$ well once $\kappa(V)$ grows, as long as the eigenvectors still possess repulsive behavior (quantified below in Subsection \ref{sub:repulse}), precise statements regarding the approximation that $W(H_m)$ provides to $\mathrm{conv}(\Lambda(A))$ can still be made, in some cases, for $\kappa(V)$ as large as $n^{1/2-\epsilon}$ (Theorem \ref{thm:eigenhull} in Subsection \ref{sub:repulse}). 

\subsection{Estimating the Numerical Range for a Nearly Normal Matrix}\label{sub:nearnormal} First, we estimate the approximation the $W(H_m)$ provides to an arbitrary extreme eigenvalue $\lambda$ of $\Lambda(A)$. In the previous section, we produced bounds on this approximation by using extremal polynomials from Subsection \ref{sub:poly} applied to Equation \ref{eqn:normal_RQ}. Here, we apply the same techniques, but must account for two key differences between Equation \ref{eqn:nonnormal_RQ} and Equation \ref{eqn:normal_RQ}: the existence of the term $V^* V$ in both the numerator and denominator, and that $\bm{\alpha} = V^{-1} \bm{b}$ is no longer uniform on $\mathbb{S}^{n-1}$. The former issue is treated by the following lemma and the latter issue is treated using Proposition \ref{prop:perturbed_basis}.

\begin{lemma}\label{lm:non-normal_worstcase}
Let $V$ be invertible. Then
\[\left|\frac{\bm x^* V^* V D \bm{x}}{\bm x^* V^* V \bm x}\right| \le \|V^{-1}\|_2^2 \left| \frac{\bm x^* D \bm{x}}{\bm x^*  \bm x} \right| + \|D\|_2 \|V^{-1}\|_2 \|V - (V^{-1})^*\|_2 .\]
\end{lemma}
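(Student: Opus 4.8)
The plan is to rewrite the quantity $\bm x^* V^* V D \bm x / \bm x^* V^* V \bm x$ by introducing the vector $\bm y = V \bm x$, so that the expression becomes $\bm y^* V D V^{-1} \bm y / \bm y^* \bm y$. This is now an ordinary Rayleigh quotient, and the strategy is to compare the matrix $V D V^{-1}$ against the ``symmetrized'' version $(V^{-1})^* D V^{-1}$ — equivalently, to compare $\bm x^* V^* V D \bm x$ against $\bm x^* D \bm x$ after accounting for the appropriate normalization. Concretely, I would write
\[
\frac{\bm x^* V^* V D \bm x}{\bm x^* V^* V \bm x} - \frac{\bm x^* D \bm x}{\bm x^* \bm x}\cdot\frac{\bm x^* \bm x}{\bm x^* V^* V \bm x} \cdot \frac{\bm x^* V^* V \bm x}{\bm x^* \bm x}
\]
is not quite the cleanest route; instead the natural split is
\[
\bm x^* V^* V D \bm x = \bm x^* D \bm x + \bm x^*(V^* V - I) D \bm x,
\]
and then bound the second term and the mismatch in denominators separately.

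The key steps, in order: (1) Observe that $V^* V - I = V^*(V - (V^{-1})^*)$, which lets one bound $\|V^* V - I\|_2 \le \|V\|_2 \|V - (V^{-1})^*\|_2$ — wait, this introduces $\|V\|_2$ rather than $\|V^{-1}\|_2$, so I would instead factor as $V^* V - I = (V^* - V^{-1})V = ((V^{-1})^* - V)^* \cdot$ — actually the clean identity is $V^*V - I = V^*(V - (V^{-1})^*)$ and then one writes $\bm x^*(V^*V-I)D\bm x = (V\bm x)^*(V - (V^{-1})^*)D\bm x$. (2) Bound the numerator error: $|(V\bm x)^*(V-(V^{-1})^*)D\bm x| \le \|V\bm x\|_2 \|V - (V^{-1})^*\|_2 \|D\|_2 \|\bm x\|_2$. (3) Handle the denominator: since $\bm x^* V^* V \bm x = \|V\bm x\|_2^2 \ge \|\bm x\|_2^2 / \|V^{-1}\|_2^2$, dividing the main term $\bm x^* D \bm x$ by $\bm x^* V^* V \bm x$ rather than by $\bm x^* \bm x$ costs at most a factor $\|V^{-1}\|_2^2$, giving the first term $\|V^{-1}\|_2^2 |\bm x^* D\bm x / \bm x^*\bm x|$. (4) For the error term, divide by $\|V\bm x\|_2^2$: this yields $\|V - (V^{-1})^*\|_2 \|D\|_2 \|\bm x\|_2 / \|V\bm x\|_2 \le \|V - (V^{-1})^*\|_2 \|D\|_2 \|V^{-1}\|_2$, using $\|\bm x\|_2 \le \|V^{-1}\|_2 \|V\bm x\|_2$. (5) Combine via the triangle inequality.

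The main obstacle is bookkeeping the norm factors correctly so that the error term comes out with the stated constant $\|V^{-1}\|_2 \|V - (V^{-1})^*\|_2$ rather than something worse like $\|V\|_2 \|V^{-1}\|_2 \|V - (V^{-1})^*\|_2$ or $\|V^{-1}\|_2^2\|V-(V^{-1})^*\|_2$; the trick is to be careful about whether one divides the error numerator by $\bm x^*\bm x$ or by $\bm x^* V^* V\bm x = \|V\bm x\|_2^2$, and to use the one ($\|V\bm x\|_2^2$) that naturally absorbs one copy of $\|V\bm x\|_2$ from the Cauchy–Schwarz bound and leaves a single $\|\bm x\|_2/\|V\bm x\|_2 \le \|V^{-1}\|_2$. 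Everything else is a routine application of Cauchy–Schwarz, submultiplicativity, and the triangle inequality; no positivity or convexity input is needed beyond the elementary norm bound $\|V^{-1}\|_2^{-1}\|\bm x\|_2 \le \|V\bm x\|_2 \le \|V\|_2 \|\bm x\|_2$.
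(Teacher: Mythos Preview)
Your proposal is correct and follows essentially the same approach as the paper: split $\bm x^*V^*VD\bm x = \bm x^*D\bm x + \bm x^*(V^*V-I)D\bm x$, factor $V^*V - I = V^*(V-(V^{-1})^*)$, divide both pieces by $\bm x^*V^*V\bm x = \|V\bm x\|_2^2$, and bound using $\|\bm x\|_2/\|V\bm x\|_2 \le \|V^{-1}\|_2$. The only cosmetic difference is that the paper inserts $V^{-1}V$ to write the error term as $(V\bm x)^*(V-(V^{-1})^*)DV^{-1}(V\bm x)$ before applying the norm bound, whereas you apply Cauchy--Schwarz directly and then use $\|\bm x\|_2 \le \|V^{-1}\|_2\|V\bm x\|_2$; both routes give the identical constant.
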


\begin{proof}
We have 
\begin{align*}
\left|\frac{\bm x^* V^* V D \bm{x}}{\bm x^* V^* V \bm x}\right| &\le \left| \frac{\bm x^* D \bm{x}}{\bm x^* V^* V  \bm x} \right| + \left|\frac{\bm x^* (V^*V -I)D   \bm{x}}{\bm x^* V^* V \bm x}\right| \\
&= \left| \frac{\bm x^* D \bm{x}}{\bm x^*  \bm x}\right| \left| \frac{\bm x^* \bm x}{\bm x^* V^* V  \bm x}\right| + \left|\frac{\bm x^* V^*( V -(V^{-1})^*)D V^{-1} V  \bm{x}}{\bm x^* V^* V \bm x}\right| \\
&\le \left| \frac{\bm x^* D \bm{x}}{\bm x^*  \bm x} \right| \|V^{-1}\|_2^2 +  \|V - (V^{-1})^*\|_2 \|D\|_2 \|V^{-1}\|_2.
\end{align*}
\end{proof}

A fair amount is lost in the bound of Lemma \ref{lm:non-normal_worstcase} compared to what one may expect in practice. For instance, this bound handles the worst case scenario for a vector $\bm{x}$, but fails to exploit the potential benefits of using a random $\bm{b}$. However, this lemma is sufficient for the purposes of estimating $W(A)$, as when $\kappa_V(A)$ is larger than $1 + o_n(1)$, there can be other barriers to accurate approximation (see Example \ref{ex:bad}).

\begin{figure}
\begin{center}
\subfigure[$W(A)$ and $W(H_{100})$ for ten different $\bm{b} \sim \mathrm{Unif}(\mathbb{S}^{n-1})$]{\includegraphics[height = 2.1in]{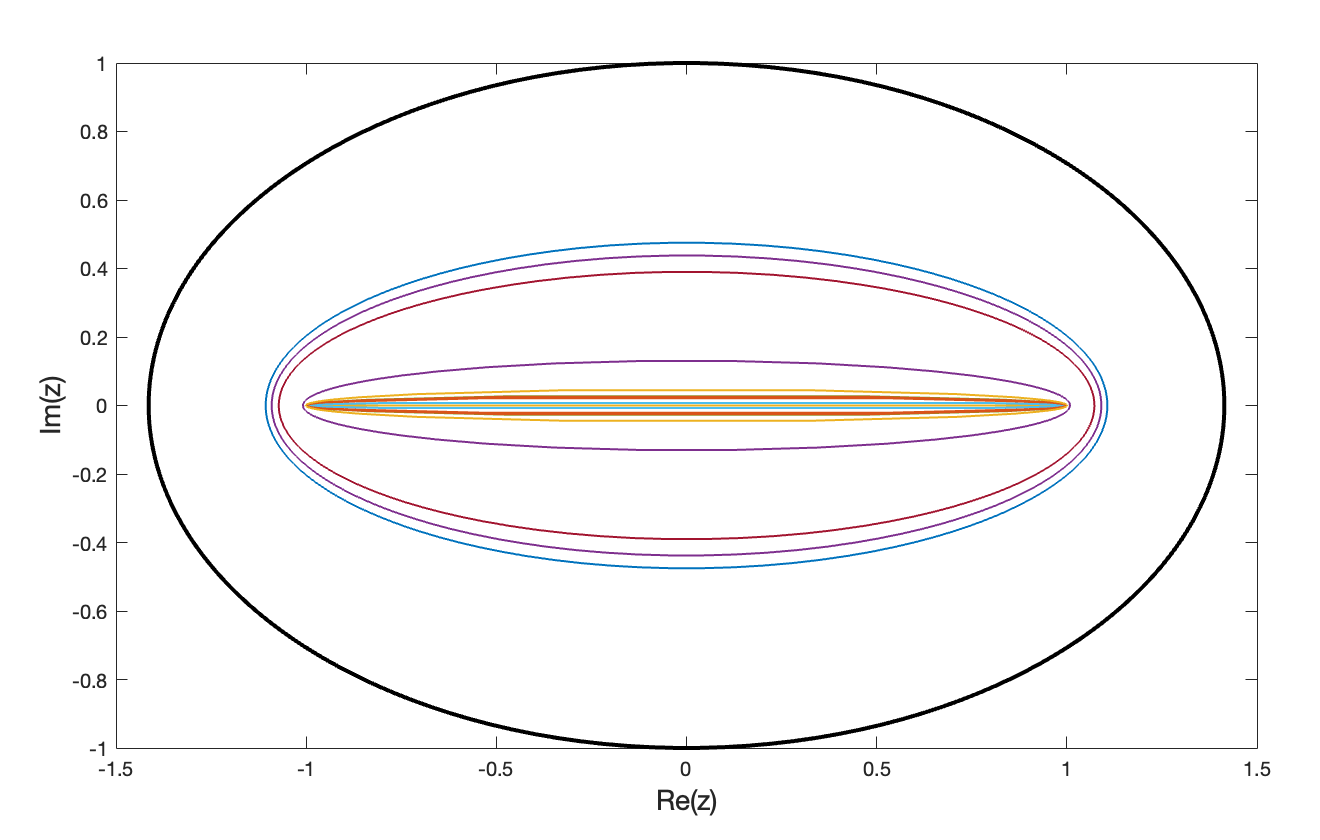}} 
\subfigure[$d_H(W(A),W(H_m))$ for $m = 1,\ldots,100$ and ten different $\bm{b} \sim \mathrm{Unif}(\mathbb{S}^{n-1})$]{\includegraphics[height = 2.1in]{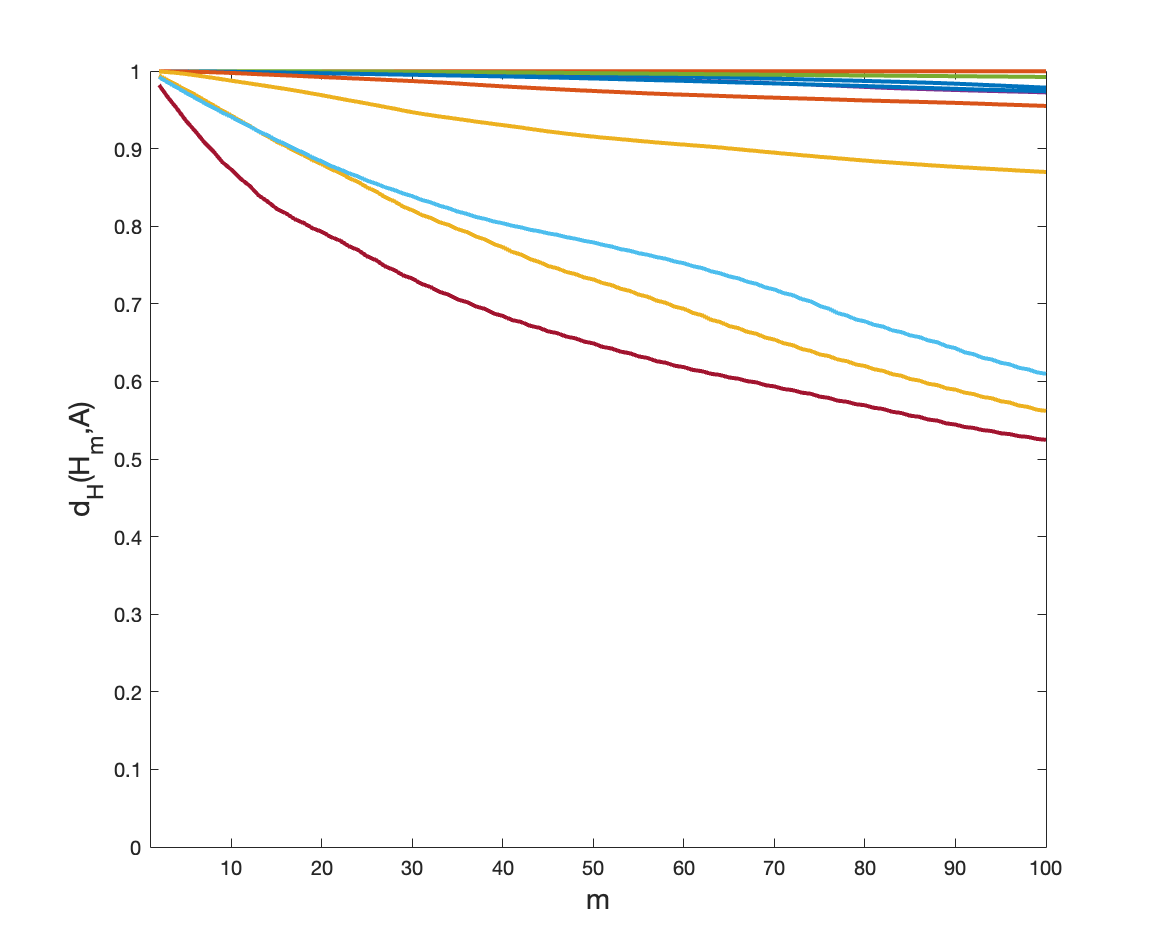}}
\caption{The numerical range of $H_{m}$ for $A = \mathrm{diag}(\cos \tfrac{(j-1)\pi}{n}) + 2 \bm{e}_1 \bm{e}_n^T$ and ten different $\bm{b} \sim \mathrm{Unif}(\mathbb{S}^{n-1})$, where $n = 1000$ and $m = 100$. The numerical range quickly (approximately) contains $[-1,1]$ (the convex hull of eigenvalues of $A$) in all instances, but fails to reasonably approximate the numerical range of $A$. In over half the cases, $W(H_{100})$ is barely bigger than $[-1,1]$, and in all cases the Hausdorff distance between $W(A)$ and $W(H_{100})$ is still at least $1/2$.}
\label{fig:nonnormal}
\end{center}
\end{figure}

Combining Lemmas \ref{lm:convtonum} and \ref{lm:non-normal_worstcase} and applying the same techniques used in the proofs of Lemma \ref{lm:single_eig} and Theorem \ref{thm:circle_upper}, we obtain the following result.

\begin{theorem}\label{thm:non-normal}
Let $m, n \in \mathbb{N}$, and $A \in \mathbb{C}^{n \times n}$ be a diagonalizable matrix with eigendecomposition $A = V \Lambda V^{-1}$, where the columns of $V$ have norm one. Then
\[d_H\big(W(H_{6m+1}),W(A) \big) \le \bigg(\frac{6\kappa^2(V)  }{m}\ln  \bigg( \frac{e m \|\bm \alpha\|^2_2 }{6 \min_{j \in [n]} | \bm \alpha_j |^2}\bigg) +4\kappa(V) (\kappa(V)-1)\bigg) \mathrm{diam}(\mathrm{conv}(\Lambda(A))).\]
In particular, if $\bm b \sim \mathrm{Unif}(\mathbb{S}^{n-1})$, then
\[\Prob\bigg[ \frac{d_{H}\big(W(H_{6m+1}),W(A) \big)}{\mathrm{diam}(\mathrm{conv}(\Lambda(A)))} \le \frac{6\kappa^2(V)  }{m}\ln  \big( n^{2+\alpha} \kappa^2(V)\big) +4\kappa(V) (\kappa(V)-1)  \bigg] \ge 1 - \frac{5m}{4n^{\alpha}}. \]
\end{theorem}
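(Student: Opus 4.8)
The plan is to reduce the non-normal problem to the normal analysis of Section~\ref{sec:normal} by means of Lemmas~\ref{lm:convtonum} and~\ref{lm:non-normal_worstcase}, paying for the reduction with two additive, $\kappa(V)$-dependent error terms. Since $d_H$, $d$, and $\mathrm{diam}$ all scale linearly and shifting $A$ by a scalar multiple of the identity merely translates $W(A)$, $W(H_{6m+1})$, and $\Lambda(A)$ without affecting $V$, $\bm \alpha = V^{-1}\bm b$, or $\kappa(V)$, I would first normalize so that $\mathrm{diam}(\mathrm{conv}(\Lambda(A))) = 1$ and re-shift (placing one eigenvalue at the origin) so that $\|\Lambda\|_2 \le 1$; the claimed inequality is then recovered by multiplying through by $\mathrm{diam}(\mathrm{conv}(\Lambda(A)))$ at the end. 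Because the columns of $V$ have unit norm, $\tfrac{1}{n}\sum_j \sigma_j(V)^2 = \tfrac{1}{n}\|V\|_F^2 = 1$, so $\sigma_{\min}(V) \le 1 \le \sigma_{\max}(V)$; combined with $\sigma_{\max}(V)/\sigma_{\min}(V) = \kappa(V)$ this yields the elementary bounds $\|V\|_2 \le \kappa(V)$, $\|V^{-1}\|_2 \le \kappa(V)$, $\|(V^*V)^{\pm 1/2} - I\|_2 \le \kappa(V)-1$, and, since the singular values of $V - (V^{-1})^*$ are $|\sigma_j(V) - \sigma_j(V)^{-1}|$, also $\|V - (V^{-1})^*\|_2 \le \kappa(V) - 1/\kappa(V)$.

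Next, since $W(H_{6m+1}) \subset W(A)$, we have $d_H(W(H_{6m+1}),W(A)) = \sup_{z\in W(A)} d(z,W(H_{6m+1}))$, and for each $z \in W(A)$, with $w$ the nearest point of $\mathrm{conv}(\Lambda(A))$, the triangle inequality gives $d(z,W(H_{6m+1})) \le |z-w| + d(w,W(H_{6m+1}))$. Here $|z-w| = d(z,\mathrm{conv}(\Lambda(A))) \le d_H(\mathrm{conv}(\Lambda(A)),W(A))$, which by Lemma~\ref{lm:convtonum} and the bounds above is at most $\|V\|_2\|(V^*V)^{-1/2}-I\|_2 + \|(V^*V)^{1/2}-I\|_2 \le \kappa(V)(\kappa(V)-1) + (\kappa(V)-1) = \kappa^2(V)-1$. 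Since $\mathrm{conv}(\Lambda(A))$ is a polytope whose vertices are eigenvalues of $A$, Proposition~\ref{prop:polytopevertices} bounds $d(w,W(H_{6m+1}))$ by $\max_\lambda d(\lambda,W(H_{6m+1}))$ over those vertices, so it remains to bound $d(\lambda,W(H_{6m+1}))$ for a fixed extreme eigenvalue $\lambda$.

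For this I would start from Equation~\ref{eqn:nonnormal_RQ}, substitute $\bm y = p(\Lambda)\bm \alpha$, and apply Lemma~\ref{lm:non-normal_worstcase} with $D = \lambda I - \Lambda$ to obtain, uniformly over $p \in \mathcal{P}_{6m}$,
\begin{align*}
d(\lambda,W(H_{6m+1})) &\le \|V^{-1}\|_2^2 \min_{p \in \mathcal{P}_{6m}} \left| \frac{\sum_j |\bm \alpha_j|^2 |p(\Lambda_j)|^2 (\lambda - \Lambda_j)}{\sum_j |\bm \alpha_j|^2 |p(\Lambda_j)|^2} \right| \\
&\qquad {}+ \|\lambda I - \Lambda\|_2 \, \|V^{-1}\|_2 \, \|V - (V^{-1})^*\|_2.
\end{align*}
The second summand is at most $1 \cdot \kappa(V) \cdot (\kappa(V) - 1/\kappa(V)) = \kappa^2(V) - 1$, using $\|\lambda I - \Lambda\|_2 = \max_j |\lambda - \Lambda_j| \le \mathrm{diam}(\mathrm{conv}(\Lambda(A))) = 1$. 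The minimum in the first summand is exactly the normal-case Rayleigh quotient of Equation~\ref{eqn:normal_RQ} with the entries of $\bm \alpha$ in the role of $\langle \bm b, \bm \varphi^{(j)}\rangle$; since $\lambda$ is a vertex of $\mathrm{conv}(\Lambda(A))$, a rotation and translation place $\lambda$ at the origin with $\mathrm{conv}(\Lambda(A)) \subset D_0$, after which the argument of Lemma~\ref{lm:single_eig} — choosing $p = \widetilde P_{m,\delta}$ from Proposition~\ref{prop:remez} with $\delta = \tfrac{6}{m}\ln\!\big(\tfrac{m\|\bm \alpha\|_2^2}{6\min_j |\bm \alpha_j|^2}\big)$ — bounds it by $\tfrac{6}{m}\ln\!\big(\tfrac{em\|\bm \alpha\|_2^2}{6\min_j |\bm \alpha_j|^2}\big)$. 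Summing the three contributions ($\kappa^2(V)-1$ from Lemma~\ref{lm:convtonum}, another $\kappa^2(V)-1$ from the additive part of Lemma~\ref{lm:non-normal_worstcase}, and $\kappa^2(V)$ times the normal bound) and using $2(\kappa^2(V)-1) = 2(\kappa(V)-1)(\kappa(V)+1) \le 4\kappa(V)(\kappa(V)-1)$ for $\kappa(V) \ge 1$ proves the first displayed inequality.

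Finally, for the probabilistic statement I would invoke Proposition~\ref{prop:perturbed_basis} with $M = V^{-1}$ (so that $\kappa(M) = \kappa(V)$) and $t = em/(6n^\alpha)$: with probability at least $1 - e^2 m/(6n^\alpha) \ge 1 - 5m/(4n^\alpha)$ one has $\min_j |\bm \alpha_j|^2 / \|\bm \alpha\|_2^2 \ge em/(6 n^{2+\alpha}\kappa^2(V))$, so the logarithm appearing above is at most $\ln(n^{2+\alpha}\kappa^2(V))$, which gives the stated bound. The only step requiring real care — the ``main obstacle'' — is verifying that Lemma~\ref{lm:non-normal_worstcase} can be applied to the polynomial-weighted vector $\bm y = p(\Lambda)\bm \alpha$ so as to strip off the factor $\|V^{-1}\|_2^2$ uniformly in $p$ while leaving exactly the normal Rayleigh quotient that the extremal-polynomial construction is built to handle; the singular-value estimates and the closing arithmetic are then routine.
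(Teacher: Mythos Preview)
Your proposal is correct and follows essentially the same route as the paper's proof: reduce to the normal Rayleigh quotient via Lemma~\ref{lm:non-normal_worstcase} applied with $\bm x = p(\Lambda)\bm\alpha$ and $D = \lambda I - \Lambda$, invoke the extremal polynomial $\widetilde P_{m,\delta}$ exactly as in Lemma~\ref{lm:single_eig}, pass from extreme eigenvalues to $\mathrm{conv}(\Lambda(A))$ via Proposition~\ref{prop:polytopevertices}, and absorb the gap between $\mathrm{conv}(\Lambda(A))$ and $W(A)$ using Lemma~\ref{lm:convtonum}; the probabilistic part is identical (Proposition~\ref{prop:perturbed_basis} with $M = V^{-1}$, $t = em/(6n^\alpha)$). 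Your singular-value estimates $\|V - (V^{-1})^*\|_2 \le \kappa - 1/\kappa$ and $d_H(\mathrm{conv}(\Lambda(A)),W(A)) \le \kappa^2-1$ are in fact slightly sharper than the paper's intermediate bounds $2(\kappa(V)-1)$ and $2\kappa(V)(\kappa(V)-1)$, and your explicit one-sided Hausdorff decomposition (using $W(H_{6m+1})\subset W(A)$ and then passing through the nearest point of $\mathrm{conv}(\Lambda(A))$) is a cleaner way to combine the two pieces than the paper's bare triangle inequality for $d_H$, which tacitly needs the same observation to control $\tilde d_H(W(H_{6m+1}),\mathrm{conv}(\Lambda(A)))$.
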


\begin{proof}
The proof of this result is quite similar to that of Lemma \ref{lm:single_eig} and Theorem \ref{thm:main}, with the additional use of Proposition \ref{prop:perturbed_basis} and Lemmas \ref{lm:convtonum} and \ref{lm:non-normal_worstcase}. First, we estimate the distance between an arbitrary extreme point $\lambda \in \Lambda(A)$ and $W(H_{6m+1})$.

Without loss of generality, let $\lambda = 0$ be the extreme point, $\mathrm{diam}(W(A)) =1$, and $W(A) \subset D_0$, the half disk. By Equation \ref{eqn:nonnormal_RQ} and Lemma \ref{lm:non-normal_worstcase},
\[\min_{x \in \mathcal{K}_{6m+1}(A,b)} \bigg| \frac{\bm x^* A \bm x}{\bm x^* \bm x} \bigg| \le \|V^{-1}\|_2^2 \min_{p \in \mathcal{P}_{6m}} \left| \frac{\bm \alpha^* p(\Lambda)^* \Lambda p(\Lambda) \bm{\alpha}}{\bm \alpha^* p(\Lambda)^* p(\Lambda)  \bm \alpha} \right|  +  \|V - (V^{-1})^*\|_2  \|V^{-1}\|_2.\]
We have $\|V^{-1}\|_2 \le \kappa(V)$ and 
\[\|V - (V^{-1})^*\|_2 \le \max_{j \in [n]} \sigma_j(V) - \sigma_j(V)^{-1} \le 2 (\kappa(V)-I).\]
Let $\delta >0$ and $S_\delta = \{\Lambda_j \in \Lambda(A) \, | \, |\Lambda_j| > \delta \}$. Then
\begin{align*}
\min_{p \in \mathcal{P}_{6m}} \left| \frac{\bm \alpha^* p(\Lambda)^* \Lambda p(\Lambda) \bm{\alpha}}{\bm \alpha^* p(\Lambda)^* p(\Lambda)  \bm \alpha} \right| &\le   \min_{p \in \mathcal{P}_{6m}} \left| \frac{\sum_{\Lambda_j \in S_\delta} |\bm \alpha_j|^2  |p(\Lambda_j)|^2 \Lambda_j}{\sum_{j=1}^n |\bm \alpha_j|^2  |p(\Lambda_j)|^2} \right| +  \left| \frac{\sum_{\Lambda_j \not \in S_\delta} |\bm \alpha_j|^2  |p(\Lambda_j)|^2 \Lambda_j}{\sum_{j=1}^n |\bm \alpha_j|^2  |p(\Lambda_j)|^2} \right| \\
&\le \min_{p \in \mathcal{P}_{6m}} \frac{ \sum_{\Lambda_j \in S_\delta} |\bm \alpha_j|^2  |p(\Lambda_j)|^2 }{\min_{j \in [n]} | \bm \alpha_j |^2  |p(0)|^2} + \delta.
\end{align*}
Now, consider the choice $p(z) = \widetilde P_{m, \delta}(z)$, as defined in Proposition \ref{prop:remez}. By Inequality \ref{ineq:remez},
\[\frac{\sum_{\Lambda_j \in S_\delta} |\bm \alpha_j|^2  |\widetilde P_{m,\delta}(\Lambda_j)|^2 }{\min_{j \in [n]} | \bm \alpha_j |^2  |\widetilde P_{m,\delta}(0)|^2} \le \frac{\|\bm \alpha\|^2_2}{\min_{j \in [n]} | \bm \alpha_j |^2} \exp\left(-\frac{m \delta}{6} \right) .\]
 Setting $\delta = 6 m^{-1} \ln  \big( \frac{m \| \bm \alpha \|^2_2 }{6 \min_{j \in [n]} | \bm \alpha_j |^2}\big)$ (if $\delta >1$, the result is trivially true), we have
 \[\min_{p \in \mathcal{P}_{6m}} \left| \frac{\bm \alpha^* p(\Lambda)^* \Lambda p(\Lambda) \bm{\alpha}}{\bm \alpha^* p(\Lambda)^* p(\Lambda)  \bm \alpha} \right| \le  \frac{\|\bm \alpha\|^2_2}{\min_{j \in [n]} | \bm \alpha_j |^2} \exp\left(-\frac{m \delta}{6} \right) + \delta \le \frac{6}{m}\left(1 +   \ln  \bigg( \frac{m \|\bm \alpha\|^2_2 }{6 \min_{j \in [n]} | \bm \alpha_j |^2}\bigg) \right) .\]
Altogether, this implies that
\[\min_{x \in \mathcal{K}_{6m+1}(A,b)} \bigg| \frac{\bm x^* A \bm x}{\bm x^* \bm x} \bigg| \le  \frac{ 6 \kappa^2(V)  }{m}\ln  \bigg( \frac{e m \|\bm \alpha\|^2_2 }{6 \min_{j \in [n]} | \bm \alpha_j |^2}\bigg) +2\kappa(V) (\kappa(V)-1).\]
By Proposition \ref{prop:polytopevertices}, a bound for all extreme eigenvalues of $A$ immediately implies a bound for the Hausdorff distance between $\mathrm{conv}(\Lambda(A))$ and $W(H_{6m+1})$. Combining this with Lemma \ref{lm:convtonum}, we obtain
\begin{align*}
    d_H(W(H_{6m+1}),W(A)) &\le d_H(W(H_{6m+1}),\mathrm{conv}(\Lambda(A))) + d_H(\mathrm{conv}(\Lambda(A)),W(A)) \\
    &\le \frac{6 \kappa^2(V)  }{m}\ln  \bigg( \frac{e m \|\bm \alpha\|^2_2 }{6 \min_{j \in [n]} | \bm \alpha_j |^2}\bigg) +4\kappa(V) (\kappa(V)-1).
\end{align*}
Now, suppose that $\bm b \sim \mathrm{Unif}(S^{n-1})$. By Proposition \ref{prop:perturbed_basis} with $M = V^{-1}$, $t = e m /(6 n^{\alpha})$, we have
\[\Prob\left[ \frac{\min_{j \in [n]} |[V^{-1} \bm b]_j|^2}{\|V^{-1} \bm b\|_2} \ge \frac{e m }{6 n^{2+\alpha} \kappa^2(V)} \right] \ge 1 - \frac{e^2 m}{6 n^{\alpha}} \ge 1- \frac{5m}{4n^\alpha} ,\]
completing the proof.
\end{proof}

An analogous statement to Theorem \ref{thm:non-normal} for $\Lambda(A) \subset \mathbb{S}^0$ can also be made by combining the above proof with the proofs of Theorem \ref{thm:circle_upper} and Corollary \ref{cor:circle_upper}.

\begin{remark}
In Figure \ref{fig:nonnormal}, despite a constant condition number, $W(H_m)$ approximately encloses $\mathrm{conv}(\Lambda(A))$. This can be easily explained in the following way. When restricted to the subspace of $\mathbb{C}^{n \times n}$ with first and last component equal to zero, $A$ is normal, and, when $m >2$, $\mathcal{K}_m(A,\bm{b})$ contains a vector in this subspace. Therefore, $W(H_m)$ must provide a very good estimate of $\mathrm{conv}\big( \Lambda(A) \backslash\{\Lambda_1,\Lambda_n\} \big)$, which in this case is nearly identical to $\mathrm{conv}(\Lambda(A))$. In Subsection \ref{sub:repulse}, we consider a general class of matrices whose eigenvector condition number is of moderate size, yet still $W(H_m)$ approximately encloses $\mathrm{conv}(\Lambda(A))$, due to relatively well-behaved eigenvectors.
\end{remark}

\subsection{Bounding the Eigenvalues of a Non-Normal Matrix with Repulsive Eigenvectors}\label{sub:repulse}
While Subsection $\ref{sub:nearnormal}$ illustrates that estimates of the numerical range break down as the eigenvector condition number increases, we can still obtain a reasonable estimate for the convex hull of eigenvalues when the eigenvectors are relatively well-behaved. Consider the following example:
\begin{example}\label{ex:eigenangles}
    Let $A = V\Lambda V^{-1} \in \mathbb{C}^{n \times n}$, with $V^*V = (1-n^{-2/3})I + n^{-2/3} \bm{1} \bm{1}^T$ and $\Lambda$ as defined in Theorem \ref{thm:norm_lower}. Figure \ref{fig:nonnormal-angles} demonstrates that $\mathrm{conv}(\Lambda(A))$ can be approximated with order $1/m$ convergence, as the condition $|[V^*V]_{jk}| \le n^{-2/3}$ prevents the eigenvectors from clustering.
\end{example}

The following definition parameterizes our notion of eigenvector repulsion, preventing eigenvectors from adversarially combining in any single direction. 

\begin{definition}\label{def:betanormal}
We say a matrix $V \in \mathbb{C}^{n \times n}$ is $\beta$-normal for some $\beta >0$ if its columns have norm one and, for $V = [ \bm{v}_1 \, \ldots \, \bm{v}_n]$ and $(V^{-1})^* = [ \bm{w}_1 \, \ldots \, \bm{w}_n ]$,
\begin{equation}\label{ineq:beta}
\sum_{\ell \in [n] \backslash k } |\bm v_k^* \bm v_\ell| (|\bm w_\ell^* \bm w_j| + |\bm w_k^* \bm w_j|)\leq n^{-\beta}\|\bm w_j\|^2 \qquad \text{for all} \quad j,k \in [n].
\end{equation}
\end{definition}

When this $\beta$-normal condition holds, we can effectively treat the additional difficulty introduced by the term $V^*V$ in the numerator and denominator of Equation \ref{eqn:nonnormal_RQ}.

\begin{figure}
\begin{center}
\subfigure[$W(A)$, $\Lambda(A)$ and $W(H_{30})$ for ten different $\bm{b} \sim \mathrm{Unif}(\mathbb{S}^{n-1})$]{\includegraphics[height = 1.6in]{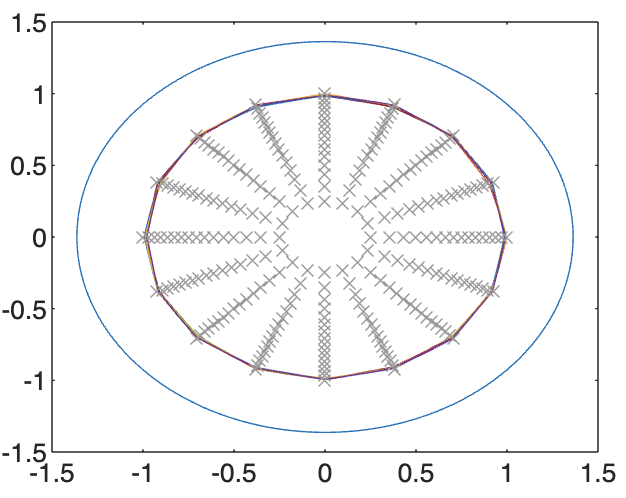}} 
\subfigure[$\tilde{d}_H(\mathrm{conv}(\Lambda(A)), W(H_m))$ for \break $m = 1,\ldots,50$ and ten different \break $\bm{b} \sim \mathrm{Unif}(\mathbb{S}^{n-1})$]{\includegraphics[height = 1.6in]{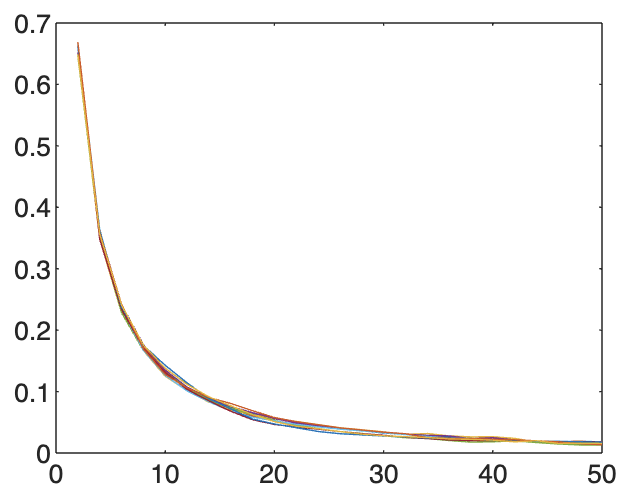}}
\subfigure[$m \times \tilde{d}_H(\mathrm{conv}(\Lambda(A)),W(H_m))$ for \break $m = 1,\ldots,50$ and ten different \break$\bm{b} \sim \mathrm{Unif}(\mathbb{S}^{n-1})$]{\includegraphics[height = 1.6in]{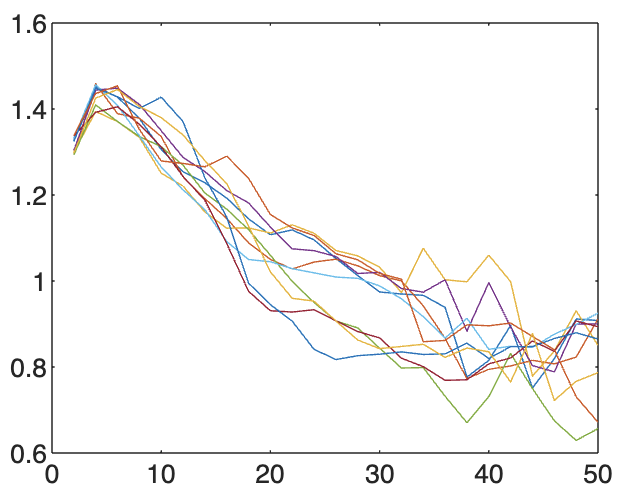}}
\caption{$W(H_{m})$ for $A$ in Example \ref{ex:eigenangles}. Here we let $n=2560$, $m=16$, and $l=10$.}
\label{fig:nonnormal-angles}
\end{center}
\end{figure}

\begin{lemma}\label{lem:VVSmall}
     Let $n \ge 2$ and $\beta >2 \ln(2\ln(e n))/\ln(n)$, $V \in \mathbb{C}^{n \times n}$ be $\beta$-normal (see Definition \ref{def:betanormal}), and $D,P \in \mathbb{C}^{n \times n}$ be diagonal with $\|D\|_2 \le 1$. Then, for $\bm b \sim \mathrm{Unif}(\mathcal{S}^{n-1}(\mathbb{C}))$ and $\varepsilon = 2n^{-\beta/2}\ln(e n)$,
     \[\prb{\bigg|\frac{[PV^{-1}\bm b]^*V^*VD[PV^{-1}\bm b]}{[PV^{-1}\bm b]^*V^*V[PV^{-1}\bm b]}\bigg| \geq \left(\frac{1}{1-\varepsilon}\right)\left(\bigg|\frac{[PV^{-1}\bm b]^*D[PV^{-1}\bm b]}{[PV^{-1}\bm b]^*[PV^{-1}\bm b]}\bigg| + \varepsilon \right)} \leq e n^{-\beta/2} + 2 n^{-1/2} + 4 n^{-1/4}.\]
\end{lemma}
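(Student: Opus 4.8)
Write $\bm x = P V^{-1}\bm b$ and decompose $V^*V = I + E$, where $E$ is Hermitian with $E_{kk}=0$ (the columns of $V$ are unit vectors) and $E_{k\ell} = \bm v_k^*\bm v_\ell$ for $k\neq\ell$. The first step is a purely algebraic reduction. If one can show both $|\bm x^* E D \bm x| \le \varepsilon\|\bm x\|_2^2$ and $|\bm x^* E \bm x| \le \varepsilon\|\bm x\|_2^2$, then $|\bm x^*(I+E)D\bm x| \le |\bm x^*D\bm x| + \varepsilon\|\bm x\|_2^2$ while $\bm x^*(I+E)\bm x \ge (1-\varepsilon)\|\bm x\|_2^2$; since the hypothesis $\beta > 2\ln(2\ln(en))/\ln n$ is exactly the statement $\varepsilon<1$, the denominator is positive, and dividing gives precisely
\[
\left|\frac{\bm x^*V^*VD\bm x}{\bm x^*V^*V\bm x}\right| \le \frac{1}{1-\varepsilon}\left(\left|\frac{\bm x^*D\bm x}{\bm x^*\bm x}\right| + \varepsilon\right).
\]
So everything reduces to controlling the two $E$-perturbations relative to $\|\bm x\|_2^2$.

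For the perturbations, the point is that the $j=k$ instance of Definition \ref{def:betanormal} already forces the off-diagonal row sums of $V^*V$ to be tiny: with $j=k$, dropping the nonnegative $|\bm w_\ell^*\bm w_k|$ terms leaves $\|\bm w_k\|_2^2\sum_{\ell\neq k}|\bm v_k^*\bm v_\ell| \le n^{-\beta}\|\bm w_k\|_2^2$, hence $\sum_{\ell\neq k}|\bm v_k^*\bm v_\ell| \le n^{-\beta}$ for every $k$. Using $\|D\|_2\le1$, the bound $|\bm x_k||\bm x_\ell|\le\tfrac12(|\bm x_k|^2+|\bm x_\ell|^2)$, and the symmetry $|E_{k\ell}|=|E_{\ell k}|$,
\[
|\bm x^* E D \bm x| \le \sum_{k\neq\ell}|\bm x_k|\,|E_{k\ell}|\,|\bm x_\ell| \le \sum_{k}|\bm x_k|^2\sum_{\ell\neq k}|E_{k\ell}| \le n^{-\beta}\|\bm x\|_2^2,
\]
and the same with $D=I$. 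Since $n^{-\beta}\le n^{-\beta/2}\le\varepsilon$, both perturbation bounds hold deterministically for every $\bm b$ with $V^{-1}\bm b\neq 0$ (an almost sure event), so the stated probability bound holds trivially (the bad event is null). This is exactly the ``removal of $V^*V$ with small multiplicative/additive error'' promised in the sketch, and it sharpens Lemma \ref{lm:non-normal_worstcase} by eliminating its $\|V^{-1}\|_2^2$ factor.

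If instead one wants a genuinely probabilistic argument producing the three displayed error terms — for instance using only the $j\neq k$ repulsion structure rather than the $j=k$ collapse above — the plan would be: (i) control $|\bm w_k^*\bm b|^2$ from above for all $k$ simultaneously via $\mathbb{P}[|\bm w_k^*\bm b|^2 \ge s\|\bm w_k\|_2^2]=(1-s)^{n-1}$ with $s\asymp\ln(en)/n$, the union bound over $k$ contributing the $O(n^{-1/4})$ term; (ii) substitute this into $\sum_{k\neq\ell}|\bm x_k||E_{k\ell}||\bm x_\ell|$ together with the full inequality of Definition \ref{def:betanormal} and AM--GM to bound the numerator perturbation by $O(n^{-\beta}\ln(en))$ times $\tfrac1n\sum_k|P_k|^2\|\bm w_k\|_2^2$; and (iii) lower bound $\|\bm x\|_2^2 = \bm b^*(V^{-1})^*|P|^2V^{-1}\bm b$ by writing $\bm b=\bm g/\|\bm g\|_2$ with $\bm g\sim\compnormvec$ and applying Proposition \ref{prop:notsmall} to $\bm g^*(V^{-1})^*|P|^2V^{-1}\bm g$ with threshold $t=n^{-\beta/2}$ (the $en^{-\beta/2}$ term) and Proposition \ref{prop:chisq} to $\|\bm g\|_2^2$ (the $n^{-1/2}$ term), then a union bound over the three events. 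In that formulation the only genuine obstacle is constant bookkeeping: pinning down the thresholds in Proposition \ref{prop:notsmall} and the tail parameter $s$ so that the combined multiplicative and additive errors collapse to exactly $\varepsilon = 2n^{-\beta/2}\ln(en)$.
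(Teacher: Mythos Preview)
Your deterministic argument is correct and considerably simpler than the paper's proof. The key observation --- that the $j=k$ instance of Definition~\ref{def:betanormal} alone yields $\sum_{\ell\neq k}|\bm v_k^*\bm v_\ell|\le n^{-\beta}$ for every $k$, hence $|\bm x^*(V^*V-I)D\bm x|\le n^{-\beta}\|\bm x\|_2^2$ for \emph{every} $\bm x$ --- is sound, and the algebraic reduction to the displayed inequality is exactly right. Since $n^{-\beta}\le n^{-\beta/2}\le\varepsilon$, this gives the conclusion for every $\bm b$ (almost surely), which trivially implies the stated tail bound.

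The paper takes a genuinely probabilistic route instead. It replaces $\bm b$ by a complex Gaussian, computes the expectation and variance of the numerator and denominator perturbations separately (here the full $j,k$ range of Definition~\ref{def:betanormal} is used to bound $\|WP^*(V^*V-I)PW^*\|_F^2\le n^{-2\beta}\phi^2$ with $\phi=\sum_j|P_j|^2\|\bm w_j\|_2^2$), and then invokes sub-exponential concentration (Proposition~\ref{cor:subexp} and Corollary~\ref{cor:nonnormalsubexp}) together with anti-concentration of the denominator (Proposition~\ref{prop:notsmall}); the three terms $en^{-\beta/2}$, $2n^{-1/2}$, $4n^{-1/4}$ are exactly the failure probabilities of those three events with $t=1+\ln n$. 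Your approach bypasses all of this machinery by noticing that the $j=k$ case of the hypothesis already controls the off-diagonal row sums of $V^*V$ in a Schur-test/Gershgorin sense, so no randomness is needed and the error is $n^{-\beta}$ rather than $2n^{-\beta/2}\ln(en)$. What the paper's argument buys is robustness to a weaker hypothesis --- essentially Inequality~\eqref{ineq:beta} restricted to $j\neq k$ --- and your second paragraph correctly anticipates that structure; but as Definition~\ref{def:betanormal} is actually stated, your first argument is both cleaner and sharper.
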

\begin{proof}
    For simplicity, let $W = (V^{-1})^*$, and, without loss of generality, suppose that $|P_j| \le |P_k|$ for $j <k$. We first treat the denominator $\|VPW^*\bm b\|^2$; the numerator will be bounded in a similar manner. For the desired denominator $\|PW^*\bm b\|^2$, let $\phi := \mathbb{E}[\|PW^*\bm b\|^2]$ denote its expectation. Then, by Proposition \ref{prop:traces},
    \[\phi = \mathbb{E}[\|PW^*\bm b\|^2] = \mathrm{trace}(WP^*PW^*) = \mathrm{trace}(P^*PW^*W) = \sum_{j=1}^n |P_j|^2\|\bm w_j\|^2.\]
The expectation of $\|VPW^*\bm b\|^2$ is given by
\[\mathbb{E}[\|VPW^*\bm b\|^2] = \mathrm{trace}(WP^*V^*VPW^*) = \mathrm{trace}(V^*VPW^*WP^*) = \sum_{j=1}^n \sum_{k=1}^n \bm v_j^*\bm v_k  P_k  \bm w_k^*\bm w_j  \overline{P_j}.\]
By the ordering $|P_j| \le |P_k|$ for $j <k$ and Inequality \ref{ineq:beta} (with $j = k$),
\begin{align*} \mathbb{E}[\|VPW^*\bm b\|^2] &= \sum_{j=1}^n |P_j|^2 \|\bm w_j\|^2 + 2 \sum_{j = 1}^n \sum_{k = 1}^{j-1} \mathrm{Re}(  P_k \overline{P_j} \, \bm v_j^*\bm v_k  \, \bm w_k^*\bm w_j  )  \\
    &\geq \sum_{j=1}^n |P_j|^2 \|\bm w_j\|^2 - 2 \sum_{j = 1}^n |P_j|^2 \sum_{k = 1}^{j-1} |\bm v_j^*\bm v_k|   |\bm w_k^*\bm w_j| \\
    &\geq (1-2n^{-\beta})\phi.
    \end{align*}
Now, let $M = WP^*(V^*V - I)PW^*$, and consider the error term $\|VPW^*\bm b\|^2 - \|PW^*\bm b\|^2 = \bm{b}^* M \bm{b}$. By Proposition \ref{prop:traces}, its variance is given by
    \begin{align*}
       \|M\|_F^2 &=  \mathrm{trace}(([WP^*][V^*V - I][PW^*])^2) \\
        &= \mathrm{trace}(([V^*V - I][PW^*][WP^*])^2) \\
        &= \sum_{j=1}^n \sum_{k=1}^n \Bigg(\sum_{\ell \in [n] \backslash j} \bm v_j^*\bm v_\ell \, P_\ell  \overline{P_k} \, \bm w_\ell^*\bm w_k \Bigg) \Bigg(\sum_{m \in [n] \backslash k}  \bm v_k^*\bm v_m \, P_m \overline{P_j} \, \bm w_m^*\bm w_j  \Bigg) .
    \end{align*}
    Rearranging the index pairs $j,\ell$ and $k,m$, and applying Inequality \ref{ineq:beta}, we obtain
    \begin{align*}
\|M\|_F^2 &= \sum_{j=1}^n \sum_{k=1}^n \Bigg(\sum_{\ell =1}^{j-1} \bm v_j^*\bm v_\ell \, P_\ell  \overline{P_k} \, \bm w_\ell^*\bm w_k 
+ \bm v_\ell^*\bm v_j \, P_j  \overline{P_k} \, \bm w_j^*\bm w_k 
\Bigg) \\
&\qquad \qquad \quad \cdot \Bigg(\sum_{m =1}^{k-1}  \bm v_k^*\bm v_m \, P_m \overline{P_j} \, \bm w_m^*\bm w_j +  \bm v_m^*\bm v_k \, P_k \overline{P_j} \, \bm w_k^*\bm w_j \Bigg) \\
&\le \sum_{j=1}^n \sum_{k=1}^n |P_j|^2 |P_k|^2   \Bigg(\sum_{\ell =1}^{j-1} |\bm v_j^*\bm v_\ell| \big( |\bm w_\ell^*\bm w_k | + |\bm w_j^*\bm w_k | \big) 
\Bigg) \Bigg(\sum_{m =1}^{k-1}  |\bm v_k^*\bm v_m| \big(  | \bm w_m^*\bm w_j| +  | \bm w_k^*\bm w_j|\big) \Bigg)  \\
&\le n^{-2\beta} \sum_{j=1}^n \sum_{k=1}^n |P_j|^2 |P_k|^2 \|\bm{w}_k\|^2 \| \bm{w}_j \|^2 \\
&\le n^{-2\beta} \phi^2.
\end{align*}

Now, let $X = \|PW^*\bm b\|^2$, $Y = \|VPW^*\bm b\|^2$, and $t \ge 1$. By Proposition \ref{cor:subexp} (using the inequality $\|M\|_2 \leq \|M\|_F$) and Proposition \ref{prop:notsmall},
    \begin{align*}\prb{Y \leq (1-2n^{-\beta/2}t)X} &= \prb{X-Y \geq 2n^{-\beta/2}tX}  \\& \leq \prb{X \leq n^{-\beta/2}\phi} + \prb{X-Y \geq 2n^{-\beta}t\phi} \\
    &\leq en^{-\beta/2} + 2 \exp\left(- \tfrac{t-1}{2}\right).
    \end{align*}

    Now we turn to the numerator. Assume, without loss of generality, that $\|P\|_{2} \le 1$, and let $Z =[PV^{-1}\bm b]^*(V^*V-I)D[PV^{-1}\bm b]$ equal the difference in numerators. Its expectation is given by
    \[\mathbb{E}[Z]  = \mathrm{trace}(D PW^*WP^*(V^*V-I)) = \sum_{j=1}^n D_j P_j \sum_{k \in [n] \backslash j } \bm w_j^*\bm w_k \, \overline{P_k} \, \bm v_k^*\bm v_j.\]
    Using Inequality \ref{ineq:beta} and the property $|P_j|\le |P_k|$ for $j < k$, $|\mathbb{E}[Z]|$ can be bounded as follows:
    \[\big| \mathbb{E}[Z] \big|\leq \sum_{j=1}^n \sum_{k \in [n] \backslash j }  |P_j||P_k| |\bm w_j^*\bm w_k| \, | \bm v_k^*\bm v_j| \leq 2\sum_{j=1}^n |P_j|^2 \sum_{k=1}^{j-1}  |\bm w_j^*\bm w_k| \, | \bm v_k^*\bm v_j| \leq 2n^{-\beta}\phi.\]
    Bounding the variance is slightly more difficult, since $N := [PV^{-1}]^*(V^*V-I)D[PV^{-1}]$ is not necessarily normal. Instead, we bound  $\|N+N^*\|_F^2$ and $\|N-N^*\|_F^2$ separately and make use of Corollary \ref{cor:nonnormalsubexp}. The procedure is very similar to that of the denominator, save for the term $\max_{j,k} |D_j + \overline{D_k}| \leq 2$. We sketch the estimate for $\|N+N^*\|_F^2$ ($\|N-N^*\|_F^2$ is nearly identical):
    \begin{align*} \|N+N^*\|_F^2 &= \mathrm{trace}\left(([D^*(V^*V-I) + (V^*V-I)D]PW^*WP^*)^2\right) \\
    &= \sum_{j=1}^n \sum_{k=1}^n \Bigg(\sum_{\ell \in [n] \backslash j} (\overline{D_j} + D_\ell)\bm v_j^*\bm v_\ell \, P_\ell  \overline{P_k} \, \bm w_\ell^*\bm w_k \Bigg) \Bigg(\sum_{m \in [n] \backslash k}  (\overline{D_k} + D_m) \bm v_k^*\bm v_m \, P_m \overline{P_j} \, \bm w_m^*\bm w_j  \Bigg)\\
    &\le 4 \sum_{j=1}^n \sum_{k=1}^n \Bigg(\sum_{\ell \in [n] \backslash j} |P_\ell|  |\overline{P_k}| \, |\bm v_j^*\bm v_\ell| \,  | \bm w_\ell^*\bm w_k| \Bigg) \Bigg(\sum_{m \in [n] \backslash k} |P_m| |\overline{P_j}| \,  |\bm v_k^*\bm v_m| \,  |\bm w_m^*\bm w_j|  \Bigg) \\
    &\le 4 \sum_{j=1}^n \sum_{k=1}^n |P_j|^2 |P_k|^2   \Bigg(\sum_{\ell =1}^{j-1} |\bm v_j^*\bm v_\ell| \big( |\bm w_\ell^*\bm w_k | + |\bm w_j^*\bm w_k | \big) 
\Bigg) \Bigg(\sum_{m =1}^{k-1}  |\bm v_k^*\bm v_m| \big(  | \bm w_m^*\bm w_j| +  | \bm w_k^*\bm w_j|\big) \Bigg)  \\
    &\le 4n^{-2\beta} \sum_{j=1}^n \sum_{k=1}^n |P_j|^2 |P_k|^2 \|\bm{w}_k\|^2 \| \bm{w}_j \|^2 \\
    &\leq 4n^{-2\beta}\phi^2.
    \end{align*}
Hence, $\mathrm{Var}[Z] \leq 2n^{-2\beta}\phi^2$ and, by Corollary \ref{cor:nonnormalsubexp}, 
\[\prb{|Z| \geq 2n^{-\beta}t\phi} \le  \prb{|Z - \mathbb{E}[Z]| \geq 2n^{-\beta}(t-1)\phi} \leq 4 \exp \left(- \tfrac{t-1}{4} \right).\]

To complete the proof, we union bound the cases that the denominator is too small and the numerator too big. Let $Z_{x} = [PV^{-1}\bm b]^*D[PV^{-1}\bm b]$ and $Z_{y} = [PV^{-1}\bm b]^*V^*VD[PV^{-1}\bm b]$. Setting $t =1+ \ln n$ (note that $2n^{-\beta/2}t<1$ for $\beta >2 \ln(2\ln(e n))/\ln(n)$), we have
\begin{align*}
    \prb{ \frac{|Z_y|}{Y} \ge \frac{1}{1-2n^{-\beta/2}t} \left( \frac{|Z_x|}{X}   + \frac{2t}{n^{\beta/2}} \right)} &\le \prb{Y \le (1-2n^{-\beta/2}t)X} + \prb{|Z_x| - |Z_y| \geq 2n^{-\beta/2}t X} \\
    &\le \prb{X \leq n^{-\beta/2}\phi} + \prb{X-Y \geq 2n^{-\beta}t\phi} + \prb{|Z| \geq 2n^{-\beta}t\phi} \\
    &\le e n^{-\beta/2} + 2 \exp\left(- \tfrac{t-1}{2}\right) + 4 \exp \left(- \tfrac{t-1}{4} \right) \\
    &= e n^{-\beta/2} + 2 n^{-1/2} + 4 n^{-1/4}.
\end{align*}

\end{proof}

The above lemma immediately implies an upper bound for estimating extremal eigenvalues.

\begin{corollary}\label{cor:errsmall}
    Let $n \ge 2$ and $\beta >2 \ln(2\ln(e n))/\ln(n)$, $A \in \mathbb{C}^{n \times n}$ be diagonalizable with eigendecomposition $A = V \Lambda V^{-1}$, where $V$ is $\beta$-normal (see Definition \ref{def:betanormal}), and $\Lambda_k \in \Lambda(A)$ be an extreme point of $\mathrm{conv}(\Lambda(A))$. Then, for $\bm b \sim \mathrm{Unif}(\mathbb{S}^{n-1})$,
    \[\prb{\frac{d(\Lambda_k, W(H_{6m+1}))}{\mathrm{diam}(\mathrm{conv}(\Lambda(A)))} \geq \frac{\frac{6}{m} \ln  \Big( \frac{em \|V^{-1}\bm b\|^2_2 }{6 |[ V^{-1}\bm b]_k|^2}\Big)+  \frac{2\ln(e n)}{n^{\beta/2}}}{1 - \frac{2\ln(e n)}{n^{\beta/2}}}  }\leq e n^{-\beta/2} + 2 n^{-1/2} + 4 n^{-1/4} .\]
\end{corollary}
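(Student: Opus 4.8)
The plan is to re-run the proof of Lemma~\ref{lm:single_eig}, interposing Lemma~\ref{lem:VVSmall} exactly where the weight $V^*V$ in Equation~\ref{eqn:nonnormal_RQ} would block the normal-case argument; no new use of Proposition~\ref{prop:perturbed_basis} is needed, since the corollary keeps the random quantity $\|V^{-1}\bm b\|_2^2/|[V^{-1}\bm b]_k|^2$ explicit. First I would normalize: as $\Lambda_k$ is an extreme point of $\mathrm{conv}(\Lambda(A))$, an affine change of coordinates lets me assume $\Lambda_k=0$, $\mathrm{diam}(\mathrm{conv}(\Lambda(A)))=1$, and $\mathrm{conv}(\Lambda(A))\subset D_0$, the half-disk of Equation~\ref{eqn:halfannulus}; in particular $\|\Lambda\|_2\le 1$, so $D:=-\Lambda$ is diagonal with $\|D\|_2\le 1$. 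Writing $\bm\alpha=V^{-1}\bm b$, Equation~\ref{eqn:nonnormal_RQ} at $\lambda=0$ gives, for every $p\in\mathcal{P}_{6m}$ and $P:=p(\Lambda)$,
\[ d(\Lambda_k,W(H_{6m+1}))\ \le\ \left|\frac{[P\bm\alpha]^*V^*VD[P\bm\alpha]}{[P\bm\alpha]^*V^*V[P\bm\alpha]}\right|. \]

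Next I would take $p=\widetilde P_{m,\delta}$ from Proposition~\ref{prop:remez} with the data-dependent value $\delta=\tfrac{6}{m}\ln\!\big(\tfrac{m\|\bm\alpha\|_2^2}{6|\bm\alpha_k|^2}\big)$ (the claim is vacuous if $\delta\ge 1$) and apply Lemma~\ref{lem:VVSmall} with this $P=\widetilde P_{m,\delta}(\Lambda)$ and $D=-\Lambda$: on an event of probability at least $1-(en^{-\beta/2}+2n^{-1/2}+4n^{-1/4})$ the right-hand side above is at most $\tfrac{1}{1-\varepsilon}\big(\big|\tfrac{[P\bm\alpha]^*D[P\bm\alpha]}{[P\bm\alpha]^*[P\bm\alpha]}\big|+\varepsilon\big)$ with $\varepsilon=2n^{-\beta/2}\ln(en)$. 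What survives is the normal-matrix Rayleigh quotient $\big|\tfrac{\sum_j|\bm\alpha_j|^2|\widetilde P_{m,\delta}(\Lambda_j)|^2\Lambda_j}{\sum_j|\bm\alpha_j|^2|\widetilde P_{m,\delta}(\Lambda_j)|^2}\big|$, which I would bound for each realization of $\bm\alpha$ exactly as in Lemma~\ref{lm:single_eig}: the eigenvalues with $|\Lambda_j|\le\delta$ contribute at most $\delta$, while those with $|\Lambda_j|>\delta$ lie in $D_\delta$, so Inequality~\ref{ineq:remez} (keeping only the $j=k$ term $|\bm\alpha_k|^2|\widetilde P_{m,\delta}(0)|^2$ in the denominator, where $\widetilde P_{m,\delta}(0)=T_m(1/\cos(2\delta/3))\ge 1$) bounds their contribution by $\tfrac{\|\bm\alpha\|_2^2}{|\bm\alpha_k|^2}e^{-m\delta/6}$. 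For the chosen $\delta$ these sum to $\tfrac{6}{m}\ln\!\big(\tfrac{em\|\bm\alpha\|_2^2}{6|\bm\alpha_k|^2}\big)$; substituting $\bm\alpha=V^{-1}\bm b$ finishes the proof.

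The step I expect to need the most care is that the extremal polynomial $\widetilde P_{m,\delta}$, and hence the diagonal matrix $P=\widetilde P_{m,\delta}(\Lambda)$, depends on $\bm b$ through $\delta$, whereas Lemma~\ref{lem:VVSmall} is stated for a \emph{fixed} $P$. Since $\delta$ depends on $\bm b$ only through $(\|\bm\alpha\|_2,|\bm\alpha_k|)$ and the three bad events in the proof of Lemma~\ref{lem:VVSmall} are invariant under rescaling of $P$, I expect this is repairable, either by re-deriving the concentration estimates of Lemma~\ref{lem:VVSmall} directly for the particular data-dependent $P=\widetilde P_{m,\delta}(\Lambda)$, or by a union bound over a finite net of values of $\delta$ together with a monotonicity estimate in $\delta$, absorbing the at-most-constant loss into the constants. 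A minor additional technicality is that the estimates of Lemma~\ref{lem:VVSmall} invoke Propositions~\ref{prop:notsmall} and~\ref{cor:subexp} on matrices built from $P$; one should therefore restrict to the coordinates $j$ with $\widetilde P_{m,\delta}(\Lambda_j)\ne 0$ (which always includes $j=k$), on which the $\beta$-normal inequality~\ref{ineq:beta} continues to hold.
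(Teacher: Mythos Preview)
Your approach is exactly the paper's: the paper's proof reads, in full, ``By the proof of Lemma~\ref{lm:single_eig}, there exists a polynomial $p\in\mathcal{P}_{6m}$ such that the unweighted Rayleigh quotient is at most $\tfrac{6}{m}\ln\big(\tfrac{em\|V^{-1}\bm b\|_2^2}{6|[V^{-1}\bm b]_k|^2}\big)\,\mathrm{diam}(\mathrm{conv}(\Lambda(A)))$. Applying Lemma~\ref{lem:VVSmall} with $D=\Lambda_k I-\Lambda$ and $P=p(\Lambda)$ completes the proof.'' This is precisely your normalization, your choice $p=\widetilde P_{m,\delta}$ with the data-dependent $\delta$, and your invocation of Lemma~\ref{lem:VVSmall}.

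The concern you single out---that $P=p(\Lambda)$ depends on $\bm b$ through $\delta$, while Lemma~\ref{lem:VVSmall} is stated for a \emph{fixed} diagonal $P$---is legitimate, and the paper's proof simply does not address it: it writes ``Applying Lemma~\ref{lem:VVSmall} with \ldots $P=p(\Lambda)$'' and stops. So this is a subtlety the paper glosses over rather than something you are missing; your sketched repairs (a net over $\delta$, or re-running the concentration estimates directly for the specific random $P$) are plausible ways to close it. Your remark about restricting to coordinates where $\widetilde P_{m,\delta}(\Lambda_j)\ne 0$ is likewise more careful than the paper, which does not comment on this either.
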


\begin{proof}
By the proof of Lemma \ref{lm:single_eig}, there exists a polynomial $p \in \mathcal{P}_{6m}$ such that 
   \[\bigg|\Lambda_k - \frac{[p(\Lambda)V^{-1}\bm b]^*\Lambda[p(\Lambda)V^{-1}\bm b]}{[p(\Lambda)V^{-1}\bm b]^*[p(\Lambda)V^{-1}\bm b]}\bigg| \leq \frac{6}{m} \ln  \bigg( \frac{em \|V^{-1}\bm b\|^2_2 }{6 |[ V^{-1}\bm b]_k|^2}\bigg) \, \mathrm{diam}(\mathrm{conv}(\Lambda(A)).\]
   Applying Lemma \ref{lem:VVSmall} with $D = (\Lambda_k I - \Lambda)$ and $P = p(\Lambda)$ completes the proof.
\end{proof}

Combining the above estimate for a single extremal eigenvalue with Proposition \ref{prop:samplepolytope}, we can produce an estimate on the one-sided Hausdorff distance between $\mathrm{conv}(\Lambda(A))$ and $W(H_m)$.

\begin{theorem}\label{thm:eigenhull}
    Let $n \ge 2$ and $\beta >2 \ln(2\ln(e n))/\ln(n)$, $A \in \mathbb{C}^{n \times n}$ be diagonalizable with eigendecomposition $A = V \Lambda V^{-1}$, where $V$ is $\beta$-normal (see Definition \ref{def:betanormal}). Then, for $\bm b \sim \mathrm{Unif}(\mathbb{S}^{n-1})$ and any $0 < \gamma < \min\{\beta/2,1/4\}$,
the one-sided Hausdorff distance between $\mathrm{conv}(\Lambda(A))$ and $W(H_{6m+1})$ is bounded by
\[\frac{\tilde{d}_H(\mathrm{conv}(\Lambda(A)), W(H_{6m+1}))}{\mathrm{diam}(\mathrm{conv}(\Lambda(A)))} \leq\frac{\frac{6}{m} \ln (n^{2+\alpha} \kappa^2(V))+  \frac{2\ln(e n)}{n^{\beta/2}}}{1 - \frac{2\ln(e n)}{n^{\beta/2}}} + \frac{\pi}{n^\gamma}\tan \frac{2 \pi}{n^\gamma}\]
with probability at least
    \[1  - \frac{e^2 m}{6 n^{\alpha}} -  \frac{e}{n^{\beta/2- \gamma}} - \frac{2}{n^{1/2-\gamma}} - \frac{4}{n^{1/4-\gamma}} .\]
\end{theorem}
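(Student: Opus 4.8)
The plan is to reduce the one-sided Hausdorff distance to a union bound over extreme eigenvalues, each handled by Corollary \ref{cor:errsmall}. The obstacle to a naive approach is that $\mathrm{conv}(\Lambda(A))$ may have close to $n$ vertices, whereas Corollary \ref{cor:errsmall} fails for a single vertex with probability only $O(n^{-1/4})$; a union bound over every vertex is therefore worthless, and one must first pass to a coarse inner approximation of controlled combinatorial size. Accordingly, I would invoke Proposition \ref{prop:samplepolytope} with $N = \lceil n^\gamma/2 \rceil$ to obtain a set $S$ of $N$ points on $\partial(\mathrm{conv}(\Lambda(A)))$ with
\[ d_H\big(\mathrm{conv}(\Lambda(A)),\, \mathrm{conv}(S)\big) \le \frac{L}{2N}\tan\frac{\pi}{N}, \]
where $L$ is the perimeter of $\mathrm{conv}(\Lambda(A))$. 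Bounding $L \le \pi\,\mathrm{diam}(\mathrm{conv}(\Lambda(A)))$ (the perimeter of a planar convex body equals $\pi$ times its mean width, which is at most its diameter) and using $N \ge n^\gamma/2$ gives exactly the second term $\tfrac{\pi}{n^\gamma}\tan\tfrac{2\pi}{n^\gamma}$ of the claimed bound after dividing by $\mathrm{diam}(\mathrm{conv}(\Lambda(A)))$. Moreover each point of $S$ lies on an edge of the polytope $\mathrm{conv}(\Lambda(A))$, hence is a convex combination of at most two of its vertices; collecting these gives a fixed, $\bm b$-independent set $\mathcal{E}$ of at most $2N \le n^\gamma$ eigenvalues, each an extreme point of $\mathrm{conv}(\Lambda(A))$, with $\mathrm{conv}(S) \subseteq \mathrm{conv}(\mathcal{E})$. (If $\mathrm{conv}(\Lambda(A))$ degenerates to a point or segment, skip the sampling and apply Corollary \ref{cor:errsmall} directly to its at most two extreme eigenvalues.)

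Next I would run the probabilistic step on $\mathcal{E}$. Applying Proposition \ref{prop:perturbed_basis} with $M = V^{-1}$ and $t = em/(6n^\alpha)$, and using $\kappa(V^{-1}) = \kappa(V)$, with probability at least $1 - e^2m/(6n^\alpha)$ one has $\|V^{-1}\bm b\|_2^2 / |[V^{-1}\bm b]_k|^2 \le 6 n^{2+\alpha}\kappa^2(V)/(em)$ for every $k \in [n]$, so that the logarithm appearing in Corollary \ref{cor:errsmall} is at most $\tfrac{6}{m}\ln(n^{2+\alpha}\kappa^2(V))$ uniformly in $k$. Intersecting this with the $\le n^\gamma$ events of Corollary \ref{cor:errsmall}, one per vertex $v \in \mathcal{E}$, each failing with probability at most $en^{-\beta/2} + 2n^{-1/2} + 4n^{-1/4}$, a single union bound yields exactly the claimed success probability $1 - \tfrac{e^2m}{6n^\alpha} - \tfrac{e}{n^{\beta/2-\gamma}} - \tfrac{2}{n^{1/2-\gamma}} - \tfrac{4}{n^{1/4-\gamma}}$, and on this good event
\[ \frac{d(v, W(H_{6m+1}))}{\mathrm{diam}(\mathrm{conv}(\Lambda(A)))} \le \frac{\tfrac{6}{m}\ln(n^{2+\alpha}\kappa^2(V)) + \tfrac{2\ln(e n)}{n^{\beta/2}}}{1 - \tfrac{2\ln(e n)}{n^{\beta/2}}} \qquad \text{for every } v \in \mathcal{E}. \]

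Finally I would assemble the geometry. On the good event, Proposition \ref{prop:polytopevertices} applied to the polytope $\mathrm{conv}(\mathcal{E})$ and the convex body $W(H_{6m+1})$ upgrades the uniform vertex bound to $\tilde{d}_H(\mathrm{conv}(\mathcal{E}), W(H_{6m+1})) \le \max_{v\in\mathcal{E}} d(v, W(H_{6m+1}))$; since $\mathrm{conv}(S) \subseteq \mathrm{conv}(\mathcal{E})$ and the excess functional $\tilde{d}_H$ is monotone in its first argument and satisfies $\tilde{d}_H(X,Z) \le \tilde{d}_H(X,Y) + \tilde{d}_H(Y,Z)$, we obtain
\[ \tilde{d}_H\big(\mathrm{conv}(\Lambda(A)), W(H_{6m+1})\big) \le d_H\big(\mathrm{conv}(\Lambda(A)), \mathrm{conv}(S)\big) + \max_{v\in\mathcal{E}} d\big(v, W(H_{6m+1})\big), \]
and substituting the two displayed bounds gives the theorem. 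The one genuinely delicate point is the budgeting in the first step: the additive geometric error $\tfrac{\pi}{n^\gamma}\tan\tfrac{2\pi}{n^\gamma}$ is precisely the price of cutting the vertex count down to $n^\gamma$, and the hypothesis $\gamma < \min\{\beta/2, 1/4\}$ is exactly what forces the resulting union-bound failure probability to be $o_n(1)$; everything else is routine bookkeeping combining Corollary \ref{cor:errsmall} with Propositions \ref{prop:perturbed_basis}, \ref{prop:samplepolytope}, and \ref{prop:polytopevertices}.
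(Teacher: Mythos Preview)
Your proposal is correct and follows essentially the same approach as the paper: approximate $\mathrm{conv}(\Lambda(A))$ by a polytope on $O(n^\gamma)$ boundary points via Proposition \ref{prop:samplepolytope}, replace these by the at most $n^\gamma$ eigenvalue endpoints of the containing edges (your $\mathcal{E}$ is the paper's set $W$), apply Corollary \ref{cor:errsmall} to each and union bound, and finish with Proposition \ref{prop:perturbed_basis} to control the logarithm. One inconsequential slip: with $N=\lceil n^\gamma/2\rceil$ you get $2N\ge n^\gamma$, not $\le$, but the paper is equally informal about integrality here and the effect on the stated constants is nil.
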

\begin{proof}
    Let $\mathcal{U} = \mathrm{conv}(\Lambda(A))$, and $L = \pi\,\mathrm{diam}(\mathcal{U})$. By Proposition \ref{prop:samplepolytope}, we can pick $n^\gamma/2$ vertices $S$ to create an interior polytope $\mathcal{V} = \mathrm{conv}(S)$ approximating the convex hull with error at most $\frac{L}{n^\gamma}\tan{\frac{2\pi}{n^\gamma}}$. The optimal polytope $\mathcal{V}$ must have each element of $S$ on the boundary of $\mathcal{U}$. For each $s \in S$, let $\lambda_s,\mu_s \in \Lambda(A)$ be the endpoints of the boundary segment containing $s$. Let $W = \bigcup_{s \in S} \{\lambda_s,\mu_s\}$ and $\mathcal{W} = \mathrm{conv}(W)$. Since $\mathcal{V} \subseteq \mathcal{W} \subseteq \mathcal{U}$, $\mathcal{W}$ approximates $\mathcal{U}$ at least as well as $\mathcal{V}$. 

    By Corollary \ref{cor:errsmall} applied to the eigenvalues in $W$ and Proposition \ref{prop:polytopevertices},
 \[\prb{\frac{\tilde d_H(\mathcal{U}, W(H_{6m+1}))}{\mathrm{diam}(\mathrm{conv}(\Lambda(A)))} \geq \frac{\frac{6}{m} \ln  \Big( \frac{em \|V^{-1}\bm b\|^2_2}{6 \min_k |[ V^{-1}\bm b]_k|^2} \Big)+  \frac{2\ln(e n)}{n^{\beta/2}}}{1 - \frac{2\ln(e n)}{n^{\beta/2}}} + \frac{\pi}{n^\gamma}\tan \frac{2 \pi}{n^\gamma} }\leq \frac{e}{n^{\beta/2- \gamma}} + \frac{2}{n^{1/2-\gamma}} + \frac{4}{n^{1/4-\gamma}} .\]
By Proposition \ref{prop:perturbed_basis} with $M = V^{-1}$ and $t = e m /(6 n^{\alpha})$, we have
\[\Prob\left[ \frac{\min_{k \in [n]} |[V^{-1} \bm b]_k|^2}{\|V^{-1} \bm b\|_2} \le \frac{e m }{6 n^{2+\alpha} \kappa^2(V)} \right] \le \frac{e^2 m}{6 n^{\alpha}} ,\]
completing the proof.
\end{proof}

\section*{Acknowledgements}
The authors would like to thank Louisa Thomas for improving the style of presentation.

{ \small 
	\bibliographystyle{plain}
	\bibliography{main.bib} }

\newpage
\appendix
\section{A Remez-Type Polynomial for the Half Annulus}

Here we provide a proof of Proposition \ref{prop:remez}. Recall that
\[ D_\delta = \{ z \in \mathbb{C} \, | \, \delta \le |z|\le 1, \mathrm{Re}(z) \le 0 \} \]
and
\[R_\epsilon = \{z \in \mathbb{C} \,\big|\, |z| \leq 1, \arg{z} \in [\epsilon, \pi - \epsilon] \cup [\pi + \epsilon, 2 \pi - \epsilon]\} \cup \{z \in \mathbb{C} \,\big|\, |z| \leq 1-\epsilon/8 \}.\]

Given Proposition \ref{prop:remez_disk}, in order to prove Proposition \ref{prop:remez}, it suffices to prove that $P(0) =1$ and $P(D_\delta) \subset R_{\tfrac{2}{3} \delta}$ for $P(z) = (1-\tfrac{\delta}{4}) z^2 + (1-\tfrac{\delta}{8})z + 1$. Clearly, the former is true. The latter is proved below.
        
\begin{proposition}
Let $0<\delta<1$ and $P(z) = (1-\tfrac{\delta}{4}) z^2 + (1-\tfrac{\delta}{8})z + 1$. Then $P(D_\delta) \subset R_{\tfrac{2}{3} \delta}$.
\end{proposition}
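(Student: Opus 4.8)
The plan is to verify the set inclusion $P(D_\delta) \subset R_{\frac{2}{3}\delta}$ by splitting the half annulus $D_\delta$ into two regimes based on $|z|$ and tracking how the quadratic $P$ distorts the argument and the modulus. First I would record the basic geometry: $P$ maps $z=0$ to $1$, and since $P(z) = 1 + (1-\frac{\delta}{8})z + (1-\frac{\delta}{4})z^2$, for $z$ with small modulus we have $P(z) \approx 1 + z$ up to second-order terms, so the image stays close to the right half of the disk centered at $1$; the goal is to show the image never enters the forbidden "lens" near $z=1$ (i.e. the region $|z| > 1 - \frac{\epsilon}{8}$ with $\arg z \in (-\epsilon,\epsilon)$) where $\epsilon = \frac{2}{3}\delta$. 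The two regimes I would use are (i) $z \in D_\delta$ with $|z|$ close to $1$, where $\mathrm{Re}(z) \le 0$ forces $P(z)$ to have modulus bounded away from $1$ from below only if we are careful — actually here $\mathrm{Re}(z)\le 0$ helps kill the linear term's real part — and (ii) $z \in D_\delta$ with $|z|$ near $\delta$ (small), where the dominant behavior is $P(z)\approx 1+z$ and the constraint $|z|\ge \delta$ keeps the argument of $P(z)$ bounded away from $0$ by roughly $\delta$, hence outside the angular wedge of half-width $\frac{2}{3}\delta$.

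The key steps, in order, would be: (1) Parametrize $z = re^{i\theta}$ with $\delta \le r \le 1$ and $\theta \in [\frac{\pi}{2}, \frac{3\pi}{2}]$ (using $\mathrm{Re}(z)\le 0$), and compute $\mathrm{Re}(P(z))$ and $\mathrm{Im}(P(z))$ explicitly as functions of $r,\theta$. (2) Show that when $r$ is not too small — say $r \ge r_0$ for an appropriate threshold like $r_0 = \sqrt{\delta}$ or a fixed fraction — the coefficients $(1-\frac{\delta}{8})$ and $(1-\frac{\delta}{4})$ combined with $\mathrm{Re}(z)\le 0$ force $|P(z)| \le 1 - \frac{\epsilon}{8}$, placing $P(z)$ in the inner disk component of $R_\epsilon$; this is a modulus estimate where the point is that $|1 + aw + bw^2| $ with $\mathrm{Re}(w)\le 0$ and $|w|$ bounded below is strictly less than $1$ by a quantifiable margin. (3) For the remaining regime $\delta \le r \le r_0$, show $P(z)$ lies in the angular part of $R_\epsilon$, i.e. $\arg P(z) \in [\epsilon, \pi-\epsilon]\cup[\pi+\epsilon, 2\pi-\epsilon]$: here $P(z) = 1 + z(1-\frac{\delta}{8}) + z^2(1-\frac{\delta}{4})$, the linear term dominates the quadratic since $|z|\le r_0$ is small, and $\mathrm{Im}(P(z))$ has the same sign as $\mathrm{Im}(z)$ with $|\mathrm{Im}(P(z))| \gtrsim r|\sin\theta|$; combined with $\mathrm{Re}(P(z))$ staying near $1 + r\cos\theta(1-\frac{\delta}{8})$, one gets $|\arg P(z)| \ge c\cdot r \ge c\delta$ for a constant $c$, and checking $c \ge \frac{2}{3}$ (or adjusting $r_0$) closes this case. (4) Finally, patch the two regimes: confirm $r_0$ can be chosen so that both estimates hold on their respective pieces and that they overlap or meet, i.e. at $r = r_0$ the modulus bound of step (2) is already in force.

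The main obstacle I expect is step (3) — controlling the argument of $P(z)$ from below by (a constant times) $\delta$ uniformly over $\theta \in [\frac{\pi}{2}, \frac{3\pi}{2}]$ and $\delta \le r \le r_0$. The subtlety is that near $\theta = \frac{\pi}{2}$ or $\theta = \frac{3\pi}{2}$ (purely imaginary $z$), $\mathrm{Re}(z) = 0$ and the real part of $P(z)$ is $1 + r^2\cdot(\text{small negative})$ which stays near $1$, so $\arg P(z) \approx \mathrm{Im}(P(z)) \approx r(1-\frac{\delta}{8})$, giving exactly the $\sim r \ge \delta$ lower bound needed — this is the tight case and requires the quadratic coefficient $1-\frac{\delta}{4}$ (rather than, say, $1$) precisely so that the $z^2$ real-part contribution $-r^2\cos(2\theta)$ does not push $\mathrm{Re}(P(z))$ too far below $1$ when $\cos 2\theta < 0$. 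I would handle this by a careful case split on the sign of $\cos\theta$ and $\cos 2\theta$, bounding $\mathrm{Im}(P(z)) / |\mathrm{Re}(P(z)) - 1|$ or more robustly $\mathrm{Im}(P(z))/\mathrm{Re}(P(z))$ from below, and verifying the constant beats $\tan(\frac{2}{3}\delta) \approx \frac{2}{3}\delta$ for all $\delta \in (0,1)$. The rest (steps 1, 2, 4) is routine but tedious algebra of the sort the authors explicitly flag as "lengthy and un-insightful," so I would organize it as a sequence of elementary modulus/argument inequalities rather than a single computation.
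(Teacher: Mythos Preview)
Your overall plan --- cover $D_\delta$ by a region where the modulus bound $|P(z)|\le 1-\tfrac{\epsilon}{8}$ holds and a region where the argument bound $|\arg P(z)|\ge\epsilon$ holds --- matches the paper's, but your proposed split by $r=|z|$ alone has a genuine gap. In step~(3) you assert that for $\delta\le r\le r_0$ one can show $|\arg P(z)|\ge c\cdot r\ge c\delta$; but take $z=-r$ on the negative real axis: then $P(-r)=1-(1-\tfrac{\delta}{8})r+(1-\tfrac{\delta}{4})r^2$ is real and positive, so $\arg P(z)=0$ and the argument bound is simply false there. Your own heuristic already signals this: you write $|\mathrm{Im}(P(z))|\gtrsim r|\sin\theta|$, which vanishes at $\theta=\pi$, and the jump from this to ``$|\arg P(z)|\ge c\cdot r$'' silently drops the $|\sin\theta|$ factor. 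These points are in fact covered by the \emph{modulus} bound (one checks directly that $|P(-r)|\le 1-\tfrac{\delta}{12}$ for all $r\ge\delta$), so the correct decomposition cannot be by $r$ alone but must involve $\theta$ as well: roughly, the argument bound is needed only near the imaginary axis ($\theta$ close to $\tfrac{\pi}{2}$ or $\tfrac{3\pi}{2}$), while the modulus bound handles everything else, including small $r$ with $\theta$ near $\pi$.

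The paper sidesteps the two-parameter casework by first invoking the open mapping theorem: since $P$ is holomorphic and nonconstant, $\partial\, P(D_\delta)\subset P(\partial D_\delta)$, and because $\mathbb{C}\setminus R_{\frac{2}{3}\delta}$ is connected and unbounded it suffices to check $P(\partial D_\delta)\subset R_{\frac{2}{3}\delta}$. The boundary $\partial D_\delta$ consists of three one-parameter pieces --- the outer arc $|z|=1$, the imaginary-axis segments $z=ci$ with $\delta\le|c|\le 1$, and the inner arc $|z|=\delta$ --- which the paper handles in turn: the outer arc by the modulus bound, the imaginary segments by the argument bound, and the inner arc by a further split on $\theta$ (modulus bound when $\delta\ge\tfrac{9}{50}$ or $\cos\theta\le -\tfrac{1}{12}$, argument bound otherwise). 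This boundary reduction is the main organizational device you are missing; once you have it, the mixed $(r,\theta)$ casework collapses to a handful of one-variable inequalities and the tight case you correctly anticipate (near $z=\pm\delta i$) is isolated cleanly.
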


\begin{proof}
By the open mapping theorem, it suffices to show that $P(\partial D_\delta) \subset R_{\tfrac{2}{3} \delta}$. We do so by considering the following three portions of $\partial D_\delta$ separately:
\begin{enumerate}
\item $z = e^{i \theta}$, $\theta \in [\tfrac{\pi}{2},\tfrac{3 \pi}{2}]$,
\item $z = c i$, $\delta \le |c| \le 1$,
\item $z = \delta e^{i \theta}$, $\theta \in [\tfrac{\pi}{2},\tfrac{3 \pi}{2}]$.
\end{enumerate}
First, consider $z = e^{i \theta}$, $\theta \in [\tfrac{\pi}{2},\tfrac{3 \pi}{2}]$. We have
\begin{align*}
\left| P(e^{i \theta})\right|^2 &= \left| (1-\tfrac{\delta}{4}) e^{2i \theta} + (1-\tfrac{\delta}{8})e^{i \theta} + 1 \right|^2 \\
&= 1 - \tfrac{\delta}{4} + \tfrac{5 \delta^2}{64} + 4 (1- \tfrac{\delta}{8})^2 \cos \theta + 4 (1 - \tfrac{\delta}{4}) \cos^2 \theta.
\end{align*}
$\left| P(e^{i \theta})\right|^2$ is a convex function of $\cos \theta$ for $\theta \in [\tfrac{\pi}{2},\pi]$, and so it achieves its maximum at one of the endpoints. Therefore,
\begin{align*}
\left| P(e^{i \theta})\right|^2 &\le  1 - \tfrac{\delta}{4} + \tfrac{5 \delta^2}{64} + 4 \max \{(1 - \tfrac{\delta}{4}) - (1- \tfrac{\delta}{8})^2, 0 \} \\
&\le  1 - \tfrac{\delta}{4} + \tfrac{5 \delta^2}{64}. \\
\end{align*}
The right hand side is bounded above by $(1- \tfrac{1}{12} \delta)^2$ for $\delta \in [0,1]$, and so $P(e^{\theta i}) \subset R_{\tfrac{2}{3} \delta}$ for $\theta \in [\tfrac{\pi}{2},\tfrac{3 \pi}{2}]$. Next, consider $z = c i$ for $\delta \le |c| \le 1$. We have 
\[P(ci) = 1 - (1- \tfrac{\delta}{4})c^2 + (1- \tfrac{\delta}{8}) c i.\]
Note that the real part of $P(ci)$ is positive. Without loss of generality, let $c >0$. It suffices to show that
\[ \arg(P(ci)) = \arctan \frac{(1- \tfrac{\delta}{8}) c}{1-(1- \tfrac{\delta}{4})c^2} \ge \tfrac{2}{3} \delta,\]
or, equivalently, that
\[\tan( \tfrac{2}{3} \delta) (1- \tfrac{\delta}{4})c^2 + (1- \tfrac{\delta}{8}) c - \tan( \tfrac{2}{3} \delta) \ge 0.\]
The left hand side is increasing with respect to $c$ for $c>0$, and so
\[\tan( \tfrac{2}{3} \delta) (1- \tfrac{\delta}{4})c^2 + (1- \tfrac{\delta}{8}) c - \tan( \tfrac{2}{3} \delta) \ge \tan( \tfrac{2}{3} \delta) (1- \tfrac{\delta}{4})\delta^2 + (1- \tfrac{\delta}{8}) \delta - \tan( \tfrac{2}{3} \delta).\]
The derivative of the right hand side is bounded below by $\tfrac{3}{10}$ on $[0,1]$, and so its minimum is achieved at $\delta = 0$ and given by zero. Therefore, $P(ci) \subset R_{\tfrac{2}{3} \delta}$ for $\delta \le |c| \le 1$. Finally, consider $z = \delta e^{i \theta}$ for $\theta \in [\tfrac{\pi}{2},\tfrac{3 \pi}{2}]$. We have
\begin{align*}
\left| P(\delta e^{i \theta})\right|^2 &= \left| (1-\tfrac{\delta}{4}) \delta^2 e^{2i \theta} + (1-\tfrac{\delta}{8}) \delta e^{i \theta} + 1 \right|^2 \\
&= 1 - \delta^2 + \tfrac{1}{4} \delta^3 + \tfrac{65}{64} \delta^4 - \tfrac{1}{2} \delta^5 + \tfrac{1}{16} \delta^6 + \delta (2 - \tfrac{1}{4}\delta + 2 \delta^2 - \tfrac{3}{4} \delta^3 + \tfrac{1}{16} \delta^4) \cos \theta +\delta^2(2-\tfrac{1}{2}\delta) \cos^2 \theta
\end{align*}
and
\[\arg(P(\delta e^{i \theta})) = \arctan \frac{(1-\tfrac{\delta}{8}) \delta \sin(\theta) + (1-\tfrac{\delta}{4}) \delta^2 \sin(2\theta) }{1 + (1-\tfrac{\delta}{8}) \delta \cos(\theta) + (1-\tfrac{\delta}{4}) \delta^2 \cos(2\theta)   }.\]
We break our analysis of $z = \delta e^{i \theta}$ for $\theta \in [\tfrac{\pi}{2},\tfrac{3 \pi}{2}]$ into three cases, depending on the size of $\delta$ and value of $\theta$. Let us first consider the size of $\left| P(\delta e^{i \theta})\right|^2$. This quantity is a convex function of $\cos \theta$ for $\theta \in [\tfrac{\pi}{2},\pi]$, and so it achieves its maximum at one of the endpoints. Therefore,
\begin{align*}
\left| P(\delta e^{i \theta})\right|^2 &\le  1 - \delta^2 + \tfrac{1}{4} \delta^3 + \tfrac{65}{64} \delta^4 - \tfrac{1}{2} \delta^5 + \tfrac{1}{16} \delta^6 + \delta \max \{ -(2 - \tfrac{9}{4} \delta + \tfrac{5}{2} \delta^2 - \tfrac{3}{4} \delta^3 + \tfrac{1}{16} \delta^4),0\}\\
&= 1 - \delta^2 + \tfrac{1}{4} \delta^3 + \tfrac{65}{64} \delta^4 - \tfrac{1}{2} \delta^5 + \tfrac{1}{16} \delta^6.
\end{align*}
This quantity is strictly less than $(1 - \tfrac{1}{12} \delta)^2$ for $\delta \in [\tfrac{9}{50},1]$, so we may now restrict our attention to $\delta < \tfrac{9}{50}$. If $\cos \theta \le -\tfrac{1}{12}$, then
\begin{align*}
\left| P(\delta e^{i \theta})\right|^2 &\le  1 - \delta^2 + \tfrac{1}{4} \delta^3 + \tfrac{65}{64} \delta^4 - \tfrac{1}{2} \delta^5 + \tfrac{1}{16} \delta^6 \\ &\quad + \delta \max \{ -(2 - \tfrac{9}{4} \delta + \tfrac{5}{2} \delta^2 - \tfrac{3}{4} \delta^3 + \tfrac{1}{16} \delta^4), -\tfrac{1}{12}(2 - \tfrac{1}{4}\delta + 2 \delta^2 - \tfrac{3}{4} \delta^3 + \tfrac{1}{16} \delta^4) + \tfrac{1}{144} \delta (2 - \tfrac{1}{2} \delta)\}\\
&= 1 - \tfrac{1}{6} \delta  - \tfrac{139}{144} \delta^2 + \tfrac{23}{288} \delta^3 + \tfrac{69}{64} \delta^4 - \tfrac{97}{192} \delta^5 + \tfrac{1}{16} \delta^6,
\end{align*}
and the right hand side is bounded above by $(1- \tfrac{1}{12} \delta)^2$. All that remains is to consider the case where $\delta<\tfrac{9}{50}$ and $ - \tfrac{1}{12}< \cos \theta \le 0$. When $\delta$ and $\cos \theta$ are this small in magnitude, 
\[\mathrm{Re}(P(\delta e^{i \theta})) = 1 + (1-\tfrac{\delta}{8}) \delta \cos(\theta) + (1-\tfrac{\delta}{4}) \delta^2 \cos(2\theta) >0 .\]
Suppose, without loss of generality, that $\theta \in [ \tfrac{\pi}{2},\pi]$. We aim to show that, in this regime, $\arg(P(\delta e^{i \theta})) \ge  \tfrac{2}{3} \delta$, or, equivalently, that
\[(1-\tfrac{\delta}{8}) \delta \sin(\theta) + (1-\tfrac{\delta}{4}) \delta^2 \sin(2\theta) \ge \left(1 + (1-\tfrac{\delta}{8}) \delta \cos(\theta) + (1-\tfrac{\delta}{4}) \delta^2 \cos(2\theta) \right) \tan (\tfrac{2}{3} \delta).\]
We note that $\cos \theta$ and $\cos(2 \theta)$ are negative, $\sin \theta >\tfrac{99}{100}$, and $\sin (2 \theta)> - \tfrac{1}{5}$ for $\theta \in[\tfrac{\pi}{2},\arccos (-\tfrac{1}{12})]$. Combining these bounds with the inequality $\tan(x) \le x + \tfrac{1}{2} x^3$ for $0\le x\le 1/2$, it suffices to show that
\[\tfrac{99}{100}(1-\tfrac{\delta}{8}) \delta  - \tfrac{1}{5} (1-\tfrac{\delta}{4}) \delta^2  -(\tfrac{2}{3} \delta + \tfrac{4}{27} \delta^3) = \delta (\tfrac{97}{300} - \tfrac{259}{800} \delta - \tfrac{53}{540} \delta^2) \ge 0.\]
By inspection, this is indeed the case for $\delta<\tfrac{9}{50}$. Therefore,  $P(\delta e^{\theta i}) \subset R_{\tfrac{2}{3} \delta}$ for $\theta \in [\tfrac{\pi}{2},\tfrac{3 \pi}{2}]$. This completes the analysis of the third and final portion of $\partial D_\delta$, and so $P(\partial D_\delta) \subset R_{\tfrac{2}{3} \delta}$, completing the proof.
\end{proof}

\end{document}